\newcommand{\mycolor}{Navy}
\newtheorem{theorem}{Theorem}
\newtheorem{lemma}{Lemma}
\newtheorem{definition}{Definition}
\newtheorem{proposition}{Proposition}
\newtheorem{corollary}{Corollary}
\newcommand{\al}{\alpha}
\newcommand{\e}{\varepsilon}
\newcommand{\om}{\omega}
\newcommand{\f}{\varphi}
\newcommand{\s}{\sigma}
\newcommand{\G}{\Gamma}
\newcommand{\Om}{\Omega}
\newcommand{\gk}{g_{\xi}}
\newcommand{\pa}{\partial}
\newcommand{\la}{\lambda}
\newcommand{\lbr}{\lbrace}
\newcommand{\rbr}{\rbrace}
\newcommand{\C}{{\mathbb{C}}}
\newcommand{\N}{{\mathbb{N}}}
\newcommand{\R}{{\mathbb{R}}}
\begin{document}
\begin{center}
{\Large \bf  A viscosity approach to the Dirichlet problem for degenerate complex Hessian type equations}
\bigskip\bigskip

\centerline{S\l awomir Dinew,  Hoang-Son Do and Tat Dat T\^o\footnote{The first author was supported by the NCN grant 2013/08/A/ST1/00312. The second author was supported in part by the Vietnam National Foundation for Science and Technology Development (NAFOSTED) under grant number 101.02-2017.306. The third author was supported by the CFM foundation. }}
\end{center}

\begin{abstract}
A viscosity approach is introduced for the Dirichlet problem associated to complex Hessian type equations  on domains in $\C^n$. The arguments are  modelled on the theory of viscosity solutions for real Hessian type equations developed by Trudinger \cite{Tr90}.  As consequence we solve the Dirichlet problem for the Hessian quotient and special Lagrangian equations. We also establish basic regularity results for the solutions. 
\end{abstract}
\tableofcontents

\section{Introduction}
Partial differential equations play pivotal role in modern complex geometric analysis. Their applications typically involve a geometric problem which can be reduced to the solvability of an associated equation. This solvability can be deducted by various methods yet most of the basic approaches exploit a priori estimates for suitably defined weak solutions. Thus although geometers work in the smooth category, the associated weak theory plays an important role.

One of the most successful such theories is the  pluripotential theory associated to the complex Monge-Amp\`ere eqution developed by Bedford and Taylor \cite{BT1,BT2}, Ko\l odziej \cite{K98}, Guedj and Zeriahi \cite{GZ05} and many others. Roughly speaking pluripotential theory allows to define $(i\partial\bar{\partial}u)^k$ as a measure valued positive closed differential form (i.e. a closed positive current) for any locally bounded plurisubharmonic function which in turn allows to deal with non smooth weak solutions of Monge-Amp\`ere equations. Unfortunately the pluripotential approach is  applicable only for a limited class of nonlinear operators, such as the $m$-Hessian equations- see \cite{DK, Lu}. 

Some of the most important examples on nonlinear operators for which pluripotential tools do not seem to apply directly are the complex Hessian quotient operators. These are not only interesting for themselves but also appear in interesting geometrical problems. One such example is the  Donaldson equation that we describe below.

Given a compact K\"ahler manifold $(X,\om)$ equipped with another K\"ahler form $\chi$ one seeks another K\"ahler form $\tilde{\chi}$ cohomologous to $\chi$ such that
\begin{equation}\label{Donaldson}
\om\wedge\tilde{\chi}^{n-1}=c\tilde{\chi}^n
\end{equation}
with the constant $c$ dependent only on the cohomology classes of $\chi$ and $\om$.

In \cite{D} Donaldson  introduced this  equation in order to study the properness of the Mabuchi functional. Its parabolic version known as the $J$-flow was introduced indepenttly  by Donaldson \cite{D}  and Chen \cite{Ch} and investigated afterwards  by Song and Weinkove \cite{W1,W2},\cite{SW}. It is known that the equation (\ref{Donaldson}) is not always solvable (\cite{SW}) and a conjecture of Lajmi and Sz\'ekelyhidi \cite{LSz} predicts that its solvability  is linked to positivity of certain integrals.  It was proved that, in general, these positivity conditions are equivalent to the existence  of $C$-subsolution of Sz\'ekelyhidi \cite{Sze1}, and also the existence of parabolic $C$-subsolution  for the corresponding flows (cf. \cite{PT}).  It would be helpful to study the boundary case when we only have  nonnegativity conditions (see \cite{FLSW} for Donaldson equation on surfaces). For its resolution it seems crucial to develop the associated theory of weak solutions for the given Hessian quotient equation. A major problem in applying some version of pluripotential theory for this equation is that essentially one has to define the quotient of two measure valued operators. 

In order to circumvent this difficulty one can look for possibly different theory of weak solutions. One such approach, known as the viscosity method was invented long ago in the real setting \cite{CIL}, but was only recently introduced for complex Monge-Amp\`ere equations by Eyssidieux-Guedj-Zeriahi \cite{EGZ}, Wang \cite{Wa12} and Harvey-Lawson \cite{HL}.

In the current note we initiate the viscosity theory for general complex nonlinear elliptic PDEs. As the manifold case is much harder we focus only on the local theory i.e. we deal with functions defined over domains in $\C^n$. Nevertheless we wish to point out that nonlinear PDEs appear also in geometric problems which are defined over domains in $\C^n$- see for example \cite{CPW}, where a Dirichlet problem for the special Lagrangian type equation is studied. We also illustrate in Section \ref{lagrangian_section} that  our method can be applied to solve the  Dirichlet problem for the special degenerate Lagrangian type equation.

In our investigations we heavily rely on the corresponding real theory developed by Trudinger in \cite{Tr90}. Some of our results can be seen as complex analogues of the real results that can be found there. In particular we have focused on various comparison principles in Section 3, which we  use later on  to study existence, uniqueness and regularity of the associated Dirichlet problems. One of our main result is the sharp regularity for viscosity solutions to the Dirichlet problem for a very general class of operators including Hessian quotient type equations.

Another interesting topic is the comparison between viscosity and pluripotential theory whenever the latter can be reasonably defined. A guiding principle for us is the basic observation made by Eyssidieux, Guedj and Zeriahi \cite{EGZ} that plurisubharmonic functions correspond to viscosity subsolutions to the complex Monge-Amp\`ere equation.  We prove several analogous results for general complex nonlinear operators. It has to be stressed that the notion of a supersolution, which does not appear in pluripotential theory, is a very subtle one for nonlinear elliptic PDEs and several alternative definitions are possible. We in particular compare these and introduce a notion of supersolution that unifies the previously known approaches.

A large part of the note is devoted to complex Hessian quotient equations in domains in $\C^n$. One of our goals in this case was to initiate the construction of the undeveloped  pluripotential theory associated to such equations. We rely on connections with the corresponding viscosity theory. Our findings yield in particular that the natural domain of definition of these operators is {\it strictly smaller} than what standard pluripotential theory would predict (see Section 5 for the details). We guess that this observation, rather obvious in the case of smooth functions, will play an important role in the resolution of the issue caused by the division of measures.

The note is organized as follows: in the next section we collect the basic notions from linear algebra, viscosity and pluripotential theory. Then we investigate the various notions of supersolutions in \cite{EGZ} and \cite{Lu} and compare them with the complex analogue of Trudinger's supersolutions. Section 3 is devoted to the proof of a very general comparison principle. Then in Section 4 we restrict our attention to operators depending on the eigenvalues of the complex Hessian matrix of the unknown function. We show existence and uniqueness of viscosity solutions under fairly mild conditions. One subsection is devoted to the regularity of these weak solutions. Using classical methods due to Walsh \cite{Wa} (see also \cite{BT1}) we show the optimal H\"older regularity for sufficiently regular data. Secton 5 is devoted to comparisons between viscosity and pluripotential subsolutions and supersolutions. Finally in Section \ref{lagrangian_section}, we solve the  Dirichlet problem for the Lagrangian phase operator. 

\medskip
\noindent
{\bf Acknowledgements}. We are grateful to   Vincent Guedj, Duong H. Phong and Ahmed Zeriahi for useful discussions. This work  has been partially written during the first-named author stay at the Institut de Mathematiques de Toulouse granted by prix Szolem Mandelbrojt and the third author visits to the Department of Mathematics of Columbia University and the Institute of  Mathematics of Jagiellonian University funded by ATUPS and EDMITT travel grants. We  would like to thank all these institutions for their hospitality. 
\section{Preliminaries}
In this section we collect the notation and the basic results and definitions that will be used throughout the note.
\subsection{Linear algebra toolkit}
We begin by introducing the notion of an admissible cone that will be used throughout the note:
\begin{definition}\label{admissiblecone}
A cone $\G$ in $\mathbb R^n$ with vertex at the origin is called admissible if:
\begin{enumerate}
\item $\G$ is open and convex, $\G\neq\R^n$;
\item $\G$ is symmetric i.e. if $x=(x_1,\cdots,x_n)\in\G$ then for any permutation of indices $i=(i_1,\cdots,i_n)$ the vector $(x_{i_1},\cdots,x_{i_n})$ also belongs to $\G$;
\item $\G_n\subset\G $, where $\G_n:=\lbr x\in\mathbb R^n|\ x_i> 0,\ i\in 1,\cdots, n\rbr$. 
\end{enumerate}
\end{definition}
From the very definition it follows that  $\G_n$ is an admissible cone. Other examples involve the  $\G_k$ cones that we describe below:

Consider the $m$-th elementary symmetric polynomial defined by
$$
\sigma_m (x)=\sum _{1\leq j_1 < ... < j_m \leq n }x _{j_1}x _{j_2} 
... x _{j_m}.
$$
We shall use also the normalized version
$$
S_m(x):= \binom{n}{m}^{-1} \sigma_{m}.
$$

\begin{definition}
For any $m=1,\ldots n$, the  positive cone $\Gamma_m$ of vectors $x=(x_1,\cdots,x_n)\in\mathbb R^n$ is defined by
\begin{equation}\label{ga}
\Gamma_m=\lbrace x\in\mathbb R^n|\ \sigma_1(x)> 0,\ \cdots,\
\sigma_m(x)> 0\rbrace.
\end{equation}
\end{definition}
It is obvious that these cones are open and symmetric with respect to a permutation of the $x_i$'s. It is a nontrivial but classical fact that $\G_m$ is also convex.

Exploiting the symmetry of $\G$ it is possible to discuss $\G$ positivity for Hermitian matrices:
\begin{definition} A Hermitian $n\times n$ matrix $A$ is called $\G$ positive (respectively $\G$-semi positive) if the vector of eigenvalues $\la(A):=(\la_1(A),\cdots,\la_n(A))$ belongs to $\G$ (resp. to the Euclidean closure $\bar{\G}$ of $\G$). The definition is independent of the ordering of the eigenvalues.
\end{definition}

Finally one can define, following \cite{Li}, the notion of $\G$-admissible and $\G$-subharmonic functions through the following definitions:
\begin{definition} A $C^2$ function $u$ defined on a domain $\Om\subset\C$ is called $\G$-admissible if for any $z\in\Om$ the complex Hessian $Hu(z):=[\frac{\partial^2}{\partial z_j\partial\bar{z}_k}]_{j,k=1}^n$ is $\G$-positive.
\end{definition}

  In particular, if $\Gamma$ is an admissible cone, then  $\Gamma\subset \Gamma_1$ (see \cite{CNS}), hence we have the following corollary:

\begin{corollary} Any $\G$-admissible function is subharmonic.
\end{corollary}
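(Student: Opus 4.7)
The argument is essentially one line given the inclusion $\Gamma\subset\Gamma_1$ noted just above the statement. The plan is as follows: fix $z\in\Om$; since $u$ is $\G$-admissible, $\la(Hu(z))\in\G$, and by the cited inclusion $\la(Hu(z))\in\G_1$, so by the very definition \eqref{ga} of $\G_1$ we have $\sigma_1(\la(Hu(z)))>0$. For a Hermitian matrix, $\sigma_1$ applied to the eigenvalue vector is exactly its trace, and
$\operatorname{tr} Hu(z)=\sum_{j=1}^{n} u_{z_j\bar z_j}(z)=\tfrac14 \D u(z)$.
Hence $\D u>0$ pointwise on $\Om$, and since $u\in C^2$ this immediately implies that $u$ is subharmonic (in fact strictly so).

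Nothing in this scheme is subtle: the conclusion is merely the ``$\sigma_1$-slice'' of $\G$-admissibility, together with the identification of $\sigma_1\circ\la$ on Hermitian matrices with the trace, which in the complex setting is a positive multiple of the usual Laplacian. The only non-trivial input is the inclusion $\G\subset\G_1$, which the authors import from \cite{CNS}. If one wishes to prove it from scratch — and this is the only step that could reasonably be called an obstacle — the natural route exploits the three axioms of Definition \ref{admissiblecone}: given $x\in\G$, symmetry places every permutation of $x$ inside $\G$, and convexity then places the average $(\bar x,\dots,\bar x)$ with $\bar x=\sigma_1(x)/n$ inside $\G$; using the cone property together with $\G_n\subset\G$ and $\G\neq\R^n$, the case $\bar x\le 0$ is ruled out (it would force the diagonal line, and by convex combination with $\G_n$ all of $\R^n$, to lie in $\G$), so $\sigma_1(x)>0$ as required.
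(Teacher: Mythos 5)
Your proof is correct and is exactly what the paper intends: the corollary is an immediate consequence of the inclusion $\G\subset\G_1$ together with the identification of $\sigma_1\circ\la$ on Hermitian matrices with the trace, which in the complex setting is $\tfrac14\D$. Your optional from-scratch derivation of $\G\subset\G_1$ from the three axioms of Definition \ref{admissiblecone} (symmetry, convexity, the cone property plus $\G_n\subset\G\neq\R^n$) is also sound, though the paper simply imports this inclusion from \cite{CNS}.
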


\begin{definition} An upper semicontinuous function $v$ defined on a domain $\Om\subset\C^n$ is called $\G$-subharmonic if near any $z\in\Om$ it can be written as a decreasing limit of local $\G$-admissible functions.
\end{definition}

We refer to \cite{HL} for a detailed discussion and potential theoretic properties of general $\G$-subharmonic functions.
\subsection{Viscosity sub(super)-solutions}
Let $\Omega$ be a bounded domain in $\C^n$. Consider the following equation:
\begin{equation}\label{general eq}
F[u]:= F(x,u, D u,Hu)=0, \quad {\rm on}\, \,\Omega,
\end{equation} 
where $Du=(\partial_{z_1} u,\ldots ,\partial_{z_n} u )$, $Hu=(u_{j\bar k})$ is the Hessian matrix of $u$ and $F$ is continuous on  $\Omega\times \R\times \C^n\times  \mathcal{H}^n$.
The operator $F$ is called {\sl degenerate elliptic} at a point $(z,s,p,M)$ if
\begin{equation}
F(z,s,p,M+N)\geq F(z,s,p,M) \quad \text{for all}\quad N\geq 0, N\in \mathcal{H}^n,
\end{equation}
where $\mathcal{H}^n$ is the set of Hermitian matrices of size $n\times n$.  We remark that in our case $F(z,s,p,M)$ is not neccesarily degenerate elliptic everywhere on $\Omega\times \R\times \C^n\times  \mathcal{H}^n$. Motivated by the paper of Trudinger \cite{Tr90}  we pose the following definition:
\begin{definition}\label{def}
A function $u\in L^\infty(\Omega)$  is a viscosity subsolution of (\ref{general eq}) if it is upper semi-continuous in $\Omega$ and for any $z_0\in \Omega$, and any $C^2$ smooth function $q$ defined in some neighbourhood of $z_0$ and satisfying $u\leq q$, $u(z_0)=q (z_0)$, the inequality 
\begin{equation}F[q](z_0)\geq 0
\end{equation}
holds.  We also say that $F[u]\geq 0$ in the viscosity sense and $q$ is an upper (differential) test for $u$ at $z_0$.

\medskip
A  function $v\in L^\infty (\Omega)$ is a viscosity supersolution of equation (\ref{general eq}) if it is lower semi-continuous and there are no points $z_0\in \Omega$ and functions  $C^2$ smooth functions defined locally around $z_0$, such that $v\geq q$ in $\Omega$, $v(z_0)=q(z_0)$ and 
\begin{equation}
\inf_{N\geq 0} F(z_0, q(z_0), D q (z_0), N+ H \psi (z_0) )>0.
\end{equation}
We also say that $F[u]\leq 0$ in the viscosity sense and $q$ is a lower (differential) test for $u$ at $z_0$.
\end{definition}

For fixed $(z,s,p)\in\Om\times\mathbb R\times \mathbb C^n$ the set of all Hermitian matrices $M$, such that $F$ is degenerate elliptic at $(z,s,p,M)$ is called the {\it ellipticity set} $\mathcal A(z,s,p)$ for the data $(z,s,p)$. Note that the ellipticity set has the property that
$$\mathcal A(z,s,p)+\G_n\subset\mathcal A(z,s,p),$$
but it may not be a cone. Throughout the note we shall however focus on the situation when the ellipticity set is a cone which is  moreover constant for all the possible data sets. We then define the {\it ellipticity cone} associated to the operator $F$ which is modelled on the notion of a subequation coined by Harvey and Lawson in \cite{HL} :
\begin{definition} An operator $F(z,s,p,M)$ has an ellipticity cone $\G$ if for any $M$ in the ellipticity set the vector $\la(M)$ of the eigenvalues of $M$ belongs to the closure $\bar{\G}$ of $\G$. Furthermore $\G$ is the minimal cone with such properties.
\end{definition}
Throughout the note we consider only the situation when $\G$  is an admissible cone in the sense of Definition \ref{admissiblecone}. We shall make also the following additional assumption (compare with the Condition \ref{condition2} in Subsection 4.1):
\begin{equation}\label{aaaa}
\forall \la\in\partial\G,\ \forall (z,\ s,\ p)\in\Omega\times\R\times\C^n\ \  F(z,s,p,\la)\leq0.
\end{equation}
This condition arises naturally whenever one seeks solutions to 
$$F(z,u(z), Du(z), Hu(z))=0$$
 with pointwise Hessian eigenvalues in $\G$ (recall that $F$ increases in the $\G_n$ directions).

It is evident that in Definition \ref{def} the notion of a supersolution is different and substantially more difficult that the notion of a subsolution. The reason for this is that there is no analog for the role of the positive cone $\G_n$ from the case of subsolutions in the supersolutions' case. As an illustration we recall that  while any plurisubharmonic function is a subsolution for $F(u):=\det(H(u))=0$ (see \cite{EGZ}) it is far from being true that all supersolutions can be written as the negative of a plurisubharmonic function.

\medskip
Below we also give another  notion of a supersolution that was coined in \cite{EGZ} for the Monge-Amp\`ere equation (see also \cite{Lu} for the case of $m$-Hessian operator). It can be generalized for all operators admitting an elliptic admissible cone:
\begin{definition}\label{def2} A lower semicontinuous function $u$ is said to be a supersolution for the operator $F(z,s,p,M)$ with the associated ellipticity cone $\Gamma$ iff for any $z_0\in\Om$ and every lower differential test $q$ at $z_0$ for which $\lambda( Hq(z_0))\in \bar{\Gamma}$ one has
$$F(z,q(z_0),Dq(z_0),Hq(z_0))\leq 0.$$
\end{definition}
Note that in the definition we limit the differential tests only to those for which  $\lambda(Hq(z_0))\in \bar{\Gamma}$ .

The next proposition shows that under the assumption (\ref{aaaa}) the definition above coincides with the one from Definition \ref{def}.
\begin{proposition} Suppose that the operator $F(z,s,p,M)$ satisfies (\ref{aaaa}). Then a lower semicontinous function $u$ defined on a domain $\Om$ is a supersolution for $ $F(z,s,p,M)$=0$ in the sense of Definition \ref{def2} if and only if it is a supersolution in the sense of Definition \ref{def}.
\end{proposition}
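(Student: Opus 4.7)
The plan is to establish the equivalence by separating the cases where the Hessian of a lower test does or does not land in the ellipticity region $\{M:\lambda(M)\in\bar\Gamma\}$: in the former case each definition reduces to the same inequality, while in the latter case (relevant only for Definition \ref{def}) I will add a multiple of the identity to push the spectrum onto $\partial\Gamma$ so that hypothesis (\ref{aaaa}) can be invoked.

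For the direction Definition \ref{def} $\Rightarrow$ Definition \ref{def2}, I will take a lower test $q$ at $z_0$ with $\lambda(Hq(z_0))\in\bar\Gamma$. The first step is to note that $Hq(z_0)$ then belongs to the ellipticity set $\mathcal{A}(z_0,q(z_0),Dq(z_0))$, and that this set is stable under addition of arbitrary $N\geq 0$: this combines the assumption $\mathcal{A}+\Gamma_n\subset\mathcal{A}$ with the closedness of $\mathcal{A}$ that follows from continuity of $F$. Degenerate ellipticity along the ray then gives
\begin{equation*}
F\bigl(z_0,q(z_0),Dq(z_0),Hq(z_0)+N\bigr)\geq F\bigl(z_0,q(z_0),Dq(z_0),Hq(z_0)\bigr)\quad\text{for all }N\geq 0,
\end{equation*}
so the infimum appearing in Definition \ref{def} is attained at $N=0$. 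The hypothesis that this infimum is not strictly positive immediately yields $F(z_0,q(z_0),Dq(z_0),Hq(z_0))\leq 0$, which is the conclusion of Definition \ref{def2}.

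For the converse, I will consider a lower test $q$ at $z_0$ and split according to whether $\lambda(Hq(z_0))\in\bar\Gamma$. If it is, Definition \ref{def2} directly gives $F(z_0,q(z_0),Dq(z_0),Hq(z_0))\leq 0$, and plugging $N=0$ into the infimum in Definition \ref{def} settles that case. If instead $\lambda(Hq(z_0))\notin\bar\Gamma$, I will run a one-parameter line search along the diagonal direction by setting
\begin{equation*}
t_0:=\inf\bigl\{t\geq 0 : \lambda\bigl(Hq(z_0)+t I\bigr)\in\bar\Gamma\bigr\}.
\end{equation*}
Since $(1,\ldots,1)\in\Gamma_n\subset\Gamma$, the spectrum $\lambda(Hq(z_0))+t(1,\ldots,1)$ lies in $\Gamma$ for $t$ large, so $t_0<\infty$; since $\lambda(Hq(z_0))\notin\bar\Gamma$ and $\bar\Gamma$ is closed, $t_0>0$ and $\lambda(Hq(z_0)+t_0 I)\in\partial\Gamma$. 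Taking $N=t_0 I\geq 0$ and invoking hypothesis (\ref{aaaa}) gives $F(z_0,q(z_0),Dq(z_0),Hq(z_0)+t_0 I)\leq 0$, so the infimum in Definition \ref{def} is again $\leq 0$, and the supersolution property of Definition \ref{def} is verified.

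The step I expect to be the main obstacle is justifying, in the first implication, that degenerate ellipticity applies along the full ray $\{Hq(z_0)+N:N\geq 0\}$ rather than merely for $N$ strictly positive definite. This rests on the closedness of the ellipticity set (continuity of $F$) and the fact that $\bar\Gamma$ is closed under addition of vectors in $\bar\Gamma_n$, which in turn follows from $\Gamma$ being an admissible cone containing $\Gamma_n$. The converse implication is conceptually softer, the only new ingredient being the elementary construction of $t_0$ combined with the boundary hypothesis (\ref{aaaa}).
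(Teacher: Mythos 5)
Your proof is correct and follows the same two-case strategy as the paper's: for the direction from Definition \ref{def} to Definition \ref{def2} you use monotonicity of $F$ along $N\geq 0$ at an elliptic point, and for the converse you split on whether $\lambda(Hq(z_0))\in\bar\Gamma$, pushing the off-cone case onto $\partial\Gamma$ by adding a suitable multiple of the identity so that hypothesis (\ref{aaaa}) applies. The ``main obstacle'' you flag is in fact a non-issue: the paper's definition of degenerate ellipticity at a single matrix $M$ already asserts $F(z,s,p,M+N)\geq F(z,s,p,M)$ for every Hermitian $N\geq 0$, so monotonicity along the whole ray is immediate and no separate stability of the ellipticity set under addition of positive semidefinite matrices is required.
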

\begin{proof} Suppose first that $u$ is a supersolution in the sense of Definition \ref{def2}. Fix any $z_0$ in $\Om$ and $q$ a lower differential test for $u$ at $z_0$. If $ \lambda( Hq(z_0))\in \G$ then $$F(z,q(z_0),Dq(z_0),Hq(z_0))\leq 0,$$ hence taking $N=0$ in Definition \ref{def} we see that the condition is fulfilled. If $\lambda (Hq(z_0))$ fails to be in $\G$ then there is a positive definite matrix $N$ and a positive number $t$ such that $\lambda( Hq(z_0)+tN)\in\pa \G$. But this implies that $F(z,q(z_0),Dq(z_0),Hq(z_0)+tN)\leq 0$ which fulfills the condition in Definition \ref{def} again.

Suppose now that  $u$ is a supersolution in the sense of Definition \ref{def}. Again choose  $z_0$ in $\Om$ and $q$ a lower differential test for $u$ at $z_0$. We can assume that $\lambda(Hq(z_0))$ is in $\overline{\G}$, for otherwise such a differential test cannot be apllied in Definition \ref{def2}. But then by ellipticity
$$ F(z,q(z_0),Dq(z_0),Hq(z_0))\leq  F(z,q(z_0),Dq(z_0),Hq(z_0)+N), \,\forall  N\geq 0, N\in \mathcal{H}^n.$$
The infimum over $N$ for the right hand side is non positive by definition which implies
$$F(z,q(z_0),Dq(z_0),Hq(z_0))\leq 0$$
which was to be proved.
\end{proof}

\subsection{Aleksandrov-Bakelman-Pucci maximum principle}

%
%
In this section, we recall a variant of Aleksandrov-Bakelman-Pucci (ABP) maximum principle following \cite{Jen}. We first recall the following definition (cf. \cite{Jen}): 
\begin{definition}
Let $\Omega$ be a bounded domain in $\R^n$ centered at the origion, and $u\in C(\overline \Omega)$. We define 
$$E_\delta=\{x\in \Omega| \text{ for some } p\in \overline{B(0,\delta)}, u(z)\leq u(x)+p.(z-x),\forall z\in \Omega \}.$$
\end{definition}

\noindent
Then we have the following lemma due to Jensen \cite{Jen} which will be used in the proof of Lemma \ref{general_comparision}. Recall that a function $u$ is said to be semi-convex if $u+k|z|^2$ is convex for a sufficiently large constant $k$.
\begin{lemma}
\label{ABP}
Let $u\in C(\overline \Omega)$ be semi-convex for some constant $k>0$. If $u$ has an interior maximum and $\sup_\Omega u -\sup_{\partial\Omega } u=\delta_0 d  >0 $, where $d=diam(\Omega)$. Then there is a constant $C=C(n,k)>0$ such that 
\begin{equation}
|E_\delta |\geq C\delta^n, \quad \text{for all }\delta \in (0,\delta_0).
\end{equation}
\end{lemma}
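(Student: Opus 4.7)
The plan is to mimic the classical Aleksandrov--Bakelman--Pucci argument in two stages. \textbf{(i)} We exhibit every slope $p\in B(0,\delta)$ as an upper supporting slope for $u$ at some interior point of $\Om$, producing a surjection from $\overline{B(0,\delta)}$ onto a subset of $E_\delta$. \textbf{(ii)} We then use the semi-convexity of $u$, together with Alexandrov's twice-differentiability theorem and a change-of-variables inequality, to bound $|E_\delta|$ from below by $C\delta^n$.

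For stage \textbf{(i)}, fix $\delta\in(0,\delta_0)$ and $p\in\overline{B(0,\delta)}$ and consider $u_p(z):=u(z)-p\cdot z$. Let $x_0\in\Om$ realize the interior maximum of $u$. For every $x\in\pa\Om$, the estimate $|p\cdot(x_0-x)|\leq\delta d$ combined with the gap hypothesis gives
\[
u_p(x_0)-u_p(x)\;=\;\bigl(u(x_0)-u(x)\bigr)+p\cdot(x-x_0)\;\geq\;\delta_0 d-\delta d\;>\;0.
\]
Hence $u_p$ attains its supremum over $\overline\Om$ at some interior point $x^*_p\in\Om$, and by maximality $u(z)\leq u(x^*_p)+p\cdot(z-x^*_p)$ for every $z\in\Om$; that is, $x^*_p\in E_\delta$.

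For stage \textbf{(ii)}, since $u+k|z|^2$ is convex, Alexandrov's theorem guarantees that $u$ is twice differentiable at almost every point of $\Om$. At such a point $x\in E_\delta$ the supporting inequality forces $p=Du(x)$ and $D^2 u(x)\leq 0$; combined with $D^2 u(x)\geq -2kI$ coming from semi-convexity, this yields the uniform bound $|\det D^2 u(x)|\leq (2k)^n$ throughout $E_\delta$. Since stage \textbf{(i)} shows that $B(0,\delta)\subset Du(E_\delta)$ up to a set of measure zero, the area formula produces
\[
|B(0,\delta)|\;\leq\;\int_{E_\delta}|\det D^2 u(x)|\,dx\;\leq\;(2k)^n|E_\delta|,
\]
and the required inequality $|E_\delta|\geq C(n,k)\delta^n$ follows at once.

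The main technical subtlety will be rigorously applying the area formula when $Du$ is only of bounded variation rather than Lipschitz. Following Jensen, the standard remedy is to approximate $u$ by its sup-convolutions $u^\e(z):=\sup_{w}\bigl(u(w)-|z-w|^2/(2\e)\bigr)$; these remain semi-convex with the same constant $k$, are additionally of class $C^{1,1}$, and decrease to $u$ as $\e\to 0$. One proves the measure estimate for $u^\e$ by the classical smooth change-of-variables, and then passes to the limit $\e\to 0$ after checking that the corresponding contact sets converge to $E_\delta$ in an appropriate Lebesgue-measure sense.
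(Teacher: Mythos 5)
Your proof is correct and follows essentially the same strategy as the paper's: you establish $B(0,\delta)\subset Du(E_\delta)$ by showing that each shifted function $u-p\cdot z$ attains an interior maximum (which is exactly the paper's hyperplane argument), and you then combine the eigenvalue bound coming from semi-convexity and $D^2u\le 0$ on $E_\delta$ with a change-of-variables estimate. The only presentational difference is that the paper regularizes to the $C^2$ case at the outset (following Jensen) before running the volume comparison, whereas you invoke Alexandrov's theorem directly and then fall back on sup-convolution approximation to justify the area formula --- so both proofs resolve the nonsmoothness issue by the same device.
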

\begin{proof} 
As in Jensen \cite{Jen}, by regularization,  we can reduce to the case when $u\in C^2(\Omega)$.  Now, suppose  that $u$ has an interior maximum at $x_0$ and  
$$\delta_0= \frac{\sup_\Omega u -\sup_{\partial \Omega} u}{d}=\frac{u(x_0) -\sup_{\partial \Omega}u }{d},$$
where $d=diam(\Omega)$. 

\medskip
 We now prove that for $\delta< \delta_0$  we have $ B(0,\delta)\subset Du(E_\delta)  $. Indeed, for any $p\in B(0,\delta)$, consider the hyperplane $\ell_p(x)=h+\langle p, x \rangle$ where $h=\sup_{y\in \Omega}( u(y)-\langle  p,y \rangle)$. Then we have $u(x)\leq \ell_p(x)$ on $\Omega$ and $ u(x_1)=\ell_p(x_1)$ for some  $x_1\in \overline{\Omega}$. If we can  prove that $x_1\in \Omega$, then $Du(x_1)= p$, so $ B(0,\delta)\subset Du(E_\delta)  $. Suppose by contradiction that $x_1\in \partial \Omega$, then
\begin{eqnarray*}
\sup_\Omega u&=&u(x_0)\\
&\leq &\ell_p(x_1) +\langle p, x_0-x_1\rangle\\
&=& u(x_1)+\langle p, x_0-x_1\rangle\leq \sup_{\partial \Omega} u + \delta d< \sup_{\partial \Omega} u+ \delta_0 d=\sup_\Omega u,
\end{eqnarray*}
hence we get a contradiction. 

\medskip
 Next, as we have proved that $ B(0,\delta)\subset Du(E_\delta)  $, then by comparing volumes, we infer that 
\begin{equation}
c(n)\delta^n\leq \int_{E_\delta} |\det (D^2 u)|.
\end{equation}
Since $u$ is semi-convex with the constant $k>0$ and $D^2u\leq 0$  in $E_\delta$, we have $|\det (D^2 u)|\leq k^n $. It follows that 
$|E_\delta| \geq c(n) k^{-n}\delta^n$.
\end{proof}

\subsection{$\Gamma$-subharmonic functions}
We have defined $\G$ subharmonic functions as limits of admissible ones. Below we present the alternative viscosity and pluripotential points of view:

Let $\Omega\subset \C^n$ be a bounded domain. Denote $\omega=dd^c|z|^2$, where  $d:=i(\bar \partial+\partial)$ and $d^c:=\frac{i}{2\pi}(\bar \partial-\partial )$  so that $dd^c=\frac{i}{\pi}\partial \bar{\partial}$. Let  $\Gamma \subsetneq \R^n$ be an admissible cone as in  Definition \ref{admissiblecone}.  
We first recall the definition of $k$-subharmonic function:
\begin{definition}
We call a function $u\in C^2(\Omega)$ is  $k$-subharmonic  if for any $z\in \Omega$, the Hessian matrix $(u_{i \bar j })$ has eigenvalues forming a vector in the closure of the cone $\Gamma_k$. 
\end{definition}
Following the ideas of Bedford-Taylor \cite{BT2}, Blocki \cite{Blo05} introduced the pluripotential definition of the $k$-sh function. 

\begin{definition}
Let $u$ be subharmonic function on a domain $\Omega\subset \C^n$. Then $u$ is called $k$-subharmonic ($k$-sh for short) if for any collection of $C^2$-smooth $k$-sh functions $v_1,\ldots,v_{k-1}$, the inequality 
$$dd^cu\wedge dd^c v_1\wedge\ldots\wedge dd^cv_{k-1}\wedge \omega^{n-k}\geq 0$$ 
holds in the weak sense of currents.
\end{definition}

\noindent
For a general cone $\Gamma$, we have the following  definition in the spirit of viscosity theory:
\begin{definition}
An upper semicontinuous function  $u$  is called  $\Gamma$-subharmonic  (resp. strictly  $\Gamma$-subharmonic) if for any $z\in \Omega$,  and any upper test function $q$ of $u$ at $z$, we have 
$$
 \lambda(Hq (z))\in \overline{\Gamma}\quad (resp. \, \lambda(Hq (z))\in\Gamma).
$$ 
\end{definition}

By definition,  if $u$ is a  $ \Gamma$-subharmonic function, it is a $\Gamma$-subsolution in the sense of  Sz\'ekelyhidi \cite{Sze1}. In particular,  when $\Gamma=\Gamma_k$ for $k=1,\ldots n$, $u$ is a viscosity subsolution of the equation $$S_{k}(\lambda(Hu))=0,$$
where
$$ S_k(\lambda (Hu))={(dd^c u)^k \wedge \omega^{n-k}\over \omega^n}.$$
Then it follows from \cite{EGZ, Lu} that $u$ is a $k$-subharmonic function on $\Omega$, hence $u$ is a subharmonic function if $k=1$ and a plurisubharmonic function if $k=n$. 

\medskip
We also have the following definition generalizing the pseudoconvex domains (see also \cite{Li} for similar definition for smooth domains):
\begin{definition}
Let $\Omega$ be a  bounded domain in $\C^n$, we say that $\Omega$ is a $\Gamma$-pseudoconvex domain if there is a constant $C_\Omega>0$ depending only on $\Omega$ so that $-d(z)+C_\Omega d^2(z)$  is $\Gamma$-subharmonic on $\partial \Omega$, where $d(z):=dist(z,\partial \Omega)$. 
\end{definition}
\noindent
We recall the following lemma which was proved in \cite[Theorem 3.1]{Li}.
\begin{lemma}
\label{defining_function} Let $\Omega$ be bounded domain in $\C^n$ with $C^2$ smooth boundary.
 Let $\rho\in C^{2}(\bar \Omega)$ be a defining function of $\Omega$ so that $\lambda(H\rho)\in \Gamma$ on $ \partial \Omega$. Then there exists a defining function $\tilde{\rho}\in C^{2 }(\bar \Omega)$  for $\Omega$ such that $\lambda (H \tilde{ \rho}) \in \Gamma$ on $ \overline \Omega$. 
\end{lemma}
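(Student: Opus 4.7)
The plan is to glue $\rho$ (which is already $\Gamma$-subharmonic near $\partial\Omega$) with a strictly $\Gamma$-subharmonic auxiliary function in the interior, using the Demailly--Richberg regularized maximum, so that the resulting function inherits a $\Gamma$-positive Hessian everywhere while retaining the boundary data of $\rho$.

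First, I would exploit openness of $\Gamma$ and continuity of $H\rho$ to choose $\de_0>0$ so that $\la(H\rho)\in\G$ on the collar $V_{\de_0}:=\{z\in\bar\Om:\rho(z)>-\de_0\}$. In particular the ``bad set'' $B:=\{z\in\bar\Om:\la(H\rho(z))\notin\G\}$ is a compact subset of $\Om$ contained in $\{\rho\leq-\de_0\}$. Next I would take $h(z):=A|z|^2-c$ with $A>0$ so small that $A\cdot\operatorname{diam}(\bar\Om)^2<\de_0/2$, and $c$ chosen so that $h(z)\in(-4\de_0/5,-\de_0/5)$ on $\bar\Om$. Since $Hh=AI$, one has $\la(Hh)=(A,\dots,A)\in\G_n\subset\G$, so $h$ is strictly $\G$-subharmonic on all of $\C^n$.

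I would then define $\tilde\rho:=M_\eta(\rho,h)$ for a small $\eta\in(0,\de_0/10)$, where $M_\eta$ denotes Demailly's regularized maximum (smooth, convex in its arguments, non-decreasing in each variable, and equal to $\max$ off the set $\{|x-y|\leq\eta\}$). The verification proceeds as follows: near $\pa\Om$ we have $\rho-h\geq\de_0/5>\eta$ so $\tilde\rho=\rho$; in particular $\tilde\rho=0$ and $d\tilde\rho=d\rho\neq0$ on $\pa\Om$, making $\tilde\rho$ a defining function. On $B$ we have $h-\rho\geq\de_0/5>\eta$ so $\tilde\rho=h$ and $\la(H\tilde\rho)=(A,\dots,A)\in\G$. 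In the transition strip $\{|\rho-h|<\eta\}$ the choice of parameters forces $\rho\in(-9\de_0/10,-\de_0/10)\subset V_{\de_0}$, so both $\rho$ and $h$ are strictly $\G$-subharmonic there; and the standard computation
$$\pa_i\bar\pa_j M_\eta(\rho,h) = (\pa_1 M_\eta)\rho_{i\bar j} + (\pa_2 M_\eta)h_{i\bar j} + Q_{i\bar j},$$
where $Q_{i\bar j}$ is the positive semidefinite rank-$\leq 2$ form associated to the positive semidefinite Hessian of the convex function $M_\eta$, shows $H\tilde\rho$ is a convex combination of two $\G$-positive matrices plus a positive semidefinite term.

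The key obstacle is the last step: the stability of $\G$-positivity under convex combinations and positive semidefinite perturbations. For the admissible cones in the paper this reduces to the fact that the set of Hermitian matrices with $\la(A)\in\bar\G$ is itself a convex cone stable under addition of $\G_n$-positive matrices, which follows from G\aa rding's theorem applied to the hyperbolic polynomial cutting out $\G$ (and in particular holds for the cones $\G_m$ of interest here). Finally, the strict inequality $\tilde\rho<0$ in $\Om$ follows from $\max(\rho,h)<0$ on $\Om$ together with $M_\eta(\rho,h)\leq\max(\rho,h)+\eta<0$ under our choice $\eta<\de_0/10$. Smoothness of $\tilde\rho$ to class $C^2$ (actually $C^\infty$ in the construction) is automatic from the smoothness of $M_\eta$ and of $\rho,h$.
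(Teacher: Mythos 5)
The paper does not give its own proof of this lemma: it simply cites S.--Y.\ Li \cite[Theorem 3.1]{Li}. Your gluing construction via the Demailly regularized maximum is a self-contained and essentially correct alternative route, and the parameter bookkeeping ($\delta_0$, $A$, $c$, $\eta$) indeed forces the three regimes ($\tilde\rho=\rho$ near $\partial\Omega$, $\tilde\rho=h$ on the bad set, and the transition strip inside the collar where both inputs are strictly $\Gamma$--subharmonic) to tile $\overline\Omega$ as you claim. Let me flag three points you should tighten.

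First, the conclusion $\tilde\rho\in C^2$ is correct but the parenthetical ``actually $C^\infty$ in the construction'' is not: $M_\eta$ is smooth and $h$ is smooth, but $\rho$ is only assumed $C^2$, so $\tilde\rho=M_\eta(\rho,h)$ is only $C^2$. Since $C^2$ is all the lemma asserts, this costs nothing, but the aside should be deleted.

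Second, the final assertion ``$M_\eta(\rho,h)\leq\max(\rho,h)+\eta<0$ in $\Omega$'' fails near $\partial\Omega$ where $\rho\to 0$. The correct argument splits into cases: where $\rho-h>\eta$ you have $\tilde\rho=\rho<0$ outright; where $\rho-h\leq\eta$ you have $\max(\rho,h)\leq h+\eta<-\delta_0/5+\eta$, so $\tilde\rho\leq -\delta_0/5+2\eta<0$ from $\eta<\delta_0/10$. Easy to repair, but as written it is a false inequality.

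Third, and this is the substantive point: the step that $M_1 H\rho+M_2 Hh+Q$ has spectrum in $\Gamma$ uses that $\mathcal{C}_\Gamma:=\{A\in\mathcal{H}^n:\lambda(A)\in\Gamma\}$ is convex and satisfies $\mathcal{C}_\Gamma+\{N\geq 0\}\subset\mathcal{C}_\Gamma$. Your appeal to G\aa rding's theorem only covers the case when $\Gamma$ is the G\aa rding cone of a hyperbolic polynomial (so it does dispose of $\Gamma_m$ and hence the Hessian quotient equations). But the lemma is stated for an arbitrary admissible cone $\Gamma$, and in Section~\ref{lagrangian_section} the paper uses precisely such a non--G\aa rding cone coming from the supercritical Lagrangian phase. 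The correct general input is the spectral convexity theorem of Davis (equivalently, Lewis's theorem on unitarily invariant convex functions): since $\Gamma$ is a symmetric convex open subset of $\R^n$, the indicator $\delta_\Gamma\circ\lambda$ is a convex unitarily invariant function on $\mathcal{H}^n$, so $\mathcal{C}_\Gamma=\lambda^{-1}(\Gamma)$ is convex; the stability $\mathcal{C}_\Gamma+\{N\geq 0\}\subset\mathcal{C}_\Gamma$ then follows from $\Gamma+\overline{\Gamma}_n\subset\Gamma$, which is a consequence of $\Gamma_n\subset\Gamma$ together with $\Gamma$ being open and a convex cone. With that citation replaced, the proof is complete and gives, in this case, a short alternative to invoking \cite{Li}.
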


Finally we wish to recall the survey article \cite{Ze} where the Reader may find a thorough discussion of the viscosity theory associated to complex Monge-Amp\`ere type equations.

 \section{Comparison principles}
Comparison principles are basic tools in pluripotential theory- we refer to \cite{K98, GZ2} for a thorough discussion of these inequalities. In viscosity theory one compares sub- and supersolutions to the same equation. It is a crucial observation (cf.  \cite{EGZ}) that even though supersolutions may fail to have nice pluripotential properties a version of the comparison principle holds for the complex Monge-Amp\`ere equation. In this section we discuss under what assumptions such comparison principles hold for general operators.

 \subsection{A preliminary comparison principle}
Let $\Omega$ be a bounded domain in $\C^n$. In this subsection we prove a comparison principle for viscosity solutions of the following equation:
\begin{equation}
\label{general_eq2}
F[u]:= F(x,u, D u,Hu)=0.
\end{equation} 
It is well known that mere ellipticity is insuffiecient to guarantee comparison type result. Hence we add some natural structural conditions for the equation \ref{general_eq2}.

First of all we  assume  that $F$ is decreasing in the $s$ variable, namely
\begin{equation}\label{proper}
\forall r>0\ F(z,s,p,M)-F(z, s+r, p,M)\geq 0.
\end{equation}
This is a natural assumption in the theory (see \cite{Ze}) as it yields an inequality in the "right" direction for the maximum principle.

Next we assume certain continuity property with respect to the $z$ and $p$ variables: 
\begin{equation}\label{lipschizt}
|F(z_1,s,p_1,M)-F(z_2, s,p_2,M )|\leq \alpha_z(|z_1-z_2|)+\alpha_p(|p_1-p_2|) ,
\end{equation}
for all $z_1,\ z_2\in \Omega$, $\s\in\R$,  $p_1,\ p_2\in \C^n$, $M\in \mathcal{H}^n$. Here $\alpha_z$ and $\alpha_p$ are certain moduli of continuity i.e. increasing functions defined for nonnegative reals which tend to zero as the parameter decreases to zero.

We can now state the following general comparison principle for the equation (\ref{general_eq2}).

\begin{lemma}\label{general_comparision}
Suppose $u\in L^\infty(\overline \Omega)$ (resp. $v\in L^\infty (\overline{\Omega})$) satisfies $F[u]\geq \delta$ (resp. $F[v]\leq 0$) in $\Omega$ in the viscosity sense for some $\delta>0$. Then 
\begin{equation}\label{comparison1}
\sup_{\Omega}(u-v)\leq \max_{\partial \Omega} \left\lbrace  (u-v)^*,0\right\rbrace,
\end{equation}
with $^*$ denoting the standard upper semicontinuous regularization.
\end{lemma}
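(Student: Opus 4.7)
My plan is to adapt Trudinger's viscosity comparison argument from \cite{Tr90} to the complex setting, with Lemma \ref{ABP} providing the main geometric input. I argue by contradiction: assume \eqref{comparison1} fails. Then on some $\Omega' \Subset \Omega$ the upper semicontinuous function $u - v$ has an interior maximum exceeding its boundary supremum by at least $\tau_0 \operatorname{diam}(\Omega')$ for some $\tau_0 > 0$.

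I first regularize via the standard sup- and inf-convolutions
\begin{equation*}
u^{\e}(z) := \sup_{y \in \Omega} \bigl\{ u(y) - \e^{-1}|y - z|^2 \bigr\}, \qquad v_{\e}(z) := \inf_{y \in \Omega} \bigl\{ v(y) + \e^{-1}|y - z|^2 \bigr\},
\end{equation*}
which are respectively semi-convex and semi-concave, and decrease (resp. increase) pointwise to $u$ (resp. $v$). A routine Crandall-Ishii-Lions calculation, based on the shift point $\tilde z_0$ attaining the sup or inf together with \eqref{proper} and \eqref{lipschizt}, shows that on an interior subdomain $\Omega'' \Subset \Omega'$, $u^{\e}$ is a subsolution of $F[\,\cdot\,] \geq \delta - \sigma(\e)$ and $v_{\e}$ satisfies $\inf_{N \geq 0} F(z, v_{\e}, Dv_{\e}, Hv_{\e} + N) \leq \sigma(\e)$, where $\sigma(\e) \to 0$ as $\e \to 0$. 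For $\e$ small enough, $w := u^{\e} - v_{\e}$ is semi-convex on $\Omega''$ and still has a strict interior maximum.

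Next I apply Lemma \ref{ABP} to $w$ on $\Omega''$: for every sufficiently small $r > 0$, the touching set $E_r$ has positive Lebesgue measure in $\R^{2n}$. By Aleksandrov's theorem, semi-convex and semi-concave functions are pointwise twice differentiable a.e., so I can pick $z_0 \in E_r$ at which both $u^{\e}$ and $v_{\e}$ admit pointwise second-order Taylor expansions. At this $z_0$, the affine support forces $|Du^{\e}(z_0) - Dv_{\e}(z_0)| \leq r$ and $D^2_{\R}(u^{\e} - v_{\e})(z_0) \leq 0$, the latter descending to the Hermitian inequality $Hu^{\e}(z_0) \leq Hv_{\e}(z_0)$. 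Perturbing the pointwise Taylor polynomials by small quadratics produces admissible upper (resp. lower) tests, and passing to the limit via continuity of $F$ yields
\begin{equation*}
F(z_0, u^{\e}, Du^{\e}, Hu^{\e}) \geq \delta - \sigma(\e), \qquad \inf_{N \geq 0} F(z_0, v_{\e}, Dv_{\e}, Hv_{\e} + N) \leq \sigma(\e).
\end{equation*}

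Finally, I reach the contradiction by chaining these against the structural hypotheses: \eqref{lipschizt} lets me swap $Du^{\e}$ for $Dv_{\e}$ at cost $\alpha_p(r)$, \eqref{proper} lets me swap $u^{\e}(z_0)$ for $v_{\e}(z_0)$ (using $u^{\e}(z_0) > v_{\e}(z_0)$), and degenerate ellipticity lets me move from $Hu^{\e}$ up to $Hv_{\e} + N \geq Hu^{\e}$; altogether,
\begin{equation*}
\delta - \sigma(\e) \;\leq\; F(z_0, v_{\e}, Dv_{\e}, Hv_{\e} + N) + \alpha_p(r) \;\leq\; \sigma(\e) + \alpha_p(r) + \eta
\end{equation*}
for arbitrary $\eta > 0$. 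Sending $\eta, r \downarrow 0$ and then $\e \downarrow 0$ yields $\delta \leq 0$, contradicting $\delta > 0$. I expect the main technical obstacle to be the ellipticity step, which requires the segment from $Hu^{\e}(z_0)$ to $Hv_{\e}(z_0) + N$ to lie within the ellipticity set of $F$; this will be secured by combining the strict subsolution inequality with \eqref{aaaa} to force $\lambda(Hu^{\e}(z_0)) \in \G$, and then noting that adding $\overline{\G_n}$-directions keeps us inside $\G$.
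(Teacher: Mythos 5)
Your overall strategy coincides with the paper's: sup/inf convolutions, the ABP lemma applied to $u^{\e}-v_{\e}$, Aleksandrov twice-differentiability to extract a good touching point, and a chain of inequalities through \eqref{proper} and \eqref{lipschizt} ending in a contradiction with the supersolution property. That much is faithful to the Trudinger/Jensen template the authors follow.

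However, there is a genuine gap at the step you yourself flagged. Your chain needs
\begin{equation*}
F\bigl(z_0, u^{\e}, Du^{\e}, Hu^{\e}\bigr) \;\leq\; F\bigl(z_0, u^{\e}, Du^{\e}, Hv_{\e}+N\bigr),
\end{equation*}
and you propose to justify it by degenerate ellipticity along the segment from $Hu^{\e}$ to $Hv_{\e}+N$, arguing via \eqref{aaaa} that $\lambda(Hu^{\e}(z_0))\in\G$. This does not work under the hypotheses of Lemma \ref{general_comparision}. First, the lemma assumes only \eqref{proper} and \eqref{lipschizt}; it does not assume \eqref{aaaa}, nor does it assume $F$ arises from an ellipticity cone at all. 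Second, even when an ellipticity cone $\G$ exists, the definition says only that the ellipticity set is \emph{contained} in $\{M:\lambda(M)\in\overline{\G}\}$, not that they coincide; so $\lambda(M)\in\G$ does not let you conclude that $F$ is increasing in $\G_n$-directions at $M$, let alone along the entire segment.

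The paper sidesteps ellipticity entirely by a cheaper observation that your write-up omits: if $q$ is an upper test for $u^{\e}$ at $z_0$, then so is $q + \langle N(z-z_0), z-z_0\rangle$ for every $N\geq 0$, hence the approximate subsolution inequality already holds in the strengthened form
\begin{equation*}
F\bigl(z^*, u^{\e}(z), Du^{\e}(z), N' + Hu^{\e}(z)\bigr) \geq \delta \quad\text{for all } N'\geq 0,
\end{equation*}
at points of twice-differentiability (this is the paper's inequality \eqref{visco_ineq1}). Taking $N' = N + (Hv_{\e}-Hu^{\e}) \geq 0$ on the touching set immediately gives $F(z^*, u^{\e}, Du^{\e}, N+Hv_{\e})\geq\delta$ for all $N\geq 0$, after which \eqref{proper} and \eqref{lipschizt} produce the contradiction exactly as you describe — with no ellipticity needed and no appeal to $\G$. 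Your proof would be correct and essentially identical to the paper's once you replace the ellipticity step by this strengthening of the subsolution inequality; as written, the ellipticity step is unjustified.
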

\begin{proof}

The idea comes from \cite{Tr90}. 
We use Jensen's approximation (cf. \cite{Jen}) for $u,v$ which is defined by 
\begin{eqnarray} \nonumber
u^\e(z)&=& \sup_{z'\in \Omega}\left\lbrace u(z')- \frac{C_0}{\e} |z'-z|^2\right\rbrace,\\\label{Jensen_appr}
v_\e(z)&=&\inf_{z'\in \Omega}\left\lbrace v(z')+\frac{C_0}{\e} |z'-z|^2\right\rbrace,
\end{eqnarray}
where $\e>0$ and $C_0=\max\{{\rm osc}_\Omega u, {\rm osc}_\Omega v\}$  with ${\rm osc}(u)=\sup u_\Omega-\inf_\Omega u$. Then the supermum and infimum in  (\ref{Jensen_appr}) are achieved at points $z^*, z_*\in \Omega$ with $|z-z^*|, | z-z_*|<\e $ provided that $z\in \Omega_\e=\{z\in \Omega| \, dist(z,\partial \Omega)>\e \}$.  
 It follows from \cite{CC95} (see also  \cite{Wa12} for an adaption in the complex case) that $u^\e$ (resp. $v_\e$) is Lipschitz  and semi-convex (resp. semi-concave) in $\Omega_\e$, with 
 \begin{equation}
 |D u^\e|, |D v_\e|\leq \frac{2C_0}{\e}, \quad H u^\e, -H v_\e \geq -\frac{2C_0}{\e^2}{\rm Id} ,
 \end{equation}
whenever these derivatives are well defined.

Exploiting the definition of viscosity subsolution one can show that $u^{\e}$ satisfies 
\begin{equation}
\label{visco_e}
F(z^*, u^\e(z), D u^\e(z), H u^\e(z))\geq \delta
\end{equation}
in the viscosity sense for all $z\in \Omega_\e$.
 Indeed, let $q$ be an upper test of $ u^\e$ at $z_0$, then the function
$$\tilde{q}(z):= q( z+z_0-z_0^*) +\frac{1}{\e}|z_0-z_0^* |^2$$ 
is an upper test for $u$ at $z_0^*$.  Therefore we get (\ref{visco_e}) as $u$ is a viscosity subsolution. This also implies that 
\begin{equation}
F(z^*, u^\e(z), D u^\e(z), N+ H u^\e(z))\geq \delta,
\end{equation}
in the viscosity sense for any fixed matrix $N\geq 0$.  Since  any locally semi-convex (semi-concave) function is twice differentiable  almost everywhere by Aleksandroff's theorem, we infer that for almost all $z\in \Omega_\e$, $F$ is degenerate elliptic at $(z^*, u^\e(z), D u^\e(z), H u^\e(z))$ and
\begin{equation}\label{visco_ineq1}
F(z^*, u^\e(z), D u^\e(z), N+ H u^\e(z))\geq \delta  ,
\end{equation}
for all $N\in \mathcal{H}^n$ such that $N\geq 0$. 

\medskip

We assume  by contradiction that $\sup_\Omega (u-v) =u(z_0) -v( z_0)=a>0$ for some $z_0\in  \Omega$.  For any $\e$ sufficiently small the function $w_\e:= u^\e- v_\e$ has a positive maximum on $\Omega_\e$ at some point $z_\e\in \Omega_\e$ such that $z_\e \rightarrow z_0$ as $\e\rightarrow 0$.   So we can choose $\e_0>0$ such that that for any $\e<\e_0$,  $w_\e:= u^\e- v_\e$ has a positive maximum on $\Omega_\e$ at some point $z_\e\in \Omega$ with $d(z_\e,\partial \Omega)>\e_0$. 
Applying the ABP maximum principle (Lemma \ref{ABP}), for  the function $w_\e$ on $\Omega_{\e_0} $ and for any  $\lambda>0$ sufficiently small, there exist a set $E_\lambda \subset \Omega_{\e_0}$ containing $z_\e$ with $| E_\lambda|\geq c\lambda^n$, where  $c$ is $c(n)\e^{2n} $,  such that $|D w_\e|\leq \lambda $ and $H w_\e\leq 0$ almost everywhere in $E_\lambda$.  Since $w_\e(z_\e)>0$, we can choose $\lambda$ small enough such that $w_\e\geq 0$ in $E_\lambda$.  The condition (\ref{proper}) and the fact that $F$ is degenerate elliptic at $(z^*, u^\e(z), D u^\e(z), H u^\e(z))$ for almost all $z\in E_{\lambda}$,  imply that 
\begin{equation}
\label{ineq_comparison}
F(z^*, u^\e(z), D u^\e(z), N+ H u^\e(z))\leq F(z^*, v_\e(z), D u^\e(z), N+ H v^\e(z)).
\end{equation}
Using (\ref{lipschizt}) and the fact that $|D(u^\e-v_\e) |\leq \lambda$, we get
$$F(z^*, v_\e(z), D u^\e(z), N+ H v^\e(z))\leq  F(z^*, v_\e(z), D v^\e(z), N+ H v^\e(z)) +\alpha_p(\lambda).$$
Combining with (\ref{lipschizt}), (\ref{visco_ineq1}), (\ref{ineq_comparison})  and $|z^*-z_*|<\e$ that for almost all $z\in  E_\lambda$, 
\begin{equation}
F(z_*,v_\e(z),D v_\e(z), N+Hv_\e(z))\geq \delta-\alpha_z(\e)-\alpha_p(\lambda).
\end{equation} 
By taking $\lambda$, and then $\e$ sufficiently small and using the  fact that $v_\e$ is twice differentiable almost everywhere on $\Omega$, we can find at a fixed point $z_1\in E_\lambda$ a lower test $q$ of $v$  at $z_1$ such that 
\begin{equation}
F\left(z_0, q (z_0), D(z_0), N+Hq(z_0)\right)\geq \frac{1}{2}\delta,
\end{equation}
for all $N\geq 0$. This contradicts the definition of viscosity supersolution. Therefore we  get (\ref{comparison1}).    
\end{proof}

\noindent
{\bf Remark.}
By assuming more properties of $F$, it is possible to obtain $\delta=0$ in the previous result. This is the case for the Monge-Amp\`ere equation. Otherwise we need to adjust  function $u$ to achieve a strict inequality in order to use Lemma \ref{general_comparision}.

\subsection{Comparison principle for Hessian type equations}
We now consider the Hessian type equation of the form
\begin{equation}\label{Hessian_equation}
F[u]=\psi(z,u),
\end{equation}
where $\psi \in C^0(\Omega\times \R)$ and  $ F[u]=f(\lambda(Hu))$ such that

\begin{equation}
 s\mapsto \psi (\cdot,s)\, \text{ is weakly increasing},
\end{equation}
\begin{equation}
f\in C^0(\bar \Gamma), f>0 \,\, {\rm on}\,\, \Gamma, f=0 \,\, {\rm on}\,\, \partial\Gamma,
\end{equation}
and
\begin{equation}
 f(\lambda+\mu)\geq f(\lambda),\, \,\forall \lambda\in \Gamma, \mu\in \Gamma_n.
\end{equation}

 First, {\it In order to use Lemma \ref{general_comparision}, we extend $f$ continuously on $\R^n$ by taking $f(\lambda)=0$ for all $\lambda\in \R^n\setminus \Gamma$. } 
\medskip
For a $\delta$ independent comparison principle we need more assumptions on $F$.  Similarly to  \cite{Tr90}, we can assume that the operator $F[u]=f(\lambda(Hu))$ satisfies 
\begin{equation}\label{conditioncomparison}
\sum_{i=1}^n\frac{\partial f}{\partial \lambda_i}\lambda_i=\sum_{i=1}^nf_i\lambda_i\geq \nu (f) \text{ in } \Gamma,\ inf_{z\in\Omega}\psi(z,\cdot)>0
\end{equation}
for some positive increasing function $\nu$.  

This condition is satisfied for example in the case of the complex Hessian equations $F[u]:=\sigma_k(\lambda(Hu))$, $k\in\lbrace1,\cdots,\ n\rbrace$.

We also study a  {\it new condition} namely
\begin{equation}\label{condition_comparison2} 
  f\ is\ concave\ and\ homogeneous,
\end{equation}
i.e $f(t\lambda)=t f(\lambda), \forall t\in \R^{+}$. 

\begin{theorem}\label{comprison_principle}
Let $u,v\in L^\infty(\overline{\Omega})$ be viscosity subsolution and supersolution of equation (\ref{Hessian_equation}) in $\Omega$.  Assume that either $f$ satisfies either  (\ref{conditioncomparison}) or (\ref{condition_comparison2}).  Then
\begin{equation}\label{comparison_2}
\sup_{\Omega}(u-v)\leq \max_{\partial \Omega} \left\lbrace  (u-v)^*,0\right\rbrace.
\end{equation}
\end{theorem}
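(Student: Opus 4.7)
The plan is to reduce the theorem to the preliminary Lemma \ref{general_comparision}, which already delivers (\ref{comparison_2}) whenever the subsolution satisfies a \emph{strict} viscosity inequality $F[u_\varepsilon]\geq\delta>0$. Thus the core task is, given $u$, to construct a perturbation $u_\varepsilon\leq u$ with $u_\varepsilon\to u$ uniformly and $F[u_\varepsilon]\geq\delta(\varepsilon)>0$ in the viscosity sense; Lemma \ref{general_comparision} applied to $(u_\varepsilon,v)$ will then yield $\sup_\Omega(u_\varepsilon-v)\leq\max_{\partial\Omega}\{(u_\varepsilon-v)^*,0\}$ and the uniform limit $\varepsilon\to 0$ gives (\ref{comparison_2}). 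The structural hypothesis (\ref{lipschizt}) for $F(z,s,p,M)=f(\lambda(M))-\psi(z,s)$ reduces to a $z$-modulus of continuity for $\psi$ uniform in $s$, which I take as a standing assumption on the data.

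Under (\ref{condition_comparison2}), $f$ is concave and positively $1$-homogeneous on $\overline\Gamma$, so by Davis's theorem $M\mapsto f(\lambda(M))$ is concave on Hermitian matrices and, combined with homogeneity, superadditive: $f(\lambda(A+B))\geq f(\lambda(A))+f(\lambda(B))$ whenever $\lambda(A),\lambda(B)\in\overline\Gamma$. I take the quadratic perturbation $\phi(z):=|z|^2-R^2$ with $R^2>\sup_\Omega|z|^2$, so $\phi<0$ on $\Omega$ and $H\phi=\mathrm{Id}$, and set $u_\varepsilon:=u+\varepsilon\phi$. Any upper test $q$ for $u_\varepsilon$ at $z_0$ gives the upper test $\tilde q:=q-\varepsilon\phi$ for $u$ at $z_0$; the subsolution property delivers $f(\lambda(H\tilde q(z_0)))\geq\psi(z_0,u(z_0))$, and superadditivity with $\varepsilon\,\mathrm{Id}$ yields
\begin{equation*}
f(\lambda(Hq(z_0)))\geq f(\lambda(H\tilde q(z_0)))+\varepsilon f(1,\dots,1)\geq\psi(z_0,u(z_0))+\varepsilon c_0,
\end{equation*}
with $c_0:=f(1,\dots,1)>0$ since $(1,\dots,1)\in\Gamma_n\subset\Gamma$. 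Combined with $u_\varepsilon\leq u$ and the monotonicity of $\psi$ in $s$, this produces $F[u_\varepsilon]\geq\varepsilon c_0$ in the viscosity sense.

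Under (\ref{conditioncomparison}), the positivity $\psi_0:=\inf\psi>0$ forces any viscosity subsolution of (\ref{Hessian_equation}) to be strictly $\Gamma$-admissible at every upper test, since $f\equiv 0$ off $\Gamma$ while the inequality demands $f\geq\psi_0>0$. I use the multiplicative perturbation $u_\varepsilon:=(1+\varepsilon)u-\varepsilon C$ with $C:=\sup_\Omega u$, so $u_\varepsilon\leq u$ and $u_\varepsilon\to u$ uniformly. An upper test $q$ for $u_\varepsilon$ at $z_0$ corresponds to the upper test $\tilde q:=(q+\varepsilon C)/(1+\varepsilon)$ for $u$ at $z_0$, with $\lambda(Hq(z_0))=(1+\varepsilon)\mu$ where $\mu:=\lambda(H\tilde q(z_0))\in\Gamma$ and $f(\mu)\geq\psi_0$. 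The condition $\sum f_i\lambda_i\geq\nu(f)$ rewrites as $\frac{d}{dt}f(t\mu)\geq t^{-1}\nu(f(t\mu))$, and radial monotonicity $f(t\mu)\geq f(\mu)\geq\psi_0$ for $t\geq 1$ together with integration on $[1,1+\varepsilon]$ delivers
\begin{equation*}
f(\lambda(Hq(z_0)))-f(\mu)\geq\nu(\psi_0)\ln(1+\varepsilon).
\end{equation*}
Combining with $f(\mu)\geq\psi(z_0,u(z_0))\geq\psi(z_0,u_\varepsilon(z_0))$ gives $F[u_\varepsilon]\geq\nu(\psi_0)\ln(1+\varepsilon)>0$.

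The main obstacle will be the rigorous viscosity implementation of the superadditivity step in the concave-homogeneous case: one must isolate the admissible upper tests (whose Hessian eigenvalues lie in $\overline\Gamma$) to which Davis's concavity applies, and verify that tests leaving $\overline\Gamma$ are harmless (they force $\psi\leq 0$ at the base point through the extension $f\equiv 0$, while $\overline\Gamma$ is invariant under adding a multiple of the identity, so $\lambda(Hq(z_0))\in\overline\Gamma$ whenever $\lambda(H\tilde q(z_0))\in\overline\Gamma$). A secondary technical point is the $z$-modulus of continuity of $\psi$ needed for (\ref{lipschizt}); once these are in place, uniform convergence $u_\varepsilon\to u$ propagates through the boundary maximum on the right-hand side of Lemma \ref{general_comparision} and concludes the proof.
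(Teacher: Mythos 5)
Your proposal is correct and takes essentially the same route as the paper: in both cases you reduce to Lemma \ref{general_comparision} by perturbing the subsolution to make the inequality strict, using the additive quadratic perturbation $u+\varepsilon(|z|^2-R^2)$ together with superadditivity of $f$ under (\ref{condition_comparison2}), and the multiplicative perturbation $(1+\varepsilon)u-\varepsilon\sup_\Omega u$ together with $\sum f_i\lambda_i\geq\nu(f)$ under (\ref{conditioncomparison}). The only cosmetic differences are that you integrate the radial derivative where the paper invokes the mean value theorem (equivalent), and the admissibility concern you flag at the end is automatically resolved because $\psi>0$ and the zero extension of $f$ off $\Gamma$ force every upper test of a subsolution to have Hessian eigenvalues in $\Gamma$.
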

\begin{proof}
Assume first that $f$ satisfies (\ref{conditioncomparison}). Then following \cite{Tr90}, we set for any $t\in (1,2)$,
$$u_t(z)=tu(z) -C(t-1) ,$$ where $C=\sup_\Omega u $. Therefore we have  $u_t(z)\leq u(z) $  
on  $\Omega$ for all $t\in (1,2)$. Then for any $z_0\in \Omega$ and an upper test function $q_t(z) $ of $u_t$ at $z_0$ we have $q(z):=t^{-1} q_t(z)-C(t^{-1}-1)$  is also an upper test for $u$  at $z_0$. Set $\lambda=\lambda[q](z_0)$, then $\lambda[q_t](z_0)=t\lambda$ and $q(z_0)\geq q_t(z_0)$.   We also recall that  the function $s\mapsto f(s\lambda)$ is increasing on $\R^+$ (by (\ref{conditioncomparison})) and $f(\lambda) \geq \psi(z,u(z_0))$ since $q$ is an upper test for $u$ at $z_0$.  It follows that  at $z_0$,
\begin{eqnarray*}
F[q_t]&=&f(\lambda[q_t] )=f(t\lambda)\\
&\geq & f(\lambda)+ (t-1) \sum \lambda_if_i(t^*\lambda)\\
&\geq &\psi (z_0, q(z_0))+ (t-1) \sum \lambda_if_i(t^*\lambda)\\
&\geq& \psi (z_0, q_t(z_0)) +\frac{t-1}{2}\nu(\inf_{\Omega} \psi(z,\inf_\Omega u))
\end{eqnarray*}
for $1\leq t^*<t$, sufficiently close to 1. Therefore we have for some $\delta>0$
$$F[u_t]\geq \psi (z,u_t)+\delta,$$ in the viscosity sense in $\Omega$. Thus the inequality  (\ref{comparison_2}) follows from Lemma \ref{general_comparision}. 

\medskip
Next, consider  the second case when $f$ is concave and homogeneous. Suppose, without loss of generality, that $0\in \Omega$.  We  set $$u_\tau(z)=u(z)+\tau  (|z|^2-R),$$
where $R=diam(\Omega)$. 
Then for any $q_\tau\in C^2(\Omega)$ such that $q_\tau \geq u_\tau$ near $z_0$ and $q_\tau(z_0)=u_\tau(z_0)$, we have $q=q_\tau-\tau(|z|^2-R) \geq q_\tau$, and $q$ is also an upper test for $ u$ at $z_0$. Therefore, we have at $z_0$,
\begin{eqnarray}
F[q_\tau]&= &2^df\left(\frac{\lambda(Hq)+\tau {\bf 1}}{2} \right) \\ \nonumber
&\geq & f(\lambda(Hq))+ f(\tau {\bf 1})\\ \nonumber
&\geq& \psi(z_0, q_\tau) +\delta.
\end{eqnarray}
Therefore $F[u_\tau]\geq \psi +\delta  $ in the viscosity sense.
Applying Lemma \ref{general_comparision} we get (\ref{comparison_2}). 
\end{proof}

By definition, we have the following  properties of sub(super)-solutions. Their proofs follow in a straightforward way from \cite[Proposition 4.3]{CIL}. 
\begin{lemma}
{\rm (a)} Let $\{u_j\}$  be viscosity subsolutions of (\ref{Hessian_equation}) in $\Omega$, which are uniformly bounded from above. Then $(\limsup_\Omega u_j)^*$ is also a viscosity subsolution of (\ref{Hessian_equation}) in $\Omega$.\\
{\rm (b) }Let $\{v_j\} $  be viscosity supersolutions of (\ref{Hessian_equation}) in $\Omega$, which are uniformly bounded from below. Then $(\liminf_\Omega  v_j)_*$ is also a viscosity supersolution of (\ref{Hessian_equation}) in $\Omega$.\\
\end{lemma}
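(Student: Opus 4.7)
My plan is to follow the classical stability-under-relaxed-limits argument for viscosity solutions, adapted in the obvious way to the complex Hessian setting; this is essentially \cite[Proposition 4.3]{CIL}. Both parts hinge on the device of converting a (possibly non-strict) contact point between a test function and $u$ (resp.\ $v$) into a strict one by a quartic perturbation, and then extracting a subsequence of contact points for the $u_j$ (resp.\ $v_j$) that converges to the original base point. The quartic $|z-z_0|^4$ is chosen because both its complex gradient and its complex Hessian vanish at $z_0$, so the perturbed test function has the same first-order data and Hessian at $z_0$ as the original, and we can pass to the limit directly without an additional cleanup.

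For part (a), set $u := (\limsup_{j\to\infty} u_j)^*$ and let $q$ be an upper differential test for $u$ at $z_0\in\Omega$. Replace $q$ by $\tilde q(z) := q(z) + |z-z_0|^4$: this is still an upper test for $u$ at $z_0$, and on any closed ball $\overline{B(z_0,r)}\subset\Omega$ the contact is now strict. Since each $u_j$ is upper semi-continuous, $u_j-\tilde q$ attains its maximum on $\overline{B(z_0,r)}$ at some $z_j$, and the standard argument for the upper half-relaxed limit yields, along a subsequence, $z_j\to z_0$ and $u_j(z_j)\to u(z_0)$. For $j$ large $z_j$ is interior, so $\tilde q$ is an upper test for $u_j$ at $z_j$, and the subsolution inequality gives $f(\lambda(H\tilde q(z_j)))\geq \psi(z_j,u_j(z_j))$. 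Continuity of $f$, of $\lambda(\cdot)$, of $H\tilde q$, and of $\psi$ then yields $F[q](z_0)\geq \psi(z_0,u(z_0))$.

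Part (b) is dual: with $v:=(\liminf_{j\to\infty}v_j)_*$, a lower test $q$ for $v$ at $z_0$, and the perturbation $\tilde q(z):=q(z)-|z-z_0|^4$, the symmetric selection produces, along a subsequence, $z_j\to z_0$ with $v_j-\tilde q$ minimised at $z_j$ and $v_j(z_j)\to v(z_0)$, and the supersolution condition from Definition \ref{def} at each $z_j$ gives
\[
\inf_{N\geq 0} F\!\left(z_j, v_j(z_j), D\tilde q(z_j), N + H\tilde q(z_j)\right) \leq \psi(z_j, v_j(z_j)).
\]
The main technical obstacle I foresee is passing this inequality to the limit, because the pointwise infimum over $N\geq 0$ of a continuous function is only upper semi-continuous in its arguments, which is the wrong direction of semicontinuity. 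My plan is to fix $\eta>0$, pick near-minimisers $N_j\geq 0$ with $F(z_j,\cdots,N_j+H\tilde q(z_j))\leq \psi(z_j,v_j(z_j))+\eta$, and exploit the structural assumptions on $f$ from Section 4.1 (namely $f=0$ outside $\Gamma$ and monotone in the $\Gamma_n$-direction within $\bar\Gamma$) to argue that $\{N_j\}$ can be chosen bounded: the effective infimum is attained either at $N=0$ when $\lambda(Hq(z_0))\in\bar\Gamma$, or at a finite $N$ driving $\lambda(M+N)$ into $\partial\Gamma$ where $f$ vanishes by our extension. Passing to a further subsequence $N_j\to N_\infty\geq 0$ and invoking continuity of $F$ and $\psi$ yields the supersolution inequality at $z_0$, and sending $\eta\to 0$ completes the proof.
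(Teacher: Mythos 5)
Your argument is the standard stability-under-relaxed-limits proof from \cite[Proposition 4.3]{CIL}, which is exactly the reference the paper invokes for this lemma, so you are following the intended route. One simplification in part (b): you need not introduce near-minimisers $N_j$ nor argue that they stay bounded, because $N_j=0$ always suffices. Indeed, when $\lambda(H\tilde{q}(z_j))\in\overline{\Gamma}$ the $\Gamma_n$-monotonicity of $f$ forces the infimum of $N\mapsto f(\lambda(N+H\tilde{q}(z_j)))$ over $N\geq 0$ to be attained at $N=0$; and when $\lambda(H\tilde{q}(z_j))\notin\overline{\Gamma}$ the $0$-extension of $f$ gives $f(\lambda(H\tilde{q}(z_j)))=0$, which is automatically $\leq\psi(z_j,v_j(z_j))$ since $\inf_{N\geq 0}f\geq 0$ combined with the supersolution inequality forces $\psi(z_j,v_j(z_j))\geq 0$. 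Either way one gets $f(\lambda(H\tilde{q}(z_j)))\leq\psi(z_j,v_j(z_j))$ directly, and passing to the limit as $z_j\to z_0$, $v_j(z_j)\to v(z_0)$ closes the argument exactly as you describe, without any compactness discussion for the $N_j$.
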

\noindent
Now using  Perron's method (see for instance \cite{CIL}), we obtain the next result:
\begin{lemma}
\label{Perron}
Let $\underline{u}, \overline{u}\in L^{\infty}(\Omega)$ are a subsolution and a supersolution of (\ref{Hessian_equation}) on $\Omega$.  Suppose that  $\underline{u}_*(z)=\overline{u}^*(z)$  on the boundary of $\Omega$.  Then the function 
$$u:=\sup\{v\in L^{\infty}(\Omega)\cap USC(\Omega): v \text{ is a subsolution of } (\ref{Hessian_equation}), \underline{u}\leq v\leq \overline{u} \}$$ 
satisfies  $u\in C^0(\Omega)$ and
$$F[u]=\psi(x,u) \quad  \text{in } \Omega,$$
is the viscosity sense. 
\end{lemma}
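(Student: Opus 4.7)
The plan is to execute Ishii's version of the Perron method (cf.\ \cite{CIL}), combining the stability statement immediately preceding this lemma (which handles pointwise suprema of subsolutions) with the comparison principle (Theorem \ref{comprison_principle}) to squeeze the upper and lower semicontinuous envelopes of the Perron candidate together. First, since $u$ is defined as a pointwise supremum of a uniformly bounded family of viscosity subsolutions, part (a) of the preceding lemma shows that the usc envelope $u^*$ is itself a viscosity subsolution. Applying Theorem \ref{comprison_principle} to $u^*$ and the supersolution $\overline u$, together with the boundary identification $\underline u_* = \overline u^*$ on $\partial \Om$, yields $u^* \leq \overline u$ on $\overline \Om$; the inequality $\underline u \leq u^*$ is automatic. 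Hence $u^*$ is itself a competitor in the class defining $u$, so $u = u^*$.

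The heart of the argument, and the expected principal obstacle, is to show that the lsc envelope $u_*$ is a viscosity supersolution. Argue by contradiction: suppose there exist $z_0 \in \Om$ and a $C^2$ lower test $q$ for $u_*$ at $z_0$ with
$$\inf_{N \geq 0} F\bigl(z_0, q(z_0), Dq(z_0), N + Hq(z_0)\bigr) > \psi\bigl(z_0, q(z_0)\bigr).$$
Replacing $q$ by $q(z) - |z-z_0|^4$ (which leaves $q$, $Dq$, $Hq$ unchanged at $z_0$) one may assume the minimum of $u_* - q$ at $z_0$ is strict while the strict inequality above persists. The case $u_*(z_0) = \overline u(z_0)$ is ruled out immediately: then $q$ would also serve as a lower test for $\overline u$ at $z_0$, contradicting that $\overline u$ is a supersolution. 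So $u_*(z_0) < \overline u(z_0)$, and using the lower semicontinuity of $\overline u$ one finds a small ball $B := B(z_0, r) \Subset \Om$ and $\delta_0 > 0$ such that $u_* - q \geq \delta_0$ on $\partial B$ and $q + \delta_0/2 < \overline u$ throughout $B$. Setting $\tilde q := q + \delta_0/2$, the joint continuity of $F$ and $\psi$ (combined with condition (\ref{proper}), which costs only an arbitrarily small amount as $\delta_0 \to 0$) ensures, for $r$ and $\delta_0$ small, that the strict inequality $F[\tilde q] > \psi(\cdot, \tilde q)$ persists throughout $B$; thus $\tilde q$ is a classical, hence viscosity, subsolution on $B$. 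Define
$$w := \begin{cases} \max\{u, \tilde q\} & \text{in } B, \\ u & \text{in } \Om \setminus B. \end{cases}$$
Since $\tilde q < u_* \leq u$ on $\partial B$, the function $w$ is usc and matches $u$ across $\partial B$; the standard ``max of two subsolutions is a subsolution'' rule shows $w$ is a viscosity subsolution, and by construction $\underline u \leq w \leq \overline u$. However $\tilde q(z_0) = u_*(z_0) + \delta_0/2 > u_*(z_0)$, so by lower semicontinuity of $u_*$ there exists $z_1 \in B$ with $u(z_1) < \tilde q(z_1) = w(z_1)$, contradicting the definition of $u$ as the supremum. Hence $u_*$ is a viscosity supersolution.

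Finally, applying Theorem \ref{comprison_principle} to the subsolution $u^*$ and the supersolution $u_*$ on $\Om$ and noting that the boundary agreement $\underline u_* = \overline u^*$ forces $u^* = u_*$ on $\partial \Om$, we obtain $u^* \leq u_*$ in $\Om$. Combined with the trivial $u_* \leq u = u^* \leq u^*$, this yields $u^* = u = u_*$ throughout $\Om$, whence $u \in C^0(\Om)$. Since $u$ is simultaneously a subsolution and a supersolution of the same equation, $u$ is a viscosity solution of $F[u] = \psi(\cdot, u)$ in $\Om$, as required.
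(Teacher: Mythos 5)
Your proof is correct and follows essentially the same Perron-method strategy as the paper: show $u^*$ is a subsolution via the preceding stability lemma, show $u_*$ is a supersolution by a bump-function contradiction argument, and conclude with the comparison principle. The one place you are more careful than the paper is in verifying that the competitor $w=\max\{u,\tilde q\}$ actually lies in the admissible class (i.e.\ $w\leq\overline u$); you do this by first disposing of the boundary case $u_*(z_0)=\overline u(z_0)$ via the supersolution property of $\overline u$, and then shrinking the ball so that $\tilde q<\overline u$ there, whereas the paper silently assumes $w$ is admissible. Your perturbation $q\mapsto q-|z-z_0|^4+\delta_0/2$ versus the paper's $q\mapsto q+b-a|z-z_0|^2$ is an immaterial variant.
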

\begin{proof}
It is straightforward  that $u^*$ is a viscosity subsolution of (\ref{Hessian_equation}).  We next prove that $u_*$ is a supersolution of (\ref{Hessian_equation}). Assume by contradiction that $u_* $ is not a supersolution of (\ref{Hessian_equation}), then there exists a point $z_0\in \Omega$ and a lower differential test $q$ for $u_*$ at $z_0$ such that
\begin{equation}
F[q](z_0) > \psi(z_0,q(z_0)).
\end{equation}
Set $\tilde q(z)=q(z)+b-a|z-z_0|^2$, where $b=(ar^2)/6$ with $a,r>0$ small enough so that $F[\tilde{q}]\geq \psi(x,\tilde{q})$ for all $|z-z_0|\leq r$. Since $u_*\geq q$ for $|z-z_0|\leq r$, we get $u^*\geq u_*>\tilde{q}$ for $r/2\leq |z-z_0|<r$. Then the function
$$w(z)=\begin{cases}
\max\{u^*(z),\tilde{q}(z)\} \text{ if } |z-z_0|\leq r,\\
 u^*(z)  \, \text{ otherwise}
\end{cases}$$
is a viscosity subsolution of (\ref{Hessian_equation}). By choosing a sequence $z_n\rightarrow z_0$ so that $u(z_n)\rightarrow u_*(z_0)$, we have $\tilde{q}(z_n)\rightarrow u_*(z_0)+b$.
Therefore, for $n$ sufficiently large, we have $w(z_n)>u(z_n)$ and this contradicts the definition of $u$. Thus we have $u_*$ is also a supersolution.  Then it follows from Theorem \ref{comprison_principle} and $\underline{u}_*(z)=\overline{u}^*(z)$  for $z\in \partial \Omega$ that  $u^*\leq u_* $ on $\Omega$, hence $u=u_*=u^*$.
\end{proof}

\section{Dirichlet problems}  
\subsection{Viscosity solutions in $\Gamma$-pseudoconvex domains}
\label{solve_1}
Let $\Omega\subset\C^n$ be a $C^2$ bounded domain. 
In this section, we study the following Dirichlet problem
\begin{equation}\label{Dirichlet_gama_pscv}
\begin{cases} F[u]=f(\lambda(Hu))=\psi(x,u) \text{ on } \Omega\\ 
 u=\f \text{ on } \partial \Omega,
 \end{cases}
\end{equation}
where $\varphi\in C^0(\partial \Omega)$ and $\psi \in C^0(\overline{\Omega}\times \R)$   such that $\psi>0$ and 

 $$ s\mapsto \psi(.,s)\, \text{ is weakly increasing}.$$ 
Let $\Gamma\subsetneq \R^n$ be an admissible cone. We assume further that $f \in C^0(\overline{\Gamma})$  satisfies:
\begin{enumerate}
\item $f$ is concave and $ f(\lambda+\mu)\geq f(\lambda),\, \,\forall \lambda\in \Gamma, \mu\in \Gamma_n$.\label{condition 1}
\item $\sup_{\partial \Gamma} f=0$, and $f>0$ in $\Gamma$. \label{condition2}

\item $f$ is homogeneous on $\G$.  \label{condition 4}
 \end{enumerate}
We remark that, the condition (\ref{condition2}) and (\ref{condition 4}) imply that for any   $\lambda \in \Gamma$ we have 
\begin{equation}
\label{condition 3}
\lim_{t\rightarrow \infty}f(t\lambda)=+\infty.
\end{equation}
We now can solve the equation (\ref{Dirichlet_gama_pscv}) in the viscosity sense:
\begin{theorem}\label{existence_general}
Let $\Omega$ be a $C^2$ bounded $\Gamma$-pseudoconvex domain in $\C^n$. The the Dirichlet problem 
$$f(\lambda[u])=\psi (x,u) \text{  in } \Omega,\quad u=\f  \text{ on } \partial \Omega.$$
admits a unique admissible solution $u\in C^0(\bar \Omega) $.

\medskip
 In particular, we have a $L^\infty$ bound for $u$ which only depends on $||\varphi||_{L^
\infty}$ and $ ||\psi(x,C)||_{L^\infty}$ and $\Omega$, where $C$ is a constant depending on $\Omega$. 
\end{theorem}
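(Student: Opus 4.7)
The plan is to apply Perron's method via Lemma \ref{Perron}, with uniqueness following from the comparison principle of Theorem \ref{comprison_principle}. Thus the whole task reduces to exhibiting an explicit subsolution $\underline u$ and supersolution $\overline u$, both in $C^0(\overline \Omega)$, both equal to $\varphi$ on $\partial \Omega$, with $\underline u \leq \overline u$ in $\Omega$.

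For the supersolution I take $\overline u := h$, the harmonic extension of $\varphi$, which lies in $C^0(\overline \Omega)$ because $\partial \Omega$ is $C^2$. The complex Hessian $Hh$ has trace $\tfrac{1}{4}\Delta h = 0$, while every admissible $\Gamma$ satisfies $\Gamma \subset \Gamma_1$, so the open cone $\Gamma$ is contained in $\{\mathrm{tr} > 0\}$. Hence $\lambda(Hh) \notin \Gamma$, which forces $f(\lambda(Hh)) = 0$ by the extension of $f$ by zero outside $\Gamma$ recalled above. For any lower differential test $q$ for $h$ at $z_0$, the matrix $N_0 := Hh(z_0) - Hq(z_0) \geq 0$ yields $\lambda(Hq(z_0) + N_0) = \lambda(Hh(z_0))$, so the infimum appearing in Definition \ref{def} is bounded above by $f(\lambda(Hh(z_0))) = 0 \leq \psi(z_0, h(z_0))$. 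This makes $h$ a viscosity supersolution.

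For the subsolution I combine the $\Gamma$-pseudoconvexity of $\Omega$ with Lemma \ref{defining_function} to obtain a $C^2$ defining function $\tilde\rho$ of $\Omega$ satisfying $\lambda(H\tilde\rho) \in \Gamma$ on all of $\overline \Omega$; by compactness $m := \inf_{\overline \Omega} f(\lambda(H\tilde\rho)) > 0$. I then set $\underline u := A\tilde\rho + h$ with $A > 0$ to be chosen. Since $\tilde\rho \leq 0$, this gives $\underline u = \varphi$ on $\partial \Omega$ and $\underline u \leq h = \overline u$ in $\Omega$. Arguing as in the concave and homogeneous case of Theorem \ref{comprison_principle} (splitting $H\underline u = A\,H\tilde\rho + Hh$, rescaling and applying $f(\lambda + \mu) \geq f(\lambda) + f(\mu)$ on $\Gamma$), one obtains a pointwise estimate of the form $f(\lambda(H\underline u)) \geq A m - C$, with $C$ depending only on $\|Hh\|_{L^\infty}$ and the modulus of $f$. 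Since $\sup_\Omega \underline u \leq \sup_\Omega h \leq \|\varphi\|_{L^\infty}$ and $\psi(z,\cdot)$ is weakly increasing, $\psi(z, \underline u(z)) \leq \|\psi(\cdot, \|\varphi\|_{L^\infty})\|_{L^\infty}$, so taking $A$ large enough makes $\underline u$ a classical, hence viscosity, subsolution.

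Lemma \ref{Perron} now produces a viscosity solution $u$ with $\underline u \leq u \leq \overline u$. As both bounds are continuous on $\overline \Omega$ and equal $\varphi$ on $\partial \Omega$, the sandwich forces $u \in C^0(\overline \Omega)$ with $u|_{\partial \Omega} = \varphi$. Uniqueness is immediate from Theorem \ref{comprison_principle} applied to any two solutions sharing the same boundary data. The resulting $L^\infty$ bound $\|u\|_{L^\infty} \leq \|\varphi\|_{L^\infty} + A\|\tilde\rho\|_{L^\infty}$ depends only on $\Omega$ (through $\tilde\rho$ and $m$), $\|\varphi\|_{L^\infty}$, and $\|\psi(\cdot, C)\|_{L^\infty}$ with $C = \|\varphi\|_{L^\infty}$, exactly as stated. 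The main technical point I anticipate is the quantitative subsolution estimate in the third step: making the large term $AH\tilde\rho$ dominate both $Hh$ and the right-hand side $\psi$ in a uniform way is where the homogeneity of $f$ on $\Gamma$, combined with strict $\Gamma$-positivity of $H\tilde\rho$ on all of $\overline \Omega$, is used in an essential way.
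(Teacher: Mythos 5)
Your proof follows the same Perron-method architecture as the paper: harmonic extension of $\varphi$ as supersolution, a defining-function barrier as subsolution, Lemma \ref{Perron} for existence, and Theorem \ref{comprison_principle} for uniqueness and for the boundary matching. The supersolution argument (trace of $Hh$ is zero, $\Gamma\subset\Gamma_1$, so $f(\lambda(Hh))=0$, and $N_0=Hh(z_0)-Hq(z_0)\geq 0$ witnesses that the infimum in Definition \ref{def} is nonpositive) is a correct and slightly more explicit rendering of the paper's remark.

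The one step where your sketch is not quite right is the subsolution estimate. You split $H\underline u = A\,H\tilde\rho + Hh$ and invoke the superadditivity $f(\lambda+\mu)\geq f(\lambda)+f(\mu)$ on $\Gamma$, but this inequality — which follows from concavity together with degree-one homogeneity of $f$ — is only valid when both eigenvalue vectors lie in $\overline\Gamma$. Since $h$ is harmonic, $\lambda(Hh)$ has zero trace and therefore lies outside $\Gamma\subset\Gamma_1$ (and generically outside $\overline\Gamma$), so superadditivity cannot be applied to this decomposition. Nor can the $Hh$ contribution be absorbed into a uniform additive error via a ``modulus of $f$'': the zero-extension of $f$ to $\R^n$ is not concave in general, and $f$ is unbounded on $\Gamma$. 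The fix, which is exactly what the paper does, is to first choose $A_1$ large enough that $A_1\tilde\rho+h$ is itself $\Gamma$-admissible — possible because $\lambda(H\tilde\rho)$ stays a uniform distance inside the open cone $\Gamma$ on the compact set $\overline\Omega$ while $Hh$ is bounded — and then write $\underline u = (A_1\tilde\rho+h) + A_2\tilde\rho$. Superadditivity applied to this decomposition gives $f(\lambda(H\underline u)) \geq f(\lambda(H(A_1\tilde\rho+h))) + A_2\,f(\lambda(H\tilde\rho)) \geq A_2 m$, and taking $A_2$ large enough to dominate $\max_{\overline\Omega}\psi(z,h)$ finishes the barrier construction. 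With that repair, your argument coincides with the paper's.
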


\begin{proof}
By Lemma \ref{defining_function}, there is a  defining  function $\rho\in C^2(\overline{\Omega})$  for  $\Omega$ such that $\lambda(H\rho)\in \Gamma$ on $\overline \Omega$. The $C^2$-smoothness of the boundary implies the existence of a harmonic function  $h$ on $\Omega$ for arbitrary given continuous boundary data $\f$. Set $$\underline{u}=(A_1\rho+ h) + A_2\rho,$$  where $A_1>0$ is chosen so that $A_1\rho+ h$ is admissible and $A_2$ will be chosen later.

\medskip
By the concavity of $f$ and (\ref{condition 3}), for $A_2$ sufficiently large we get
\begin{align*}
f(\lambda [\underline u])&\geq& \frac{1}{2} f(2\lambda[A_1\rho+h]) +\frac{1}{2} f(2A_2\lambda[\rho])\\ 
&\geq&  \max_{\overline \Omega}\psi (x,h)\geq \psi (x,\underline u).
\end{align*}
Therefore $\underline{u}$ is a subsolution of 
 (\ref{Dirichlet_gama_pscv}). 

\medskip
Since $h$ is harmonic, for each $z\in\Omega$ there is  a Hermitian matrix $N\geq 0$ so that $\lambda( N+H(h)(z))\in\partial \Gamma$. But then then $f(\lambda(N+H(h)(z) ))=0$. Therefore, $ \bar v:=h$ is a supersolution of (\ref{Dirichlet_gama_pscv}).

\medskip 
Finally, the existence of solution follows from Perron's method. We set
$$u:=\sup\{w \text{ is subsolution of } (\ref{Dirichlet_gama_pscv}) \text{ on } \Omega, \underline{u}\leq w\leq \overline{v} \}.$$
As in the argument from Lemma \ref{Perron} we have $u^*$ (resp. $u_*$) is a subsolution (resp. supersolution) of (\ref{Dirichlet_gama_pscv}). It follows from the comparison principle (Theorem \ref{comprison_principle}) that   
$$u^*(z)-u_*(z)\leq \limsup_{w\rightarrow \partial \Omega} (u^*-u_*)^+(w).$$
Since  $\underline{u}$ and $\overline{v}$ are continuous and $\underline{u}=\overline{v}=\varphi$ on $\partial \Omega$ we infer that $u^*\leq u_*$ on $\Omega$ and $u^*=u_*$ on $\partial \Omega$. 
Therefore $u=u^*=u_*$ is a viscosity solution of (\ref{Dirichlet_gama_pscv}). The uniqueness follows from the comparison principle (Theorem \ref{comprison_principle}). 
\end{proof}

 As a corollary of Theorem \ref{existence_general},  we  solve the following Dirichlet problem for Hessian quotient equations
\begin{equation}\label{Hessian_quotient1}
\begin{cases}
S_{k,\ell}(\lambda(Hu)):=\frac{S_k}{S_\ell}(\lambda(Hu))=\psi(x,u)\quad \text{on } \Omega\\
u=\f \quad \text{on } \partial \Omega 
\end{cases},
\end{equation}
where  $\Omega\subset \C^n$ be a smooth  bounded $\Gamma_k$-pseudoconvex domain, $1\leq \ell<k\leq n$ and 
$$ S_k(\lambda (Hu))={(dd^c u)^k \wedge \omega^{n-k}\over \omega^n}.$$

Note that the operator $S_{k,\ell}^{1/(k-l)}$ is concave and homogeneous (see \cite{Sp}).

\begin{corollary}
The Dirichlet problem (\ref{Hessian_quotient1}) admits a unique viscosity solution $u\in C^0(\bar \Omega)$ for any continuous data $\f$. 
\end{corollary}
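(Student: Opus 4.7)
The plan is to reduce the equation to the form treated by Theorem \ref{existence_general} with the admissible cone $\Gamma=\Gamma_k$. Since $\psi>0$, for any $\Gamma_k$-admissible $u$ the equation (\ref{Hessian_quotient1}) is equivalent to
\[
f(\lambda(Hu))=\tilde\psi(x,u),\qquad f:=S_{k,\ell}^{1/(k-\ell)},\quad \tilde\psi:=\psi^{1/(k-\ell)}.
\]
Taking the $(k-\ell)$-th root preserves continuity, strict positivity and the weakly increasing dependence on $u$, so $\tilde\psi$ satisfies the structural hypotheses required in Theorem \ref{existence_general}. Following the convention from Section 4, I would extend $f$ by zero outside $\overline{\Gamma_k}$, which legitimizes the viscosity formulation at Hessians on or beyond $\partial\Gamma_k$.

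The bulk of the work is to verify that $f$ satisfies conditions (\ref{condition 1})--(\ref{condition 4}) on $\Gamma_k$. Homogeneity of degree one, condition (\ref{condition 4}), is immediate from the definitions of $S_k$ and $S_\ell$. Concavity of $f$ on $\Gamma_k$ is the classical theorem of Spruck \cite{Sp} cited in the paragraph preceding the corollary; monotonicity under addition of vectors from $\Gamma_n$ then follows formally from concavity, $1$-homogeneity, and the Euler identity $\sum_i \lambda_i f_i = f>0$ on $\Gamma_k$, giving condition (\ref{condition 1}). For condition (\ref{condition2}), positivity of $f$ on $\Gamma_k$ is clear since $S_k$ and $S_\ell$ are both strictly positive there, whereas continuity at $\partial \Gamma_k$ with boundary value zero is a standard consequence of the Newton--Maclaurin inequality $S_k^{1/k}\le S_\ell^{1/\ell}$ valid on $\overline{\Gamma_k}$.

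With (\ref{condition 1})--(\ref{condition 4}) established and $\Omega$ already assumed $\Gamma_k$-pseudoconvex, Theorem \ref{existence_general} produces a unique $u\in C^0(\overline\Omega)$ solving $f(\lambda(Hu))=\tilde\psi(x,u)$ with $u=\f$ on $\partial\Omega$; by the equivalence above this is exactly the desired viscosity solution of (\ref{Hessian_quotient1}), and uniqueness is inherited from Theorem \ref{comprison_principle}. The only delicate point I anticipate is confirming that $f$ truly extends continuously to $\overline{\Gamma_k}$ with zero boundary values at degenerate points where $S_k$ and $S_\ell$ vanish simultaneously; this is well known but requires a careful application of Newton--Maclaurin along the boundary strata of $\Gamma_k$, and is the one step I would write out in detail if carrying the argument through.
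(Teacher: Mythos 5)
Your proposal is correct and takes essentially the same route as the paper, which does not write out a proof but simply remarks, just before the corollary, that $S_{k,\ell}^{1/(k-\ell)}$ is concave and homogeneous (citing \cite{Sp}) and then invokes Theorem \ref{existence_general} with $\Gamma=\Gamma_k$. You have merely filled in the verification of conditions (\ref{condition 1})--(\ref{condition 4}), including the passage to $\tilde\psi=\psi^{1/(k-\ell)}$ and the vanishing of $f$ on $\partial\Gamma_k$, all of which is sound.
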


We also remark that a  viscosity subsolution  is always a $\Gamma$-subharmonic function. 

\begin{lemma}\label{viscosity_pluripotential}
Any viscosity subsolution of the equation
$f(\lambda(Hu))=\psi(z,u)$
is a $\Gamma$-subharmonic function.  In particular, if $u$ is a
viscosity subsolution of the equation
\begin{equation}
S_{k,\ell}(\lambda(Hu))=\psi(z,u),
\end{equation} 
then $u$  is $k$-subharmonic.
\end{lemma}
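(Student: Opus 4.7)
The plan is to reduce the statement to a direct unpacking of the viscosity definition, exploiting the two structural assumptions in force throughout Section 4.1: the positivity $\psi>0$, and the convention that $f$ has been extended continuously to all of $\R^n$ by $f\equiv 0$ on $\R^n\setminus \Gamma$ (with $f>0$ on $\Gamma$ and $f=0$ on $\partial\Gamma$). First I would fix $z_0\in\Omega$ and an arbitrary upper differential test $q$ for $u$ at $z_0$. By the defining viscosity inequality,
\[
f(\lambda(Hq(z_0)))\geq \psi(z_0,u(z_0))>0.
\]
Because the extended $f$ vanishes identically off $\Gamma$, this strict positivity forces $\lambda(Hq(z_0))\in\Gamma\subset\overline{\Gamma}$. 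Combined with the upper semicontinuity of $u$ that comes for free from the definition of a viscosity subsolution, this is exactly the condition defining $\Gamma$-subharmonicity in the viscosity sense (and in fact shows strict $\Gamma$-subharmonicity).

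For the specialization to Hessian quotients I would apply the first part with $f=S_{k,\ell}^{1/(k-\ell)}$ (equivalently, after reading off the correct root, $f=S_{k,\ell}$); both satisfy the structural hypotheses of the section on the cone $\Gamma_k$, which is their natural ellipticity cone since $S_\ell>0$ on $\Gamma_k$ and $S_k$ vanishes on $\partial\Gamma_k$. The first part then yields that $u$ is $\Gamma_k$-subharmonic in the viscosity sense. As already recalled in the $\Gamma$-subharmonic subsection of Section 2, this is in turn equivalent to $u$ being a viscosity subsolution of $S_k(\lambda(Hu))=0$, and the results of Eyssidieux–Guedj–Zeriahi \cite{EGZ} and Lu \cite{Lu} identify this viscosity class with Blocki's pluripotential class of $k$-subharmonic functions, giving the conclusion.

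The proof is essentially tautological once the definitions are lined up, so there is no real analytic obstacle; the only points requiring care are the bookkeeping that $\psi>0$ is a standing hypothesis (which justifies the strict inequality that excludes $\lambda(Hq(z_0))\in\R^n\setminus\Gamma$) and the verification that the ellipticity cone attached to $S_{k,\ell}$ is indeed $\Gamma_k$. Both are immediate from the material already collected in Section 2.
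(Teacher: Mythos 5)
Your proof is correct and captures the same mechanism as the paper's: the viscosity subsolution inequality together with $\psi>0$ forces $f(\lambda(Hq(z_0)))>0$, which, since $f$ vanishes outside $\Gamma$, pins $\lambda(Hq(z_0))$ in $\Gamma\subset\overline{\Gamma}$. The paper packages the final step slightly differently, adding a semipositive matrix $N$ to the test so that the perturbed Hessian eigenvalues land on $\partial\Gamma$ and then invoking $f|_{\partial\Gamma}=0$, but given the standing extension-by-zero convention this is logically equivalent to your more direct observation.
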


\begin{proof}
Let $z_0\in \Omega$ and $q\in C_{loc}^2(\{z_0\})$, such that $u -q$ attains its maximum at $z_0$ and $u(z_0)= q(z_0)$.  By definition we have
$$f(\lambda(Hq)(z_0))> 0. $$
Observe that for any semi-positive Hermitian maxtrix $N$, the function
$$q_N(z):=q(z) + \left\langle  N(z-z_0),z-z_0  \right\rangle$$
is also an upper test function for $ u$ at $z_0$. By the definition of viscosity subsolutions we have
\begin{equation}
\label{q_tilde}
f(\lambda(H\tilde{q} )(z_0))> 0.
\end{equation}
Suppose that  $\lambda(H q)(z_0)\notin \bar \Gamma$. Then we can find $N\geq 0$ so that  $\lambda(H\tilde{q})(z_0)\in \partial\Gamma$, so $f(\lambda(H\tilde{q})(z_0))=0$ by the condition (3) above, this contradicts to (\ref{q_tilde}). Hence we always have $\lambda[q] (z_0) \geq 0$, and so $u$ is $\Gamma$-subharmonic. 
\end{proof}

\subsection{H\"older continuity  of Hessian type  equations}
In this subsection, we study the H\"older continuity of the viscosity solution obtained in Section \ref{solve_1} to  the Dirichlet problem \begin{equation}\label{Dirichlet_gama_pscv_2}
\begin{cases} F[u]=f(\lambda(Hu))=\psi(x,u) \text{ on } \Omega\\ 
 u=\f \text{ on } \partial \Omega,
 \end{cases}
\end{equation}
where $f,\varphi$ and $\psi$ satisfy the  conditions spelled out in the previous subsection. 
We prove the following result:
\begin{theorem} Let $\Om$ be a strictly $\G$ pseudoconvex domain. Let $u$ be the viscosity solution of (\ref{Dirichlet_gama_pscv_2}). Suppose that $\varphi \in C^{2\al}({\pa\Om})$ for some $\al\in(0,1)$. If additionally $\psi (z,s)$ satisfies
\begin{enumerate}
\item $|\psi(z,s)|\leq M_1(s)$ for some $L^{\infty}_{loc}$ function $M_1$;
\item $|\psi(z,s)-\psi(w,s)|\leq M_2(s)|z-w|^{\al}$ for some $L^{\infty}_{loc}$ function $M_2$;
\end{enumerate} 
Then $u\in C^{\al}({\overline\Om})$. 
\end{theorem}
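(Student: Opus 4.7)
The plan is to establish $u \in C^{0,\alpha}(\overline{\Omega})$ in two steps: (i) a boundary H\"older estimate produced by barriers, and (ii) propagation into the interior via translation and the comparison principle (Theorem \ref{comprison_principle}). The key algebraic input throughout is the superadditivity inequality
\[
f(\lambda(A+B)) \geq f(\lambda(A)) + f(\lambda(B))
\]
for $\Gamma$-positive Hermitian $A, B$, which is an immediate consequence of the concavity and degree-one homogeneity of $f$ together with the classical fact that $A \mapsto f(\lambda(A))$ is concave on the cone of $\Gamma$-positive matrices.

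For the boundary estimate, extend $\varphi$ to $\tilde\varphi \in C^{0,2\alpha}(\overline\Omega)$ and let $\rho \in C^2(\overline\Omega)$ be the strictly $\Gamma$-subharmonic defining function from Lemma \ref{defining_function}, so that $f(\lambda(H\rho)) \geq c_0 > 0$. Let $\tilde\varphi_\delta$ denote the standard mollification, with $\|\tilde\varphi_\delta - \tilde\varphi\|_\infty \leq C_1\delta^{2\alpha}$ and $\|D^2 \tilde\varphi_\delta\|_\infty \leq C_1\delta^{2\alpha-2}$, and set
\[
\underline{w}_\delta(z) := \tilde\varphi_\delta(z) - C_1\delta^{2\alpha} + C_2 \delta^{2\alpha-2} \rho(z).
\]
For $C_2$ sufficiently large, the Hessian of $\underline w_\delta$ is $\Gamma$-positive, and by homogeneity $f(\lambda(H\underline w_\delta)) \geq (c_0/2)C_2\delta^{2\alpha-2}$, which exceeds $\sup_\Omega \psi(\cdot, \underline w_\delta)$ once $\delta$ is small; hence $\underline w_\delta$ is a viscosity subsolution of (\ref{Dirichlet_gama_pscv_2}). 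Since $\underline w_\delta \leq \varphi$ on $\partial\Omega$, the comparison principle gives $\underline w_\delta \leq u$ on $\overline\Omega$. Optimizing $\delta = |z-z_0|^{1/2}$ balances the leading terms and yields $u(z) - \varphi(z_0) \geq -C|z-z_0|^\alpha$ for $z_0 \in \partial\Omega$. The matching upper bound follows by comparing $u$ with the harmonic extension $h$ of $\varphi$, which is a supersolution (as in Theorem \ref{existence_general}) and is $C^{0,2\alpha}$ up to the boundary by classical elliptic theory.

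For the interior estimate, fix small $h \in \C^n$, set $\Omega_h := \Omega \cap (\Omega - h)$ and let $u_h(z) := u(z+h)$, which solves $f(\lambda(Hu_h)) = \psi(z+h, u_h)$ on $\Omega_h$. With $\epsilon := (M_2/c_0)|h|^\alpha$ and a large constant $K$ to be fixed, introduce
\[
v(z) := u_h(z) + \epsilon \rho(z) - K|h|^\alpha.
\]
If $q$ is an upper test for $v$ at $z_0$, then $q - \epsilon\rho$ is an upper test for $u_h$ at $z_0$; combining the viscosity subsolution inequality for $u_h$ with superadditivity gives
\[
f(\lambda(Hq)(z_0)) \geq f(\lambda(H(q - \epsilon\rho))(z_0)) + \epsilon f(\lambda(H\rho)(z_0)) \geq \psi(z_0+h,u_h(z_0)) + \epsilon c_0,
\]
which by assumption (2) on $\psi$, monotonicity of $\psi$ in $s$, and the choice of $\epsilon$, dominates $\psi(z_0, v(z_0))$. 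Thus $v$ is a viscosity subsolution in $\Omega_h$. On $\partial\Omega_h$, one of $z,\ z+h$ lies in $\partial\Omega$; by the boundary estimate from the previous step, $|u(z+h) - u(z)| \leq C|h|^\alpha$, so picking $K$ large ensures $v \leq u$ on $\partial\Omega_h$. The comparison principle then propagates $v \leq u$ into $\Omega_h$, giving $u(z+h) - u(z) \leq (K + \epsilon\|\rho\|_\infty)|h|^\alpha \leq C'|h|^\alpha$, and swapping $h \leftrightarrow -h$ completes the proof.

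The principal technical obstacle will be the correct execution of superadditivity and strictification at the level of viscosity tests: one must verify that $\lambda(H(q-\epsilon\rho))$ lies in $\overline\Gamma$ whenever the subsolution inequality for $u_h$ is non-vacuous, and that the chain of inequalities survives the passage from smooth upper tests of $v$ to those of $u_h$. Once these structural checks are in place, the comparison principle (Theorem \ref{comprison_principle}) does the rest.
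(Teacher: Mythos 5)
Your proof is correct and reaches the same conclusion, but the route differs from the paper's in a way worth noting. For the \emph{boundary} estimate, you mollify an extension of $\varphi$ and build the one-parameter family of barriers $\underline w_\delta = \tilde\varphi_\delta - C_1\delta^{2\alpha} + C_2\delta^{2\alpha-2}\rho$, then optimize $\delta\sim|z-z_0|^{1/2}$. The paper instead uses a Charabati-type local barrier $h_\xi = -\tilde C\bigl(-C\rho+|z-\xi|^2\bigr)^\alpha$ at each $\xi\in\partial\Omega$ and globalizes by a balayage $\sup_\xi\{a h_\xi+\varphi(\xi)\}$. Your mollification device sidesteps the need to verify that a fractional power of a $\Gamma$-subharmonic function is again $\Gamma$-subharmonic (the inequality (\ref{hing}) in the paper), at the modest cost of extending $\varphi$ off the boundary and tracking a uniform lower bound $c_0>0$ for $f(\lambda(H\rho))$ on $\overline\Omega$; both are entirely routine. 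For the \emph{interior} estimate, the two arguments are essentially identical: translate by $h$, add a small multiple of a smooth function with Hessian eigenvalues in $\Gamma$ to strictify the subsolution inequality, then compare. You use $\epsilon\rho$ where the paper uses $K_2|\tau|^\alpha|z|^2$, but these play the same role; the displayed superadditivity $f(\lambda(A+B))\ge f(\lambda(A))+f(\lambda(B))$ is the same concavity-plus-homogeneity fact the paper invokes. The structural point you flag at the end — that $\lambda(H(q-\epsilon\rho)(z_0))\in\overline\Gamma$ — is indeed the one to check, and it follows exactly as you suspect from Lemma \ref{viscosity_pluripotential}, since $q-\epsilon\rho$ is an upper test for the subsolution $u_h$. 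The only loose ends are quantifier-level (uniformity of the $\Gamma$-positivity of $H\underline w_\delta$ over $\overline\Omega$, and replacing $M_2(s)$ by a uniform bound over the range of $u$), and you handle or acknowledge both.
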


{\bf Remark}. Classical examples (see \cite{BT1}) show that the claimed regularity cannot be improved. Conditions 1 and 2 can be regarded as a weak growth conditions and seem to be optimal. If $\psi$ does not depend on the second variable then these conditions mean that $\psi$ is globally bounded and contained in $C^{\al}$.
\begin{proof}
The proof relies on the classical idea of Walsh-\cite{Wa}. Similar agrument was used by Bedford and Taylor- \cite{BT1} who dealt with the complex Monge-Amp\`ere operator. We shall apply a small adjustment in the construction of the local barriers which is due to Charabati \cite{Cha}.

Suppose for definiteness that $0\in\Om$.
Assume without loss of generality that  the $\G$-subharmonic function $\rho=-dist(z,\pa\Om)+C_{\Om}dist(z,\pa\Om)^2$ satisfies $F(\rho)\geq2$ (multiply $\rho$ by a constant if necessary and exploit the homogeneity of $F$). Recall $\rho$ vanishes on $\pa\Om$. As $\pa\Om\in C^2$ we know that $\rho\in C^{2}$ near the boundary. Then it is easy to find a continuation of $\rho$  in the interior of $\Omega$ (still denoted by $\rho$), so that $\rho$ is  $\G$-subharmonic and satisfies $F(\rho)\geq1$. 

Fix $\xi\in\pa\Om$. There is a uniform $C>>1$ (dependent on $\Om$, but independent on $\xi$) such that  the function
$$\gk(z):=C\rho(z)-|z-\xi|^2$$
is $\G$-sh. In particular $\gk\leq 0$ in $\overline{\Om}$.

By definition there is a constant $\tilde{C}$, such that for any $z\in\pa\Om$
$$\varphi(z)\geq \varphi(\xi)-\tilde{C}|z-\xi|^{2\al}.$$
 Consider the function $h_{\xi}(z):=-\tilde{C}(-\gk(z))^\al$. Then
\begin{equation}\label{hing}
H(h_{\xi}(z))\geq \tilde{C}\al(1-\al)(-\gk(z))^{\al-2}H(\gk(z)),
\end{equation}
where $\lambda(H(\gk(z)))\in \G$,
thus $h_{\xi}$ is $\G$-subharmonic.

Observe that 
$$h_{\xi}(z)\leq -\tilde{C}|z-\xi|^{2\al}\leq \varphi(z)-\varphi(\xi).$$
Thus $h_{\xi}(z)+\varphi(\xi)$ are local boundary barriers constructed following the method of Charabati from \cite{Cha} (in the paper \cite{BT1}, where the Monge-Amp\`ere case was considered, $h_{\xi}$ was simply chosen as $-(x_n)^\alpha$ in a suitable coordinate system, but this is not possible in the general case).

At this stage we recall that $u$ is bounded a priori by Theorem \ref{existence_general}. Hence we know that for some uniform constant $A$ one has $F[u]\leq A$ in the viscosity sense.

From the gathered information one can produce a global barrier for $u$ in a standard way (see \cite{BT1}). Indeed, consider the function $\tilde{h}(z):=sup_{\xi}\lbrace ah_{\xi}(z)+\varphi(\xi)\rbrace$ for a large but uniform constant $a$. Using the balayage procedure it is easy to show that $F(\tilde{h}(z))\geq A$ in the viscosity sense once $a$ is taken large enough. Thus $\tilde{h}$ majorizes $u$ by the comparison principle and so is a global barrier for $u$ matching the boundary data given by $\varphi$. By construction $\tilde{h}$ is globally $\al$-H\"older continuous.

Note on the other hand that $u$ is subharmonic as $\G\subset\G_1$, thus the harmonic extenstion $u_{\varphi}$ of $\varphi$ in $\Om$ majorizes $u$ from above. Recall that $u_{\varphi}$ is $\al$-H\"older continuous by classical elliptic regularity.

Coupling the information for both the lower and the upper barrier one obtains

\begin{equation}\label{bdry}
\forall z\in\overline\Om,\ \forall \xi\in\pa\Om\ \ |u(z)-u(\xi)|\leq K|z-\xi|^{\al}
\end{equation}

Denote by $K_1$ the quantity $K_2diam^2(\Om)max\lbrace1,  f({\bf 1})\rbrace+K$, where   ${\bf 1}=(1,\ldots,1)\in \R^n$ is the vector of the eigenvalues of the identity matrix, while $K_2:=\tilde{C} f({\bf 1})^{-1}$ and finally $\tilde{C}$ is the $\al$-Lipschitz constant of $\psi$. Consider for a small vector $\tau\in\C^n$ the function

$$v(z):=u(z+\tau)+K_2|\tau|^\al|z|^2-K_1|\tau|^\al$$
defined over $\Om_{\tau}:=\lbrace z\in\Om| z+\tau\in\Om\rbrace$.

It is easy to see by  using the barriers that if  $z+\tau\in\pa\Om$ or $z\in\pa\Om$ then

$$v(z)\leq u(z)+K|\tau|^{\al}+K_2diam^2{\Om}|\tau|^\al-K_1|\tau|^{\al}\leq u(z).$$

We now claim that $v(z)\leq u(z)$  in $\Om_{\tau}$. By the previous inequality this holds on $\pa(\Om_{\tau})$. Suppose the claim is false and consider the open subdomain $U$of $\Om_\tau$ defined by $U_{\tau}=\lbrace z\in\Om_{\tau}|\ v(z)>u(z)\rbrace$.

We will now prove that $v$ is a subsolution to $F[u]=f(\lambda(Hu))=\psi (z,u(z))$ in $U$. To this end pick a point $z_0$ and an upper differential test $q$ for $v$ at $z_0$. Observe then that $\tilde{q}(z):=q(z)-K_2|\tau|^\al|z|^2-K_1|\tau|^\al$ is then an upper differential test for $u(\tau+. )$ at the point $z_0$. Hence  
\begin{eqnarray*}
F[q (z_0)]&=&f(\la( H\tilde{q}(z_0))+K_2|\tau|^\al {\bf 1})\\
&\geq&f(\la(H\tilde{q}(z_0))+K_2|\tau|^\al f({\bf 1}) \\
&\geq& \psi(z_0+\tau, u(z_0+\tau))+K_2|\tau|^\al f({\bf 1}),
\end{eqnarray*}
where we have used the concavity and homogeneity of $f$ in the first inequality and the fact that $\tilde{q}$ is an upper differential test for $u(\tau+.)$ for the second one.

Next
\begin{eqnarray*}
&&\psi(z_0+\tau, u(z_0+\tau))+K_2|\tau|^\al  f({\bf 1})\\
&&\quad\geq  \psi(z_0+\tau, u(z_0+\tau)+K_2|\tau|^\al|z_0|^2-K_1|\tau|^\al) +K_2|\tau|^\al  f({\bf 1})\\
&&\quad =\psi (z_0+\tau,v(z_0))+K_2|\tau|^\al  f({\bf 1})\\
&&\quad \geq \psi (z_0+\tau, u(z_0))+K_2|\tau|^\al f({\bf 1}),
\end{eqnarray*}
where we have exploited twice the monotonicity of $\psi$ with respect to the second variable (and the fact that $z_0\in U_{\tau}$).
 
Exploiting now the H\"older continuity of $\psi$ with respect to the first variable we obtain
$$ \psi(z_0+\tau, u(z_0+\tau))+K_2|\tau|^\al  f({\bf 1})\geq \psi (z_0+\tau, u(z_0))+K_2|\tau|^\al  f({\bf 1})\geq \psi(z_0,u(z_0)).$$
This proves that $F[q(z_0)]\geq  \psi (z_0,u(z_0))$ and hence $F[v(z)]\geq  \psi(z,v(z))$ in the viscosity sense.

Thus over $U_{\tau}$, $v$ is subsolution and $u$ is a solution, which implies by comparison principle  that $v\leq u$ there, a contradiction unless the set  $U_{\tau}$ is empty.

We have thus proven that
$$\forall z\in\Om_{\tau}\ \ u(z+\tau)+K_2|\tau|^\al|z|^2-K_1|\tau|^\al\leq u(z),$$
which  implies the claimed $\alpha$- H\"older continuity.
\end{proof}

\section{Viscosity vs. pluripotential solutions}
  Let $\Omega$ be a bounded smooth strictly pseudoconvex domain in $\C^n$. Let $0<\psi\in C(\bar{\Omega}\times\R)$ be a continuous function
non-decreasing in the last variable.
 In this section, we study the 
relations between viscosity concepts with respect to the inverse $\sigma_k$ equations
\begin{equation}\label{vis.relation.eq}
\dfrac{(dd^cu)^n}{(dd^cu)^{n-k}\wedge\omega^k}= \psi (z,u) \quad\mbox{in}\quad\Omega,
\end{equation}
and pluripotential concepts  with respect to the equation
\begin{equation}\label{pp.relation.eq}
(dd^cu)^n= \psi(z,u)(dd^cu)^{n-k}\wedge\omega^k\quad\mbox{in}\quad\Omega.
\end{equation}
For the regular case, the following result was shown in \cite{GS}:
\begin{theorem}[Guan-Sun]\label{GS.the}
	Let $0<h\in C^{\infty}(\bar{\Omega})$ and $\varphi\in C^{\infty}(\partial\Omega)$. Then, there exists a smooth strictly
	plurisubharmonic function $u$ in $\bar{\Omega}$ such that
	\begin{equation}
	\dfrac{(dd^cu)^n}{(dd^cu)^{n-k}\wedge\omega^k}= h (z) \quad\mbox{in}\quad\Omega,\qquad
	u=\varphi\quad\mbox{in}\quad\partial\Omega.
	\end{equation}
\end{theorem}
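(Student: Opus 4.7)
The plan is to solve the Dirichlet problem via the classical continuity method, exploiting the fact that the operator
\[
F(u) := \left(\frac{(dd^c u)^n}{(dd^c u)^{n-k}\wedge \omega^k}\right)^{1/k}
\]
is concave, degree-one homogeneous, and elliptic on the cone of strictly plurisubharmonic functions. Rewriting the equation as $F(u) = h^{1/k}$, I would connect it to a model equation solved by a known strictly plurisubharmonic function (for instance, first scale the boundary data so that the harmonic extension is dominated by a suitable strictly plurisubharmonic barrier, then use the family $F(u_t) = (1-t) F(\underline{u}) + t\, h^{1/k}$ for $t\in[0,1]$, with $\underline{u} = A\rho + \tilde h$ where $\rho$ is a strictly plurisubharmonic defining function of $\Omega$ and $\tilde h$ is the harmonic extension of $\varphi$, $A\gg 1$ being chosen so that $\underline u$ is a subsolution and matches $\varphi$ on $\partial\Omega$). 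Openness along the path follows from the implicit function theorem in $C^{2,\alpha}$, since the linearization of $F$ at a smooth strictly plurisubharmonic admissible solution is a uniformly elliptic linear operator with vanishing zeroth order term, so invertible on $C^{2,\alpha}_0$.

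Closedness, which is the heart of the proof, reduces to uniform $C^{2,\alpha}(\bar\Omega)$ a priori estimates along the path. The $C^0$ bound follows by comparison with $\underline u$ and the harmonic extension $\tilde h$ (using that any strictly plurisubharmonic solution $u$ satisfies $\underline u\le u\le \tilde h$ by the classical maximum principle applied to $F$). For the gradient bound one differentiates the equation once, applies the maximum principle in the interior, and uses the subsolution $\underline u$ together with $\tilde h$ as boundary barriers to control $|Du|$ on $\partial\Omega$. The interior $C^2$ estimate follows from a Pogorelov-type computation: differentiate the equation $\log F(u) = \tfrac{1}{k}\log h$ twice, use concavity of $\log F$ to absorb the quadratic terms in the Hessian derivatives, and cut off with a power of $-\rho$ or of $(\underline u - u)$.

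The main obstacle, as in all Hessian-type Dirichlet problems, is the boundary $C^2$ estimate, and within that the double-normal second derivative. The tangential-tangential estimate follows from differentiating the boundary condition $u=\varphi$ along the boundary and using the $C^0$ and $C^1$ bounds. The tangential-normal estimate is obtained, as in Caffarelli-Nirenberg-Spruck and later Guan, by constructing a barrier of the form $v = \underline u + t\rho - N\rho^2 + $ (tangential correction) and applying the operator linearized at $u$; here one needs a Jacobi-type inequality relating $\sum F^{i\bar j}$ to $F(u)$ (which holds thanks to concavity and homogeneity of $F$). The double-normal bound reduces, via the structure of the equation restricted to $\partial \Omega$, to verifying that a certain geometric quantity built from the tangential Hessian of $u|_{\partial\Omega}$ stays in the interior of the admissible set; this is where the strict pseudoconvexity of $\partial \Omega$ and the existence of the strict subsolution $\underline u$ enter crucially, and it is the step I expect to require the most delicate analysis.

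Once the full $C^2$ estimate is in hand, concavity of the operator and the Evans-Krylov theorem (applied after a standard real-variable reduction treating the Hermitian Hessian as a real symmetric $2n\times 2n$ matrix) yield interior $C^{2,\alpha}$ estimates, while boundary $C^{2,\alpha}$ estimates follow from Krylov's boundary theory in view of the $C^\infty$ smoothness of $\partial \Omega$ and $\varphi$. Schauder bootstrapping then upgrades the solution to $C^\infty(\bar\Omega)$, closing the continuity method and producing the desired smooth strictly plurisubharmonic solution.
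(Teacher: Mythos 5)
The paper does not prove this theorem: it is quoted as an external result from Guan--Sun \cite{GS} (``On a class of fully nonlinear elliptic equations on Hermitian manifolds''), so there is no in-text argument to compare against and the paper's ``approach'' is simply a citation. Your continuity-method outline is nevertheless the standard strategy for such Dirichlet problems and is consistent with what one finds in \cite{GS} and its predecessors (Caffarelli--Nirenberg--Spruck \cite{CNS}, Li \cite{Li}, Spruck \cite{Sp}). The structural facts you invoke are correct: the normalized operator $\bigl(S_n/S_{n-k}\bigr)^{1/k}$ is concave, degree-one homogeneous and degenerate elliptic on $\Gamma_n$, and the estimate ladder ($C^0$ bound squeezed between $\underline u = A\rho+\tilde h$ and the harmonic extension $\tilde h$; gradient bound via interior maximum principle and boundary barriers; interior $C^2$ via a Pogorelov-type computation; boundary $C^2$ split into tangential--tangential, mixed, and double-normal; Evans--Krylov and Schauder bootstrapping) is exactly the right architecture, and you correctly identify the double-normal estimate as the crux. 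Two small cautions are in order for a fully rigorous write-up. For openness you claim invertibility of the linearization ``since the zeroth-order term vanishes''; injectivity does follow from the Hopf maximum principle, but surjectivity on $C^{2,\alpha}_0$ must additionally be obtained from the Fredholm alternative or the standard linear Schauder solvability theory. For the double-normal step, the operative structural condition is that $f(\lambda',R)\to +\infty$ (or at least exceeds $\sup h^{1/k}$) as $R\to\infty$ uniformly for $\lambda'$ in the relevant compact subset of the $(n-1)$-dimensional projected cone; the strict subsolution $\underline u$ is what allows one to place the tangential eigenvalues of $Hu$ in such a compact set, and it is this dichotomy (either the tangential Hessian is already large enough or the subsolution forces it into a compact set) that closes the argument --- you gesture at this but do not state it. Neither is a fatal gap given the level of detail of your sketch, and the plan would carry through.
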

Note that the function $u$ in Theorem \ref{GS.the} is a viscosity solution of \eqref{vis.relation.eq} 
 in the case when $\psi(z,u)=h(z)$. Using Theorem \ref{GS.the}, we obtain 
\begin{proposition}\label{vissol.the}
	If $u\in C(\bar \Omega)\cap PSH(\Omega)$ is a viscosity solution of \eqref{vis.relation.eq} then 
	there exists a sequence of smooth plurisubharmonic functions $u_j$ in $\Omega$ such that $u_j$ is decreasing to $u$
	and the function $\dfrac{(dd^c u_j)^n}{(dd^cu_j)^{n-k}\wedge\omega^k}$ converges uniformly to $\psi (z,u)$ as $j\rightarrow\infty$.
	In particular, $u$ is a solution of \eqref{pp.relation.eq} in the pluripotential sense.
\end{proposition}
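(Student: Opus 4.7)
The plan is to approximate $u$ from above by smooth classical solutions of modified Dirichlet problems furnished by Guan-Sun's Theorem \ref{GS.the}, show via the comparison principle that these form a decreasing sequence converging uniformly to $u$, and then pass to the weak limit using Bedford-Taylor continuity. Write $\varphi:=u|_{\partial\Omega}\in C^0(\partial\Omega)$ and $h(z):=\psi(z,u(z))\in C^0(\bar\Omega)$, which is positive by assumption. By mollification combined with a small monotone additive adjustment I would produce smooth data $\varphi_j\in C^\infty(\partial\Omega)$ with $\varphi_{j+1}\le\varphi_j$ and $\varphi_j\searrow\varphi$ uniformly, and $h_j\in C^\infty(\bar\Omega)$ with $0<h_j\le h_{j+1}\le h$ and $h_j\nearrow h$ uniformly. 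Theorem \ref{GS.the} then produces smooth strictly plurisubharmonic $u_j\in C^\infty(\bar\Omega)$ solving
\[
F[u_j]:=\frac{(dd^cu_j)^n}{(dd^cu_j)^{n-k}\wedge\omega^k}=h_j \text{ in }\Omega,\qquad u_j|_{\partial\Omega}=\varphi_j.
\]
Since $F^{1/k}$ is concave and $1$-homogeneous on the cone of positive Hermitian matrices (condition (\ref{condition_comparison2}) is satisfied for the equivalent equation $F^{1/k}[v]=h^{1/k}$), the comparison principle of Theorem \ref{comprison_principle} applies.

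Two applications of this comparison give the key inequalities. Since $F[u_j]=h_j\le h_{j+1}$, $u_j$ is a viscosity supersolution of $F[v]=h_{j+1}$ while $u_{j+1}$ is the solution, so with $\varphi_j\ge\varphi_{j+1}$ comparison yields $u_{j+1}\le u_j$. Since $F[u]=h\ge h_j$, $u$ is a viscosity subsolution of $F[v]=h_j$, and with $\varphi\le\varphi_j$ comparison gives $u\le u_j$. Thus $\{u_j\}$ is a decreasing sequence of smooth plurisubharmonic functions bounded below by $u$, with pointwise limit $u^*\ge u$ that is upper semicontinuous, plurisubharmonic in $\Omega$, and equal to $\varphi$ on $\partial\Omega$. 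To identify $u^*=u$ I invoke the standard viscosity stability for monotone limits: using the strict-test perturbations $q\mapsto q\pm\varepsilon|z-z_0|^2$ to force the localization points of $u_j-q$ (resp.\ $q-u_j$) to converge to $z_0$, and using uniform convergence $h_j\to h$, one verifies that $u^*$ is simultaneously a viscosity sub- and supersolution of $F[v]=h=\psi(z,u(z))$ with boundary value $\varphi$. The uniqueness half of Theorem \ref{comprison_principle} then forces $u^*=u$, and Dini's theorem upgrades the monotone convergence to uniform convergence $u_j\searrow u$ on $\bar\Omega$; by construction $F[u_j]=h_j\to\psi(z,u)$ uniformly as well.

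The pluripotential identity follows from Bedford-Taylor continuity of mixed Monge-Amp\`ere operators under uniform decreasing convergence of bounded plurisubharmonic functions: $(dd^cu_j)^n\to(dd^cu)^n$ and $(dd^cu_j)^{n-k}\wedge\omega^k\to(dd^cu)^{n-k}\wedge\omega^k$ weakly on $\Omega$, and passing to the weak limit in $(dd^cu_j)^n=h_j\,(dd^cu_j)^{n-k}\wedge\omega^k$ produces $(dd^cu)^n=\psi(z,u)\,(dd^cu)^{n-k}\wedge\omega^k$ as measures. The main obstacle is the identification $u^*=u$: stability of viscosity subsolutions under a decreasing limit is delicate, since upper tests of $u^*$ do not automatically test the $u_j$ from above, so the localization via strict convex perturbation and the control provided by the uniform convergence $h_j\to h$ constitute the technical heart of the argument; after that, uniqueness from Theorem \ref{comprison_principle} closes the circle, and Bedford-Taylor continuity does the rest.
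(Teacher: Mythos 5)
Your overall strategy — produce smooth Guan--Sun solutions $u_j$ to approximating Dirichlet problems, use comparison to get monotone decrease and $u_j\ge u$, then identify the limit and pass to the pluripotential identity via Bedford--Taylor continuity — matches the paper up to the decisive step, where you diverge. To show that $u_j\searrow u$ the paper does \emph{not} invoke any abstract stability machinery: it exploits the concavity and homogeneity of $S_{n,n-k}^{1/k}$ to obtain the pointwise inequality
\[
\frac{(dd^c(u_j+\epsilon|z|^2))^n}{(dd^c(u_j+\epsilon|z|^2))^{n-k}\wedge\omega^k}\ \geq\ \frac{(dd^cu_j)^n}{(dd^cu_j)^{n-k}\wedge\omega^k}+\epsilon^k,
\]
so that for $j$ large enough $u_j+\epsilon(|z|^2-C)$ becomes a subsolution of the original equation that lies below $u$ on $\partial\Omega$; the same comparison principle already used once then gives $u_j+\epsilon(|z|^2-C)\le u$, and letting $\epsilon\to 0$ forces $u_j\searrow u$ (with an explicit rate). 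You instead identify the decreasing limit $u^*$ by arguing that it is a viscosity solution of $F[v]=\psi(z,u)$ via half-relaxed-limit stability and then invoking uniqueness. This is a legitimate alternative, but two caveats are worth flagging. First, each $u_j$ solves $F[u_j]=h_j$ with a $j$-dependent right-hand side, so the stability lemma actually recorded in the paper (which fixes a single equation) does not apply verbatim; you need the general stability under locally uniformly convergent data, which is standard (Crandall--Ishii--Lions, Section~6) but is an additional import into the argument. Second, you only need the subsolution half: $u^*$ is a subsolution of $F=\psi(z,u)$, $u$ is a supersolution (being a solution), they have the same boundary values, so comparison gives $u^*\le u$; combined with $u\le u^*$ this finishes the identification without discussing $(u^*)_*$. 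Your route is correct but heavier; the paper's concavity-plus-homogeneity perturbation is more economical because it never leaves the comparison principle already proved.
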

\begin{proof}
	Let $\varphi_j\in C^{\infty}(\partial\Omega)$ and $0<\psi_j\in C^{\infty}(\bar{\Omega})$ be sequences of smooth functions such that
	$\varphi_j\searrow\varphi$ and $\psi_j\nearrow \psi(z,u)$ as $j\rightarrow\infty$. Then, by Theorem \ref{GS.the},
	for any $j=1,2,...$, there exists
	a smooth strictly plurisubharmonic function $u_j$ in $\bar{\Omega}$ such that
	\begin{equation}
	\dfrac{(dd^cu_j)^n}{(dd^cu_j)^{n-k}\wedge\omega^k}= \psi_j (z) \quad\mbox{in}\quad\Omega,\qquad
	u_j=\varphi_j\quad\mbox{in}\quad\partial\Omega.
	\end{equation}
	By the comparison principle, we have
	\begin{center}
		$u_1\geq u_2\geq ...\geq u_j\geq ...\geq u.$
	\end{center}
Let $C>\sup\limits_{\Omega}|z|^2$. By the homogeneity and the concavity of $S_{n,n-k}^{1/k}$, we have
\begin{center}
	$\dfrac{(dd^c(u_j+\epsilon |z|^2))^n}{(dd^c(u_j+\epsilon|z|^2))^{n-k}\wedge\omega^k}
	\geq \dfrac{(dd^cu_j)^n}{(dd^cu_j)^{n-k}\wedge\omega^k}
	+\epsilon^k.$
\end{center}
Then, by the comparison principle, for any $\epsilon>0$, there exists $N>0$ such that
\begin{center}
	$u_j+\epsilon (|z|^2-C)\leq u,$
\end{center}
for any $j>N$. Hence, $u_j$ is decreasing to $u$ as $j\rightarrow\infty$.
\end{proof}
Observe that a continuous solution of \eqref{pp.relation.eq} in the pluripotential sense may not be a viscosity solution of \eqref{vis.relation.eq}.
For example, if a continuous plurisubharmonic function $u: \Omega\rightarrow\R$ depends only on $n-k-1$ variables then $u$ is
a solution of  \eqref{pp.relation.eq} in the pluripotential sense but $u$ is not a  viscosity solution of \eqref{vis.relation.eq}.
Moreover, by Corollary \ref{relation.Cor1}, we know that a viscosity solution of \eqref{vis.relation.eq} has to sastisfy
$(dd^c u)^k\geq a\beta^k$ for some $a>0$.  The following question is natural:

\medskip
\noindent
{\bf Question.}\label{ppvis.ques}
	{\it If $u\in PSH(\Omega)\cap C(\bar{\Omega})$ satisfies \eqref{pp.relation.eq} in the pluripotential sense and
\begin{equation}
(dd^cu)^k\geq a\beta^k
\end{equation}	
for some $a>0$, does $u$ satisfy \eqref{vis.relation.eq} in the viscosity sense?}
	
\medskip	
	
At the end of this section, we will give the answer to a special case of this question.
 Now, we consider the relation between viscosity subsolutions of \eqref{vis.relation.eq}
  and \textit{pluripotential subsolutions} of \eqref{pp.relation.eq}. Recall that according to the definition in subsection 2.1 for any $n\times n$ complex matrix $A$ and $k\in\{1,...,n\}$,
   $S_k(A)$ denotes the coefficient with respect to $t^{n-k}$ of
the polynomial $\binom{n}{k}^{-1}\det (A+t {\rm Id}_n)$. 

Next we prove the following technical result:
\begin{lemma}\label{matrix.lem}
Assume that $A, B$ are $n\times n$ complex matrices and $k\in\{1,...,n\}$.  Then
	\begin{center}
		$S_k(AA^*)S_k(BB^*)\geq |S_k(AB^*)|^2.$
	\end{center}
\end{lemma}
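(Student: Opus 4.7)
The plan is to identify each of the three quantities appearing in the inequality with either a squared Euclidean norm or an inner product of vectors in a finite dimensional complex space, and then apply the classical Cauchy--Schwarz inequality. The bridge between the spectral quantities $S_k$ and this Euclidean structure is the Cauchy--Binet formula applied to each $k\times k$ principal minor.

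First I would recall that, by the very definition of $S_k$ spelled out in Subsection~2.1, for any $n\times n$ complex matrix $M$ one has $\sigma_k(M)=\binom{n}{k}\,S_k(M)$, and a classical identity expresses $\sigma_k(M)$ as the sum of the $k\times k$ principal minors $\det M_{II}$ taken over $k$-subsets $I\subset\{1,\ldots,n\}$. Hence it suffices to prove the corresponding inequality with $\sigma_k$ in place of $S_k$.

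Next, for a fixed $k$-subset $I$, I would apply Cauchy--Binet to the factorization $(AB^{*})_{II}=A_{I\bullet}\cdot (B^{*})_{\bullet I}$, where $A_{I\bullet}$ is the $k\times n$ submatrix obtained by keeping only the rows in $I$. This yields
\[
\det\bigl((AB^{*})_{II}\bigr)=\sum_{|J|=k}\det A_{IJ}\cdot\det (B^{*})_{JI}=\sum_{|J|=k}\det A_{IJ}\cdot\overline{\det B_{IJ}},
\]
where in the second equality one uses that the submatrix $(B^{*})_{JI}$ is the conjugate transpose of $B_{IJ}$. Specializing $B=A$ (and $A=B$) produces $\det((AA^{*})_{II})=\sum_{|J|=k}|\det A_{IJ}|^{2}$ and the analogous identity for $BB^{*}$. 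Summing over $I$ and introducing vectors $a,b\in\C^{N}$ with $N=\binom{n}{k}^{2}$, indexed by pairs $(I,J)$ of $k$-subsets via $a_{(I,J)}:=\det A_{IJ}$ and $b_{(I,J)}:=\det B_{IJ}$, I obtain the clean representation
\[
\sigma_k(AA^{*})=\|a\|^{2},\qquad \sigma_k(BB^{*})=\|b\|^{2},\qquad \sigma_k(AB^{*})=\langle a,b\rangle.
\]

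The inequality then reduces to the textbook statement $|\langle a,b\rangle|^{2}\leq \|a\|^{2}\|b\|^{2}$ in $\C^{N}$; dividing both sides by $\binom{n}{k}^{2}$ converts $\sigma_k$ into $S_k$ and yields the claim. The only point that requires real care is the bookkeeping in the Cauchy--Binet step, in particular verifying that $(B^{*})_{JI}=(B_{IJ})^{*}$ so that the complex conjugate appears correctly under the determinant. Apart from this purely formal check the proof is essentially automatic, and the structural insight is simply that $S_k$ of $MM^{*}$ is, up to normalization, the squared $\ell^{2}$-norm of the vector of $k\times k$ minors of $M$.
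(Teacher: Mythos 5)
Your proof is correct, and it takes a genuinely different route from the paper's. The paper also begins by writing $\binom{n}{k}S_k(AA^*)=\sum_{|J|=k}\det\bigl(\langle a_p,a_q\rangle\bigr)_{p,q\in J}$ (and likewise for $BB^*$ and $AB^*$), but instead of expanding each principal minor further via Cauchy--Binet, it proves the stronger \emph{per-minor} inequality $\det\bigl(\langle a_p,a_q\rangle\bigr)_J\cdot\det\bigl(\langle b_p,b_q\rangle\bigr)_J\geq\bigl|\det\bigl(\langle a_p,b_q\rangle\bigr)_J\bigr|^2$ for each fixed $J$, reducing by Gram--Schmidt and rescaling to the case of two orthonormal $k$-frames and then bounding $\det(MM^*)$ by $1$ via the trace; only afterwards does it combine these per-minor bounds with a scalar Cauchy--Schwarz over the index $J$. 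Your argument short-circuits this: Cauchy--Binet applied to $(AB^*)_{II}=A_{I\bullet}(B^*)_{\bullet I}$ exposes the whole thing as a single Hermitian inner product $\langle a,b\rangle$ on the space of $k\times k$ minors, and one application of Cauchy--Schwarz in $\C^{\binom{n}{k}^2}$ finishes. Your version is shorter and makes the underlying bilinear structure transparent, whereas the paper's version yields the slightly finer information that the inequality already holds minor-by-minor, which is occasionally useful in its own right. One small bookkeeping check you flagged, $(B^*)_{JI}=(B_{IJ})^*$, is indeed the crux of the conjugation appearing correctly; your verification of it is right.
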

\begin{proof}
Denote by $a_1,...,a_n$ and $b_1,...,b_n$, respectively, the row vectors of $A$ and $B$. Then
\begin{center}
	$S_k(AA^*)=(C_n^k)^{-1}\sum\limits_{\sharp J=k} \det\left(\langle a_p, a_q\rangle\right)_{p,q\in J},$
\end{center}
\begin{center}
	$S_k(BB^*)=(C_n^k)^{-1}\sum\limits_{\sharp J=k} \det\left(\langle b_p, b_q\rangle\right)_{p,q\in J},$
\end{center}
and
\begin{center}
	$S_k (AB^*)=(C_n^k)^{-1}\sum\limits_{\sharp J=k}\det\left(\langle a_p, b_q\rangle\right)_{p,q\in J}.$
\end{center}
We will show that, for any $J=\{p_1,...,p_k\}$ with $1\leq p_1<...<p_k\leq n$,
\begin{equation}\label{1.linear.eq}
\det\left(\langle a_p, a_q\rangle\right)_{p,q\in J}.
\det\left(\langle b_p, b_q\rangle\right)_{p,q\in J}\geq |\det\left(\langle a_p, b_q\rangle\right)_{p,q\in J}|^2.
\end{equation}
Indeed, if either $\{a_{p_1},...,a_{p_k}\}$ or
$\{b_{p_1},...,b_{p_k}\}$ are linearly dependent then both sides of \eqref{1.linear.eq} are equal to $0$.
Otherwise, exploiting the Gramm-Schmidt process, we can assume that $\{a_{p_1},...,a_{p_k}\}$ and 
$\{b_{p_1},...,b_{p_k}\}$ are  orthogonal systems (observe that the quantities in question do not change during the orthogonalization process). Next normalizing the vectors $a_{p_j}$ and $b_{p_j}$, $j=1,\cdots,n$ to unit length both sides change by the same factor. Hence it suffices to prove the statement for two collections of orthonormal bases.

 Under this assumption we have
\begin{eqnarray}
	\left(\langle a_p,a_q\rangle\right)_{p,q\in J}=\left(\langle b_p, b_q\rangle\right)_{p,q\in J}=Id_k.
\end{eqnarray}
Let $M=\left(\langle a_p, b_q\rangle\right)_{p,q\in J}$. Then $MM^*$ is semi-positive Hermitian matrix, and
\begin{eqnarray*}
{\rm Tr}(MM^*) 
	&=&\sum\limits_{l=1}^k\sum\limits_{j=1}^k|\langle b_{p_j}, a_{p_l} \rangle|^2\\
	&=&\sum\limits_{j=1}^k\langle b_{p_j}, \sum\limits_{l=1}^k\langle b_{p_j}, a_{p_l}\rangle a_{p_l}\rangle\\
	&\leq &\sum\limits_{j=1}^k \|b_{p_j}\|^2=k.
\end{eqnarray*}
Therefore, $|\det (M)|=\sqrt{\det (MM^*)}\leq 1$, hence we obtain \eqref{1.linear.eq}.
 Finally, using \eqref{1.linear.eq} and the Cauchy-Schwarz inequality, we infer that
$$
S_k(AA^*)S_k(BB^*)\geq |S_k(AB^*)|^2,
$$
as required.
\end{proof}

For any $n\times n$ Hermitian matrix $A=(a_{j\bar \ell})$, we denote
\begin{center}
	$\omega_A=\sum\limits_{j,\ell=1}^na_{j \bar \ell} \dfrac{i}{\pi}dz_j\wedge d\bar{z}_\ell,$
\end{center}
and
\begin{center}
	$\mathcal{B}(A, k):=\{B\in\mathcal{H}_+^n| \dfrac{\omega_B^k\wedge\omega_A^{n-k}}{\omega^n}=1\},$
\end{center}
where $k=1,2...,n$. 

\begin{theorem}\label{vissub.the}
	Let $u\in PSH(\Omega)\cap L_{loc}^{\infty}(\Omega)$ and $0<g\in C(\Omega)$. Then the following are equivalent:
	\begin{itemize}
		\item [(i)] $\dfrac{(dd^cu)^n}{(dd^cu)^{n-k}\wedge\omega^k}\geq g^k(z)$ in the viscosity sense.
		\item [(ii)] For all $B\in \mathcal{B} (Id, n-k)$, 
		\begin{center}
			$(dd^c u)^k\wedge\omega_{B^2}^{n-k}\geq g^k(z)\omega^n,$
		\end{center}
	in viscosity sense.
	\item [(iii)] For any open set $U\Subset\Omega$, there are smooth plurisubharmonic functions $u_{\epsilon}$
	and  functions $0<g^{\epsilon}\in C^{\infty}(U)$ such that $u_{\epsilon}$ are decreasing to  $u$ and $g^{\epsilon}$ converge uniformly
	to $g$ as $\epsilon\searrow 0$, and
\begin{equation}\label{iii.vissub.eq}
	(dd^c u_{\epsilon})\wedge\omega_{A_1}\wedge...\wedge\omega_{A_{k-1}}\wedge\omega_{B^2}^{n-k}\geq g^{\epsilon}\omega^n,
\end{equation}
pointwise in $U$ for any $B\in\mathcal{B}(Id, n-k)$ and $A_1,..., A_{k-1}\in\mathcal{B}(B^2, k)$.
\item  [(iv)]  For any open set $U\Subset\Omega$, there are smooth strictly plurisubharmonic functions $u_{\epsilon}$
and  functions $0<g^{\epsilon}\in C^{\infty}(U)$ such that the sequence $u_{\epsilon}$ is decreasing to  $u$ and the sequence $g^{\epsilon}$ converges uniformly
to $g$ as $\epsilon\searrow 0$, and
\begin{equation}\label{iv.vissub.eq}
\dfrac{(dd^cu_{\epsilon})^n}{(dd^cu_{\epsilon})^{n-k}\wedge\omega^k}\geq (g^{\epsilon})^k,
\end{equation}
pointwise in $U$ for any $B\in\mathcal{B}(Id, n-k)$.
	\end{itemize}
\end{theorem}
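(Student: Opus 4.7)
I would prove all four equivalences by establishing the diamond (i)$\Leftrightarrow$(ii), (iii)$\Leftrightarrow$(iv), (iv)$\Rightarrow$(i), and (ii)$\Rightarrow$(iii). The backbone consists of two pointwise identities for positive Hermitian matrices $A$. The first, extracted from Lemma \ref{matrix.lem}, is an infimum description of the Hessian quotient:
\begin{equation}
\label{plan:pointwise1}
\frac{S_n(A)}{S_{n-k}(A)}=\inf_{B\in \mathcal{B}(\mathrm{Id},n-k)}\frac{\omega_A^k\wedge \omega_{B^2}^{n-k}}{\omega^n},
\end{equation}
with the infimum attained at $B$ proportional to $A^{-1/2}$. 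The second is a G\aa rding polarization identity:
\begin{equation}
\label{plan:pointwise2}
\left(\frac{\omega_A^k\wedge\omega_{B^2}^{n-k}}{\omega^n}\right)^{1/k}=\inf_{A_1,\ldots,A_{k-1}\in\mathcal{B}(B^2,k)}\frac{\omega_A\wedge\omega_{A_1}\wedge\cdots\wedge\omega_{A_{k-1}}\wedge\omega_{B^2}^{n-k}}{\omega^n},
\end{equation}
attained when $A_1=\cdots=A_{k-1}$ is proportional to $A$. In effect, \eqref{plan:pointwise1} makes (ii) the ``linear-in-$A$'' face of (i) after the infimum over $B$, and \eqref{plan:pointwise2} makes (iii) the ``linear-in-$u_\epsilon$'' face of (iv).

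The equivalences (i)$\Leftrightarrow$(ii) and (iii)$\Leftrightarrow$(iv) are then pointwise applications of these identities. For (i)$\Rightarrow$(ii), fix $B$ and an upper test $q$ for $u$ at $z_0$; since $u$ is plurisubharmonic, $Hq(z_0)\geq 0$, and if it is only semi-definite one perturbs $q$ by $\delta|z-z_0|^2$ (still an upper test) and lets $\delta\to 0$. Apply \eqref{plan:pointwise1} to $A=Hq(z_0)$ to conclude. The converse and the smooth equivalence (iii)$\Leftrightarrow$(iv) proceed identically; here (iii)$\Rightarrow$(iv) uses the specific choice $A_j=Hu_\epsilon/\tau$ (with $\tau$ chosen so $A_j\in \mathcal B(B^2,k)$) in \eqref{plan:pointwise2} to recover $\omega_{Hu_\epsilon}^k\wedge\omega_{B^2}^{n-k}/\omega^n\geq (g^\epsilon)^k$, and then \eqref{plan:pointwise1} to conclude (iv). The implication (iv)$\Rightarrow$(i) is the classical stability of viscosity subsolutions under monotone decreasing limits of smooth subsolutions, applied to the continuous operator $S_n/S_{n-k}$ on the cone of positive Hermitian matrices; this passes through once $g^\epsilon\to g$ uniformly on compacta.

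The main obstacle is the smoothing step (ii)$\Rightarrow$(iii). The strategy is to linearize: by \eqref{plan:pointwise2} the condition (ii) is equivalent to the family of linear-in-$u$ viscosity inequalities
\[
dd^cu\wedge\omega_{A_1}\wedge\cdots\wedge\omega_{A_{k-1}}\wedge\omega_{B^2}^{n-k}\geq g^k\omega^n
\]
for every admissible $B$ and every $A_j\in \mathcal{B}(B^2,k)$. Because the test forms $\omega_{A_j}$ and $\omega_{B^2}$ are smooth, positive and constant-coefficient, and because $u$ is bounded plurisubharmonic, this linear viscosity inequality promotes to an inequality of currents: one pairs both sides against a nonnegative smooth $\phi$ and runs the Eyssidieux-Guedj-Zeriahi argument from \cite{EGZ}, simplified drastically by the linearity in $u$. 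Convolving $u$ with a standard radial mollifier $\chi_\epsilon$ and using the identity $dd^c(u*\chi_\epsilon)=(dd^cu)*\chi_\epsilon$ then yields the pointwise inequality
\[
dd^cu_\epsilon\wedge\omega_{A_1}\wedge\cdots\wedge\omega_{A_{k-1}}\wedge\omega_{B^2}^{n-k}\geq (g^k*\chi_\epsilon)\omega^n\quad\text{on }U\Subset\Omega,
\]
uniformly in $B$ and in the $A_j$. Setting $g^\epsilon:=(g^k*\chi_\epsilon)^{1/k}$ (which converges uniformly to $g$ on $U$ by continuity of $g$) and, if needed for (iv), replacing $u_\epsilon$ by $u_\epsilon+\delta_\epsilon|z|^2$ with $\delta_\epsilon\searrow 0$ to ensure strict plurisubharmonicity, produces the approximating family demanded by (iii), and via \eqref{plan:pointwise1}--\eqref{plan:pointwise2} also by (iv). The delicate point here is the current-sense promotion of the linearized viscosity inequality for $u$ merely $L^\infty_{loc}$ plurisubharmonic; this is where I expect the heaviest technical work, but the linearity of the operator and the explicit mollification should make it tractable.
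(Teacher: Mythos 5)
Your proposal follows the paper's argument essentially step for step: the matrix lemma gives the $B$-infimum characterization of the Hessian quotient for (i)$\Rightarrow$(ii), G\aa rding polarization plus the Eyssidieux--Guedj--Zeriahi promotion of the linearized viscosity inequality to the sense of Radon measures plus mollification gives (ii)$\Rightarrow$(iii), the explicit choice of $B$ and $A_1=\cdots=A_{k-1}$ built from $Hu_\epsilon(z_0)$ gives (iii)$\Rightarrow$(iv), and (iv)$\Rightarrow$(i) is viscosity stability. One small correction: the infimum in \eqref{plan:pointwise1} is attained at $B$ proportional to $A$ (normalized so $S_{n-k}(B)=1$), not to $A^{-1/2}$ --- indeed the paper's (iii)$\Rightarrow$(iv) step takes $B=Hu_{\epsilon}(z_0)/S_{n-k}(Hu_{\epsilon}(z_0))^{1/(n-k)}$ --- but this is a non-load-bearing parenthetical and does not affect your argument.
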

\begin{proof}
	$(iv\Rightarrow i)$ is obvious. It remains to show $(i\Rightarrow ii\Rightarrow iii\Rightarrow iv)$.
	
	\medskip
	\noindent
	$(i\Rightarrow ii)$ Assume that $q\in C^2$ is an upper test for $u$ from at $z_0\in\Omega$.
	 Then $q$ is strictly plurisubharmonic in a neighborhood of
	$z_0$ and
	\begin{center}
		$\dfrac{(dd^cq)^n}{(dd^cq)^{n-k}\wedge\omega^k}\geq g^k,$
	\end{center}
	at $z_0$.
	
	By using Lemma \ref{matrix.lem} for $\sqrt{Hq}$ and $(\sqrt{Hq})^{-1}B$, we have
	\begin{center}
		$\dfrac{(dd^cq)^{n-k}\wedge\omega^k}{(dd^cq)^n} \dfrac{(dd^cq)^k\wedge\omega_{B^2}^{n-k}}{\omega^n}
		=\dfrac{(dd^cq)^{n-k}\wedge\omega^k}{\omega^n} \dfrac{(dd^cq)^k\wedge\omega_{B^2}^{n-k}}{(dd^cq)^n}
		\geq \left(\dfrac{\omega_B^{n-k}\wedge\omega^k}{\omega^n}\right)^2,$
	\end{center}
	for any  $B\in \mathcal{H}_+^n$, (observe that $S_k(CC^*)= \dfrac{(dd^cq)^k\wedge\omega_{B^2}^{n-k}}{(dd^cq)^n}$
 and $S_k(\sqrt{Hq}C^*)=\dfrac{\omega_B^{n-k}\wedge\omega^k}{\omega^n}$  for $C=(\sqrt{Hq})^{-1}B$.)
	
	Then, for any $B\in\mathcal{B}(Id, n-k)$ we have
	\begin{center}
		$(dd^cq)^k\wedge\omega_{B^2}^{n-k}\geq g^k\omega^n,$
	\end{center}
at $z_0$. Hence
\begin{center}
	$(dd^c u)^k\wedge\omega_{B^2}^{n-k}\geq g^k\omega^n,$
\end{center}
in the viscosity sense.

\medskip
\noindent
$(ii\Rightarrow iii)$ Assume that $q\in C^2$ touches $u$ from above at $z_0\in\Omega$. Then, for any $B\in\mathcal{B}(Id, n-k)$,
\begin{center}
	$(dd^cq)^k\wedge\omega_{B^2}^{n-k}\geq g^k\omega^n,$
\end{center}
at $z_0$. 
 By the same arguments as in \cite{Lu}, we have
\begin{center}
	$(dd^cq)\wedge\omega_{A_1}\wedge...\wedge\omega_{A_{k-1}}\wedge\omega_{B^2}^{n-k}\geq g\omega^n,$
\end{center}
for any $B\in\mathcal{B}(Id, n-k)$, $A_1,..., A_{k-1}\in\mathcal{B}(B^2, k)$. Hence
\begin{equation}\label{1.subvispp.eq}
(dd^c u)\wedge\omega_{A_1}\wedge...\wedge\omega_{A_{k-1}}\wedge\omega_{B^2}^{n-k}\geq g\omega^n,
\end{equation}
in the viscosity sense  for any $B\in\mathcal{B}(Id, n-k)$, $A_1,..., A_{k-1}\in\mathcal{B}(B^2, k)$.

Let $g_j$ be a sequence of smooth functions in $\Omega$ such that $g_j\nearrow g$ as $j\rightarrow\infty$. Then
\begin{equation}\label{2.subvispp.eq}
(dd^c u)\wedge\omega_{A_1}\wedge...\wedge\omega_{A_{k-1}}\wedge\omega_{B^2}^{n-k}\geq g_j\omega^n,
\end{equation}
in the viscosity sense  for any $j\in\N$,  $B\in\mathcal{B}(Id, n-k)$ and  $A_1,..., A_{k-1}\in\mathcal{B}(B^2, k)$.
By the same arguments as in \cite{EGZ} (the proof of Proposition 1.5), $u$ satisfies \eqref{2.subvispp.eq}
in the sense of positive Radon measures. Using convolution to regularize $u$ and setting $u_{\epsilon}=u*\rho_{\epsilon}$,
we see that $u_{\epsilon}$ is smooth strictly plurisubharmonic and
\begin{center}
	$(dd^c u_{\epsilon})\wedge\omega_{A_1}\wedge...\wedge\omega_{A_{k-1}}\wedge\omega_{B^2}^{n-k}\geq (g_j)_{\epsilon}\omega^n,$
\end{center}
pointwise in $\Omega_{\epsilon}$. Choosing $g^{\epsilon}:= (g_{[1/ \epsilon]})_{\epsilon}$, we obtain \eqref{iii.vissub.eq}.

\medskip
\noindent
$(iii\Rightarrow iv)$
 At $z_0\in\Omega_{\epsilon}$, choosing
\begin{center}
	$B=\dfrac{Hu_{\epsilon}(z_0)}{(S_{n-k}(Hu_{\epsilon}(z_0)))^{1/(n-k)}}$
\end{center}
and
\begin{center}
	$A_1=A_2=...=A_{k-1}=\left(\dfrac{(dd^c u_{\epsilon}(z_0))^k\wedge\omega_{B^2}^{n-k}}{\omega^n}\right)^{-1/k} Hu_{\epsilon}(z_0) $,
\end{center}
we get,
\begin{eqnarray*}
	g^{\epsilon} &\leq& \left(\dfrac{(dd^c u_{\epsilon}(z_0))^k\wedge\omega_{B^2}^{n-k}}{\omega^n}\right)^{1/k}\\
	&=&\left(\dfrac{(dd^c u_{\epsilon}(z_0))^n}{\omega^n}\dfrac{1}{S_{n-k}(Hu_{\epsilon}(z_0))}\right)^{1/k}\\
	&=&\left(\dfrac{(dd^c u_{\epsilon}(z_0))^n}{\omega^n}\dfrac{\omega^n}{(dd^cu_{\epsilon})^{n-k}\wedge\omega^k}\right)^{1/k}\\
	&=&\left( \dfrac{(dd^cu_{\epsilon})^n}{(dd^cu_{\epsilon})^{n-k}\wedge\omega^k}\right)^{1/k},
\end{eqnarray*}
pointwise in $\Omega_{\epsilon}$. Then
\begin{center}
	$\dfrac{(dd^cu_{\epsilon})^n}{(dd^cu_{\epsilon})^{n-k}\wedge\omega^k}\geq (g^{\epsilon})^k.$
\end{center}
The proof is completed.
\end{proof}

As a consequence, our result  implies that a viscosity subsolution is a pluripotential subsolution.
\begin{theorem}\label{relation.Cor1}
	Assume that $\psi(z,s)=\psi(z)$ with $\psi\in C^0(\Omega)$ and $u\in PSH(\Omega)\cap L_{loc}^{\infty}(\Omega)$
	 is a viscosity subsolution of \eqref{vis.relation.eq}. Then
	 \begin{equation}\label{1.corvispp.eq}
	 (dd^cu)^n\geq \psi (dd^cu)^{n-k}\wedge\omega^k,
	 \end{equation}
and
	\begin{equation}\label{2.corvispp.eq}
	(dd^c u)^k\geq \binom{n}{k}^{-1}\psi \omega^k,
	\end{equation}	
in the pluripotential sense.
	If $u$ is continuous then the conclusion still holds in the case where $\psi$ depends on both variables.
\end{theorem}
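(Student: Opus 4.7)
The plan is to feed the viscosity subsolution property into Theorem~\ref{vissub.the} to produce smooth plurisubharmonic approximants, and then transport the resulting pointwise inequalities to pluripotential ones by Bedford--Taylor's convergence of mixed Monge--Amp\`ere currents along decreasing sequences of locally bounded plurisubharmonic functions. Applying Theorem~\ref{vissub.the}(iv) with $g := \psi^{1/k}$, on every open $U \Subset \Omega$ one obtains smooth strictly plurisubharmonic functions $u_{\epsilon} \searrow u$ and positive $g^{\epsilon} \in C^{\infty}(U)$ converging uniformly to $g$, satisfying pointwise
\[
(dd^c u_{\epsilon})^n \;\geq\; (g^{\epsilon})^k \, (dd^c u_{\epsilon})^{n-k} \wedge \omega^k.
\]

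To obtain \eqref{1.corvispp.eq}, I would test the above pointwise inequality against an arbitrary nonnegative $\phi \in C_{c}(U)$ and let $\epsilon \to 0$: Bedford--Taylor yields the weak convergence $(dd^c u_{\epsilon})^n \to (dd^c u)^n$ and $(dd^c u_{\epsilon})^{n-k} \wedge \omega^k \to (dd^c u)^{n-k} \wedge \omega^k$ as positive Borel measures, while uniform convergence $(g^{\epsilon})^k \to \psi$ survives pairing with the second current. Locality of the resulting measure inequality then gives \eqref{1.corvispp.eq} on all of $\Omega$.

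The second inequality \eqref{2.corvispp.eq} rests on a pointwise algebraic fact: for any $\lambda_1, \ldots, \lambda_n > 0$,
\[
\frac{\sigma_n(\lambda)}{\sigma_{n-k}(\lambda)} \;=\; \frac{1}{\sum_{|I|=k} \prod_{i \in I} \lambda_i^{-1}} \;\leq\; \prod_{i \in I_0} \lambda_i,
\]
for every fixed subset $I_0$ of size $k$, obtained by retaining a single term in the denominator. Since the smooth inequality for $u_\epsilon$ translates to $\binom{n}{k}\sigma_n(\lambda)/\sigma_{n-k}(\lambda) \geq (g^{\epsilon})^k$ for $\lambda = \lambda(Hu_{\epsilon})$, we deduce $\prod_{i \in I} \lambda_i \geq \binom{n}{k}^{-1}(g^{\epsilon})^k$ for every $|I| = k$. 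Diagonalising $Hu_{\epsilon}$ in a unitary frame in which $\omega$ retains its standard form, this is exactly the strong positivity of the smooth $(k,k)$-form $(dd^c u_{\epsilon})^k - \binom{n}{k}^{-1}(g^{\epsilon})^k \omega^k$, which is a coordinate-invariant condition; passing to the weak limit once more yields $(dd^c u)^k \geq \binom{n}{k}^{-1}\psi\,\omega^k$ as $(k,k)$-currents.

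For the case of continuous $u$ with $\psi = \psi(z, s)$, set $\tilde{g}(z) := \psi(z, u(z))^{1/k}$, which is continuous on $\Omega$. At any upper test $q$ for $u$ at $z_0$, $F[q](z_0) \geq \psi(z_0, q(z_0)) = \psi(z_0, u(z_0)) = \tilde{g}^k(z_0)$, so $u$ remains a viscosity subsolution of the auxiliary equation with continuous right-hand side $\tilde{g}^k$, and the two previous paragraphs apply verbatim. The main subtlety throughout is bookkeeping the fact that $\psi > 0$ forces every admissible upper test to satisfy $\lambda(Hq) \in \Gamma_n$ (otherwise $f(\lambda(Hq)) = 0 < \psi$ contradicts subsolutivity), which legitimises the ratio in \eqref{vis.relation.eq} on smooth upper tests and is exactly what allows Theorem~\ref{vissub.the} to be invoked.
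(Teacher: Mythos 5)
Your proof is correct and follows the same overall strategy as the paper's: invoke Theorem~\ref{vissub.the} to produce smooth strictly plurisubharmonic approximants $u_\epsilon \searrow u$ with a pointwise quotient inequality, and then pass to weak limits via Bedford--Taylor. The interesting divergence is in the derivation of \eqref{2.corvispp.eq}. The paper takes the $B$-parameterised pointwise inequality $(dd^c u_\epsilon)^k \wedge \omega_{B^2}^{n-k}\geq h^\epsilon\omega^n$, diagonalises $H u_\epsilon(z_0)$, and then makes the specific choice $B=\binom{n}{k}^{1/(n-k)}\mathrm{diag}(0,\ldots,0,1,\ldots,1)$ to extract $\lambda_1\cdots\lambda_k\geq \binom{n}{k}^{-1}h^\epsilon$. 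You instead go straight from part~(iv) to the product bound via the elementary identity
$$\frac{\sigma_n(\lambda)}{\sigma_{n-k}(\lambda)}=\left(\sum_{|I|=k}\prod_{i\in I}\lambda_i^{-1}\right)^{-1}\leq \prod_{i\in I_0}\lambda_i \quad\text{for each fixed }|I_0|=k,$$
which yields $\prod_{i\in I}\lambda_i\geq\binom{n}{k}^{-1}(g^\epsilon)^k$ for every $|I|=k$ simultaneously and hence the strong positivity of $(dd^c u_\epsilon)^k-\binom{n}{k}^{-1}(g^\epsilon)^k\omega^k$. This sidesteps the auxiliary family $\mathcal{B}(\mathrm{Id},n-k)$ and Lemma~\ref{matrix.lem} in this step (both of which the paper routes through), at the cost of nothing; it is a cleaner and more elementary way to reach the same conclusion. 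The handling of the $\psi(z,s)$ case by freezing $s=u(z)$ and observing that $\tilde g(z)=\psi(z,u(z))^{1/k}$ is a continuous positive right-hand side against which the same argument applies is also exactly right and is the intended reading of the paper's remark.
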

\begin{proof}
By Theorem \ref{vissub.the},  for any open set $U\Subset\Omega$,  there are strictly plurisubharmonic functions $u_{\epsilon}\in C^{\infty}(U)$
	and functions $0<h^{\epsilon}\in C^{\infty}(U)$ such that $u_{\epsilon}$ is decreasing to  $u$ and $h^{\epsilon}$ converges uniformly
	to $\psi$ as $\epsilon\searrow 0$, and
	\begin{equation}
	\dfrac{(dd^cu_{\epsilon})^n}{(dd^cu_{\epsilon})^{n-k}\wedge\omega^k}\geq h^{\epsilon},
	\end{equation}
	pointwise in $U$. 
	Choosing $B={\rm Id}_n$ and letting $\epsilon\rightarrow 0$, we obtain \eqref{1.corvispp.eq}.
	
\medskip	
	It also follows from Theorem \ref{vissub.the} that we can choose $u_{\epsilon}$ and $h^{\epsilon}$ so that
	\begin{equation}\label{1.corvisppproof.eq}
	(dd^c u_{\epsilon})^{k}\wedge\omega_{B^2}^{n-k}\geq h^{\epsilon}\omega^n,
	\end{equation}
	pointwise in $U$ for any $B\in\mathcal{B}({\rm Id}, n-k)$.
	Fix $z_0\in U$ and $0<\epsilon\ll 1$.
	 We can choose complex coordinates so that $Hu_{\epsilon}(z_0)={\rm diag} (\lambda_1,\ldots,\lambda_n)$, where
	 $0\leq\lambda_1\leq \ldots\leq\lambda_n$. Choosing
	 \begin{center}
	 	$B=\binom{n}{k}^{1/(n-k)}{\rm diag} (0,\ldots,\underset{k-th}{\underbrace{0}}, 1,\ldots,1),$
	 \end{center}
	 we get
	 \begin{center}
	 	$\lambda_1\ldots \lambda_k\geq \binom{n}{k}^{-1} h^{\epsilon}.$
	 \end{center}
	 Then
	  \begin{center}
	  	$(dd^c u_{\epsilon})^{k}\geq\binom{n}{k}^{-1} h^{\epsilon}\omega^k$,
	  \end{center}
	  	pointwise in $U$. Letting $\epsilon\rightarrow 0$, we obtain \eqref{2.corvispp.eq}.
\end{proof}
{\bf Remark} Note that for strictly positive $\psi$ (\ref{2.corvispp.eq}) implies that the {\it natural} space of functions to consider for the Hessian quotient problem  \eqref{vis.relation.eq} is {\it not} the space of bounded plurisubharmonic functions but a considerably smaller one.

By assuming an additional conditions, we can also prove that a pluripotential subsolution is a visocsity one.
\begin{corollary}\label{relation.Cor2}
	Assume that $\psi(z,s)=\psi(z)>0$ with $\psi\in C^0(\Omega)$ and $u$ is a local bounded plurisubharmonic function in $\Omega$ satisfying
	\begin{center}
		$(dd^cu)^k\geq \psi \omega^k,$
	\end{center}
	in the pluripotential sense. Then
\begin{center}
	$\dfrac{(dd^c u)^n}{(dd^cu)^{n-k}\wedge\omega^k}\geq \psi ,$
\end{center}
in the viscosity sense.
\end{corollary}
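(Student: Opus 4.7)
The plan is to reduce the claim to the equivalences in Theorem \ref{vissub.the}. Since $\psi>0$, setting $g:=\psi^{1/k}$ identifies the desired conclusion with condition $(i)$ of that theorem. By the equivalence $(i)\Leftrightarrow (ii)$ it therefore suffices to verify that for every $B\in\mathcal{B}(\mathrm{Id},n-k)$, the inequality
\[
(dd^c u)^k\wedge\omega_{B^2}^{n-k}\,\geq\,\psi\,\omega^n
\]
holds in the viscosity sense.

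First I would obtain this inequality in the sense of Radon measures. Wedging the pluripotential hypothesis $(dd^c u)^k\geq \psi\,\omega^k$ with the smooth strongly positive $(n-k,n-k)$-form $\omega_{B^2}^{n-k}$ yields
\[
(dd^c u)^k\wedge\omega_{B^2}^{n-k}\,\geq\,\psi\,\omega^k\wedge\omega_{B^2}^{n-k}\,=\,\psi\,S_{n-k}(B^2)\,\omega^n,
\]
the last identity being a direct computation in an orthonormal basis diagonalising $B$. The defining property $B\in\mathcal{B}(\mathrm{Id},n-k)$ reads $S_{n-k}(B)=1$, and the Cauchy-Schwarz inequality applied to the representation $\sigma_{n-k}(\mathrm{eig}(B^2))=\sum_{|J|=n-k}\bigl(\prod_{j\in J}\beta_j\bigr)^2$ gives $S_{n-k}(B^2)\geq S_{n-k}(B)^2=1$. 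Hence the displayed inequality holds as positive Radon measures, for every admissible $B$.

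The decisive step is then to transfer this measure inequality into the viscosity framework. For a fixed positive Hermitian $B$, the operator $v\mapsto (dd^c v)^k\wedge\omega_{B^2}^{n-k}/\omega^n$ becomes, after the constant linear change of coordinates that diagonalises and normalises $B$, the standard complex $k$-Hessian operator $S_k$ computed with respect to a rescaled K\"ahler metric on $\C^n$. The pluripotential--viscosity equivalence for the $k$-Hessian established in \cite{Lu}, extending \cite{EGZ} for $k=n$, therefore applies verbatim in the transformed coordinates; reversing the change of variables gives the sought viscosity inequality for $u$. Concretely, if some $C^2$ upper test $q$ of $u$ at a point $z_0$ were to violate $(dd^c q(z_0))^k\wedge\omega_{B^2}^{n-k}\geq \psi(z_0)\omega^n$, then the perturbation $\tilde q(z):=q(z)-\e(r^2-|z-z_0|^2)$ for small $\e,r>0$ combined with the $k$-Hessian comparison principle on $B(z_0,r)$ would force $\tilde q\geq u$ there, contradicting $\tilde q(z_0)<u(z_0)$. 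Assembling these viscosity inequalities over all $B\in\mathcal{B}(\mathrm{Id},n-k)$ and invoking $(ii)\Rightarrow (i)$ of Theorem \ref{vissub.the} concludes the proof. The hardest part is this last measure-to-viscosity transfer; it follows the standard perturbation-plus-comparison pattern of \cite{EGZ,Lu} once the linear rescaling reduces us to the untwisted $k$-Hessian case, and requires the positivity and continuity of $\psi$ to ensure the strict ellipticity needed for the perturbation argument.
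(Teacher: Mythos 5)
Your argument is correct and follows essentially the same route as the paper's own proof: wedge the pluripotential hypothesis with $\omega_{B^2}^{n-k}$, lower-bound the resulting density by $\psi$ using $S_{n-k}(B^2)\geq S_{n-k}(B)^2=1$ (the paper obtains this via Lemma~\ref{matrix.lem} with $A=\mathrm{Id}$, you via a direct Cauchy--Schwarz on the eigenvalues, which is the same inequality), transfer the pluripotential inequality to the viscosity sense by the method of \cite{Lu}, and invoke $(ii)\Rightarrow(i)$ of Theorem~\ref{vissub.the}. The only cosmetic differences are the order in which the \cite{Lu} transfer and the matrix inequality are applied, and the fact that you sketch the perturbation-plus-comparison argument behind the transfer while the paper simply cites \cite{Lu}.
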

\begin{proof}
	By the assumption, for any $A\in\mathcal{H}_+^n$,
	\begin{equation}\label{1cor2proof.eq}
		(dd^cu)^k\wedge\omega_A^{n-k}\geq \psi
		   \omega^k\wedge\omega_A^{n-k},
	\end{equation}
		in the pluripotential sense. By \cite{Lu}, \eqref{1cor2proof.eq} also holds in the viscosity sense. If $A=B^2$ for some
		$B\in\mathcal{B}({\rm Id}, n-k)$ then, by using Lemma \ref{matrix.lem}, we have
		\begin{center}
			$\omega^k\wedge\omega_{B^2}^{n-k}
\geq \left(\dfrac{\omega_B^{n-k}\wedge\omega^k}{\omega^n} \right)^2\omega^n=\omega^n.$
		\end{center}
		Then
		\begin{center}
			$(dd^cu)^k\wedge\omega_{B^2}^{n-k}\geq \psi\omega^n,$ 
		\end{center}
		in the viscosity sense, for any  $B\in\mathcal{B}({\rm Id}, n-k)$. Applying Theorem \ref{vissub.the}, we obtain
		\begin{center}
			$\dfrac{(dd^c u)^n}{(dd^cu)^{n-k}\wedge\omega^k}\geq \psi,$
		\end{center}
		in the viscosity sense.
\end{proof}

We now discuss  the notion of a {\it supersolution}.  By the same argument as in  \cite{GLZ1}, (relying on  the Berman's idea from \cite{Ber}) we obtain the following relation between  viscosity supersolutions of \eqref{vis.relation.eq}
and pluripotential supersolutions  of \eqref{pp.relation.eq}:
\begin{proposition}\label{vissup.the}
	Let $u\in PSH(\Omega)\cap C(\bar{\Omega})$ be a viscosity supersolution of \eqref{vis.relation.eq}.
	Then there exists an increasing sequence of strictly psh functions $u_j\in C^{\infty}(\bar{\Omega})$ such that $u_j$ converges 
	in capacity	to $u$ as $j\rightarrow\infty$, and
	\begin{center}
		$\dfrac{(dd^cu_j)^n}{(dd^cu_j)^{n-k}\wedge\omega^k}\leq \psi (z,u),$
	\end{center}
	pointwise in $\Omega$. In particular, 
	\begin{center}
		$(dd^cu)^n\leq \psi (z,u)(dd^cu)^{n-k}\wedge\omega^k,$
	\end{center}
	in the pluripotential sense.\\
	If there exists $a>0$ such that $(dd^c u)^k\geq a\omega^k$ then $u_j$ can be chosen such that 
		\begin{center}
			$\dfrac{(dd^cu_j)^n}{(dd^cu_j)^{n-k}\wedge\omega^k}\geq b,$
		\end{center}
		pointwise in $\Omega$ for some $b>0$.
\end{proposition}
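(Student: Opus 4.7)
The plan is to adapt the Berman-type envelope approximation developed by Guedj-Lu-Zeriahi \cite{GLZ1} (originally for the complex Monge-Amp\`ere equation on K\"ahler manifolds) to the Hessian quotient equation \eqref{vis.relation.eq}. First, mollify $u$ from below and $\psi(\cdot,u(\cdot))$ from below to obtain smooth psh approximants $\tilde u_\e\le u$ with $\tilde u_\e \to u$ uniformly, and smooth strictly positive $\tilde\psi_\e \to \psi(\cdot, u(\cdot))$ uniformly; concretely, set $\tilde u_\e = u\ast \rho_\e - \delta_\e$ after a slight continuous psh extension of $u$. For each $j\in\N$, we seek a smooth strictly plurisubharmonic solution $v=v_{j,\e}$ of the penalized Dirichlet problem
\[
\frac{(dd^c v)^n}{(dd^c v)^{n-k}\wedge\omega^k} = e^{j(v-\tilde u_\e)}\tilde\psi_\e(z), \qquad v|_{\partial\Omega}=\tilde u_\e|_{\partial\Omega}.
\]
The right-hand side is smooth, strictly positive and monotone increasing in $v$; this monotonicity preserves ellipticity and, combined with Theorem~\ref{GS.the} and a standard continuity/fixed-point argument, yields $v_{j,\e}\in C^\infty(\bar\Omega)$.

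To obtain $v_{j,\e}\le u$, suppose to the contrary that $v_{j,\e}-u$ attains a positive interior maximum at some $z_0$. A vertical translate of $v_{j,\e}$ is then a $C^2$ lower differential test for $u$ at $z_0$; since $Hv_{j,\e}(z_0)>0$ lies in the positive cone, the infimum over $N\ge 0$ in Definition~\ref{def} is attained at $N=0$ by ellipticity, so the viscosity supersolution property of $u$ forces
\[
\frac{(dd^c v_{j,\e})^n}{(dd^c v_{j,\e})^{n-k}\wedge\omega^k}(z_0)\le \psi(z_0,u(z_0)).
\]
However, the PDE at $z_0$ gives $e^{j(v_{j,\e}(z_0)-\tilde u_\e(z_0))}\tilde\psi_\e(z_0) > \tilde\psi_\e(z_0)$, which approaches $\psi(z_0,u(z_0))$ as $\e\to 0$; this is a contradiction for $\e$ small. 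Setting $u_j := v_{j,\e(j)}$ for a suitable $\e(j)\downarrow 0$, we obtain $u_j\in C^\infty(\bar\Omega)$ strictly psh with $u_j\le u$ and $\tfrac{(dd^c u_j)^n}{(dd^c u_j)^{n-k}\wedge\omega^k}\le \psi(z,u)$. The same comparison applied to the pair $(u_j,u_{j+1})$ gives $u_j\nearrow u_\infty\le u$. The identification $u_\infty=u$ (equivalently, $u_j\to u$ in capacity) is obtained by adapting the Berman/GLZ1 domination argument: on $\{u_\infty<u\}$ the penalty factor $e^{j(u_j-\tilde u_\e)}$ decays to zero, which via the weak convergence of $(dd^c u_j)^n$ and $(dd^c u_j)^{n-k}\wedge\omega^k$ forces the exceptional set to have zero capacity.

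Passing to the pluripotential limit in the pointwise PDE bound, Bedford-Taylor continuity of $(dd^c\cdot)^m$ under capacity convergence together with continuity of $\psi(z,u)$ yields $(dd^c u)^n\le \psi(z,u)(dd^c u)^{n-k}\wedge\omega^k$ in the pluripotential sense. For the additional statement under the hypothesis $(dd^c u)^k\ge a\omega^k$, replace the penalized right-hand side by
\[
b + e^{j(v-\tilde u_\e)}(\tilde\psi_\e - b)_+, \qquad 0<b<\inf_{\bar\Omega\times[\inf u,\sup u]}\psi.
\]
This preserves smoothness, positivity and monotonicity in $v$; the same comparison still gives $u_j\le u$, since on $\{v>u\}$ the right-hand side strictly exceeds $\psi(z,u)$, while the right-hand side is always $\ge b$, so $\tfrac{(dd^c u_j)^n}{(dd^c u_j)^{n-k}\wedge\omega^k}\ge b$.

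The main obstacle I expect is the convergence step, namely the inference $u_\infty=u$ in capacity. The difficulty is that the Hessian quotient is a ratio of two wedge products rather than a single Monge-Amp\`ere mass, so one cannot directly invoke a single-operator domination principle as in \cite{GLZ1}. Instead, one has to combine pluripotential inequalities of the type established in Theorem~\ref{vissub.the} (giving a lower bound on $(dd^c u_j)^k$ under the additional hypothesis, and more generally controlling the denominator $(dd^c u_j)^{n-k}\wedge\omega^k$) with Bedford-Taylor capacity estimates, in order to transfer the exponential decay of the penalty on $\{u_\infty<u\}$ into shrinkage of capacity.
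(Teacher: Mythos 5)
The high-level strategy is indeed the Berman/Guedj--Lu--Zeriahi penalization, and in broad strokes it matches the paper's route. However, there is one structural choice you make that diverges from the paper and creates several of the gaps you then struggle with: you mollify $u$ into $\tilde u_\varepsilon$ and $\psi(\cdot,u(\cdot))$ into $\tilde\psi_\varepsilon$ \emph{before} posing the penalized problem, and insist on constructing smooth solutions $v_{j,\varepsilon}$ right away via Theorem \ref{GS.the}. The paper instead keeps the exact continuous data $u$ and $g(z)=\psi(z,u(z))$ in the penalty, solves the penalized Dirichlet problem $\frac{(dd^cv_j)^n}{(dd^cv_j)^{n-k}\wedge\omega^k}=e^{j(v_j-u)}g(z)$ only in the \emph{viscosity} sense (which is available from Theorem \ref{existence_general} since the right-hand side is increasing in $v$), and postpones all smoothing to the very last step by applying Proposition \ref{vissol.the} to each $v_j$. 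Keeping the exact $u$ in the exponent is what makes the comparison $v_j\le u$ (and $v_j\le v_{j+1}$) an \emph{immediate} application of Theorem \ref{comprison_principle}, because $u$ is then, tautologically, a viscosity supersolution of the penalized equation.

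Your argument for $v_{j,\varepsilon}\le u$ has a genuine gap. At a putative positive interior maximum $z_0$ of $v_{j,\varepsilon}-u$, the supersolution property and the PDE give exactly
\[
e^{j(v_{j,\varepsilon}(z_0)-\tilde u_\varepsilon(z_0))}\,\tilde\psi_\varepsilon(z_0)\;=\;\frac{(dd^cv_{j,\varepsilon})^n}{(dd^cv_{j,\varepsilon})^{n-k}\wedge\omega^k}(z_0)\;\le\;\psi(z_0,u(z_0)),
\]
hence $e^{j\,c}\le\psi(z_0,u(z_0))/\tilde\psi_\varepsilon(z_0)$ where $c=\sup_\Omega(v_{j,\varepsilon}-u)>0$. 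Since you mollify $\psi$ \emph{from below}, the ratio on the right is $\ge1$ but \emph{not} $\le1$, so there is no contradiction; all you get is that $c=c(\varepsilon)\to0$ as $\varepsilon\to0$, not $c=0$. You would need either to take the exact $u$ and $\psi(\cdot,u(\cdot))$ (as the paper does), or a post-hoc downward shift of $v_{j,\varepsilon}$ that you then have to undo, with nontrivial bookkeeping to keep the sequence increasing. Similarly, your claim that the modified right-hand side $b+e^{j(v-\tilde u_\varepsilon)}(\tilde\psi_\varepsilon-b)_+$ exceeds $\psi(z,u)$ on $\{v>u\}$ fails for the same reason, and the fact that your construction never invokes the hypothesis $(dd^cu)^k\ge a\omega^k$ is itself a warning sign: in the paper that hypothesis is used, via Corollary \ref{relation.Cor2} and a comparison against the auxiliary equation $\frac{(dd^cv)^n}{(dd^cv)^{n-k}\wedge\omega^k}=ae^{j(v-u)}$, precisely to get the quantitative lower barrier $v_j\ge u-M/j$, which is what yields $e^{j(v_j-u)}\ge e^{-M}$ and hence the uniform lower bound $b=e^{-M}\min g$ on the quotient of the approximants.

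Finally, the capacity convergence, which you acknowledge as the main obstacle, is worked out in the paper by a concrete Bedford--Taylor comparison argument that bypasses the difficulty you anticipate about ``dividing measures.'' For $h\in PSH(\Omega)$ with $-1\le h\le0$ one has $(dd^c(u+\varepsilon h))^n\ge\varepsilon^n(dd^ch)^n$, and then
\[
\varepsilon^n\int_{\{v_j<u-2\varepsilon\}}(dd^ch)^n\le\int_{\{v_j<u+\varepsilon h-\varepsilon\}}(dd^cv_j)^n=\int_{\{v_j<u+\varepsilon h-\varepsilon\}}e^{j(v_j-u)}g\,(dd^cv_j)^{n-k}\wedge\omega^k\le Ce^{-j\varepsilon},
\]
where the middle equality is exactly the pluripotential identity of Proposition \ref{vissol.the} applied to $v_j$, and the final bound is Chern--Levine--Nirenberg plus the uniform bound $v_1\le v_j\le u$. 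So the ``single-operator domination'' you feared would not apply is recovered precisely because the pluripotential form of the Hessian quotient equation rewrites $(dd^cv_j)^n$ as a density times $(dd^cv_j)^{n-k}\wedge\omega^k$, and the latter is controlled by CLN; no division of measures is needed. To salvage your proposal you should (a) drop the mollification of $u$ in the exponent, (b) solve the penalized problem in the viscosity class and only smooth at the end via Proposition \ref{vissol.the}, and (c) run the displayed capacity estimate instead of the heuristic you sketch.
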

For the definition  of convergence in capacity, we refer to \cite{GZ2} and references therein.  
\begin{proof}
Denote $\varphi= u|_{\partial\Omega}$ and $g(z)=\psi(z,u(z))$. Then, for any $j\geq 1$, there exists a unique viscosity solution
$v_j$ of
\begin{equation}\label{1.vissupproof.eq}
\begin{cases}
\dfrac{(dd^cv_j)^n}{(dd^cv_j)^{n-k}\wedge\omega^k}= e^{j(v_j-u)}g(z) \quad\mbox{in}\quad\Omega,\\
v_j=\varphi \quad\mbox{in}\quad\partial\Omega.
\end{cases}
\end{equation}
Applying the comparison principle to the equation
\begin{center}
	$\dfrac{(dd^cv)^n}{(dd^cv)^{n-k}\wedge\omega^k}= e^{j(v-u)}g(z),$
\end{center}
we get $u\geq v_j$ and $v_{j+1}\geq v_j$ for any $j\geq 1$.\\
Note that, by Theorem \ref{vissol.the},
\begin{center}
	$(dd^cv_j)^n=e^{j(v_j-u)}g(z)(dd^cv_j)^{n-k}\wedge\omega^k,$
\end{center}
in the pluripotential sense. For any $h\in PSH(\Omega)$ such that $-1\leq h\leq 0$, we have,
\begin{eqnarray*}
\epsilon^n\int\limits_{\{v_j<u-2\epsilon\}}(dd^c h)^n &\leq &\int\limits_{\{v_j<u+\epsilon h-\epsilon\}}(dd^c (u+\epsilon h))^n\\
	&\leq& \int\limits_{\{v_j<u+\epsilon h-\epsilon\}}(dd^c v_j)^n\\
	&\leq &\int\limits_{\{v_j<u-\epsilon\}} e^{j(v_j-u)}g(z)(dd^cv_j)^{n-k}\wedge\omega^k\\
	&\leq &e^{-j\epsilon}\int\limits_{\{v_1<u-\epsilon\}} g(z)(dd^cv_j)^{n-k}\wedge\omega^k\\
	&\leq& Ce^{-j\epsilon},
\end{eqnarray*}
where $C>0$ is independent of $j$. The last inequality holds by the  Chern-Levine-Nirenberg inequalities (cf. \cite{GZ2}).
This implies that $v_j$ converges to $u$ in capacity.

\medskip
 If there exists $a>0$ such that $(dd^c u)^k\geq a\omega^k$ then, by Corollary \ref{relation.Cor2}, 
$$
 	\dfrac{(dd^cu)^n}{(dd^cu)^{n-k}\wedge\omega^k}\geq a,
$$
 in the viscosity sense. Choosing $M\gg 1$ such that $e^{-M}\sup\limits_{\Omega} g<a$, we get
$$
 	\dfrac{(dd^c v_j)^n}{(dd^cv_j)^{n-k}\wedge\omega^k}\leq a e^{j(v_j-u)+M}.$$
 Applying the comparison principle to the equation
 $$\dfrac{(dd^cv)^n}{(dd^cv)^{n-k}\wedge\omega^k}= ae^{j(v-u)},$$
 we get $v_j+\dfrac{M}{j}\geq u$ for any $j\geq 1$. Then
$$\dfrac{(dd^cv_j)^n}{(dd^cv_j)^{n-k}\wedge\omega^k}=e^{j(v_j-u)}g(z)\geq e^{-M}g(z),$$
 for any $j\geq 1$. Hence, by Corollary \ref{relation.Cor1}, 
 \begin{center}
 $(dd^c v_j)^k\geq \binom{n}{k}^{-1} e^{-M}g(z)\geq \binom{n}{k}^{-1}e^{-M}\min\limits_{\bar{\Omega}}g ,$	
 \end{center}
  for any $j\geq 1$.\\
Now, by Theorem \ref{vissol.the}, for any $j$ we can choose a strictly plurisubharmonic function $u_j\in  C^{\infty}(\bar{\Omega})$,
such that $$ v_j-\dfrac{1}{2^j}\leq u_j \leq v_j-\dfrac{1}{2^{j+1}}$$ and 
\begin{center}
$-\dfrac{1}{2^j}\leq\dfrac{(dd^cu_j)^n}{(dd^cu_j)^{n-k}\wedge\omega^k} - e^{j(v_j-u)}g(z)\leq 0.$
\end{center}
 It is easy to see that $u_j$ satisfies the required properties.
\end{proof}

The next result gives the answer to a special case of Question \ref{ppvis.ques}:
\begin{theorem}\label{ppvissol.the}
Let $u\in PSH(\Omega)\cap C(\Omega)$ such that
\begin{equation}\label{vissup.ppvis.the.eq}
	\dfrac{(dd^cu)^n}{(dd^cu)^{n-k}\wedge\omega^k}\leq \psi(z,u),
\end{equation}
in the viscosity sense and
\begin{equation}\label{ppsub.ppvis.the.eq}
	(dd^cu)^n\geq \psi (z,u)(dd^cu)^{n-k}\wedge\omega^k,
\end{equation}
in the pluripotential sense. If there exists $a>0$ such that $(dd^c u)^k\geq a\omega^k$ then $u$ is a viscosity solution of the equation
\begin{equation}\label{conclusion.ppvis.the.eq}
\dfrac{(dd^cu)^n}{(dd^cu)^{n-k}\wedge\omega^k}= \psi (z,u).
\end{equation}
\end{theorem}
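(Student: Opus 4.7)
The hypothesis \eqref{vissup.ppvis.the.eq} already provides that $u$ is a viscosity supersolution, so the entire task is to verify the viscosity subsolution property, i.e.\ $F[u] \geq \psi(z,u)$ in the viscosity sense, where $F[u] := (dd^c u)^n/((dd^c u)^{n-k}\wedge \omega^k)$. My plan is to carry this out via a Perron-type argument anchored by an auxiliary Dirichlet problem whose solution has to be identified with $u$.

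Using Theorem \ref{existence_general}, I would first solve $F[\tilde u] = \tilde g(z) := \psi(z, u(z))$ on $\Omega$ with boundary data $\tilde u|_{\partial\Omega} = u|_{\partial\Omega}$; here $\tilde g$ is a fixed positive continuous function of $z$ alone, and the strict pseudoconvexity of $\Omega$ delivers the required $\Gamma_n$-pseudoconvexity. The viscosity comparison principle (Theorem \ref{comprison_principle}) applied with this fixed right-hand side then yields $\tilde u \leq u$ on $\Omega$, since $\tilde u$ is in particular a viscosity subsolution while $u$ is a viscosity supersolution of $F[\,\cdot\,] = \tilde g$. What remains is the reverse inequality $u \leq \tilde u$, for then $u = \tilde u$ is automatically a viscosity solution.

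To establish $u \leq \tilde u$, I would apply Proposition \ref{vissup.the} to $\tilde u$: being a viscosity solution it is in particular a viscosity supersolution, and by Corollary \ref{relation.Cor1} the nondegeneracy $(dd^c\tilde u)^k \geq c\,\omega^k$ holds with $c = \binom{n}{k}^{-1}\min_{\overline\Omega}\tilde g > 0$. This produces smooth strictly plurisubharmonic approximants $\tilde u_j \nearrow \tilde u$ in capacity with $F[\tilde u_j] \leq \tilde g$ pointwise, hence $(dd^c\tilde u_j)^n \leq \tilde g\,(dd^c\tilde u_j)^{n-k}\wedge\omega^k$ pointwise and therefore in the pluripotential sense. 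Combining this smooth upper bound with the pluripotential subsolution hypothesis $(dd^c u)^n \geq \tilde g\,(dd^c u)^{n-k}\wedge\omega^k$, and noting that $u$ and $\tilde u_j$ agree on $\partial\Omega$ in the limit, a Bedford--Taylor-type comparison on the open sets $\{\tilde u_j < u\}$ should force these sets to collapse as $j\to\infty$, yielding $u\leq \tilde u$.

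The principal obstacle I anticipate is this last pluripotential comparison. For the pure Monge--Amp\`ere operator the classical Bedford--Taylor comparison principle is sharp, but in the Hessian-quotient setting the natural mixed Monge--Amp\`ere estimates are subtler, and the standard inequality for $\int(dd^c u)^{n-k}\wedge\omega^k$ points in the wrong direction. Here the nondegeneracy $(dd^c u)^k \geq a\,\omega^k$ becomes critical: it renders the mixed positive measures $(dd^c u)^{n-k}\wedge\omega^k$ and $(dd^c\tilde u_j)^{n-k}\wedge\omega^k$ strictly positive and quantitatively comparable, which should allow a quantitative Bedford--Taylor estimate to close the argument. An alternative route I would consider is to sharpen the uniform bound $v_j + M/j \geq u$ from the proof of Proposition \ref{vissup.the} to $v_j + o(1/j) \geq u$ by exploiting the pluripotential subsolution hypothesis, so that the modified right-hand sides $e^{j(v_j-u)}\psi(z,u)$ converge uniformly to $\psi(z,u)$, and then invoke the standard stability of viscosity solutions under uniform convergence.
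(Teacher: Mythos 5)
Your proposal takes a genuinely different route, but the route has a gap at precisely the step you yourself flag as the ``principal obstacle'', and that gap is not a technicality — it is essentially as hard as the theorem itself. Your plan reduces the problem to proving $u \leq \tilde u$, where $\tilde u$ is the viscosity solution of $F[\cdot] = \tilde g := \psi(\cdot, u(\cdot))$ with boundary data $u|_{\partial\Omega}$. To get this you approximate $\tilde u$ by smooth $\tilde u_j \nearrow \tilde u$ with $(dd^c \tilde u_j)^n \leq \tilde g\,(dd^c\tilde u_j)^{n-k}\wedge\omega^k$, and you want to compare against $(dd^c u)^n \geq \tilde g\,(dd^c u)^{n-k}\wedge\omega^k$ via a Bedford--Taylor argument on $\{\tilde u_j < u\}$. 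But the two inequalities involve the mixed measures $(dd^c u)^{n-k}\wedge\omega^k$ and $(dd^c\tilde u_j)^{n-k}\wedge\omega^k$, which are different measures that are not mutually comparable, so you never obtain the inequality $(dd^c u)^n \geq (dd^c\tilde u_j)^n$ on which the Bedford--Taylor domination principle is based. The nondegeneracy $(dd^c u)^k\geq a\omega^k$ makes $(dd^c u)^{n-k}\wedge\omega^k$ quantitatively bounded below, but it does not relate $(dd^c u)^{n-k}\wedge\omega^k$ to $(dd^c\tilde u_j)^{n-k}\wedge\omega^k$. A pluripotential comparison principle for the Hessian quotient operator is precisely what is missing from the theory, and obtaining $u\le\tilde u$ from the pluripotential subsolution hypothesis alone is essentially what the theorem asserts; so this step is circular in spirit.

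The paper's proof sidesteps the auxiliary Dirichlet problem entirely and never compares two different potentials. It applies Proposition \ref{vissup.the} to $u$ itself (legitimate, since $u$ is a viscosity supersolution by \eqref{vissup.ppvis.the.eq}), producing smooth strictly psh $u_j\nearrow u$ in capacity on a relatively compact $V$ with $b\le F[u_j]\le\psi(z,u)$ pointwise. Writing $f_j:=F[u_j]$, one has $(\psi-f_j)(dd^c u_j)^{n-k}\wedge\omega^k = \psi(dd^c u_j)^{n-k}\wedge\omega^k - (dd^c u_j)^n \rightharpoonup \psi(dd^c u)^{n-k}\wedge\omega^k - (dd^c u)^n \leq 0$ by continuity of mixed Monge--Amp\`ere operators under convergence in capacity and the pluripotential hypothesis \eqref{ppsub.ppvis.the.eq}; since the left side is nonnegative it tends weakly to $0$. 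Here the nondegeneracy is used to get $(dd^c u_j)^{n-k}\wedge\omega^k\geq C\omega^n$ (via Corollary \ref{relation.Cor1} and $f_j\geq b$), which forces $f_j\to\psi(z,u)$ in Lebesgue measure. One then passes to the limit in the pointwise linearized inequalities of Theorem \ref{vissub.the} and converts back to the viscosity subsolution property. This single-sequence strategy, where the pluripotential hypothesis is spent on forcing the data $f_j$ of the approximants to converge to $\psi$ rather than on comparing two potentials, is the crucial device your proposal is missing. Your alternative idea of sharpening $v_j + M/j \geq u$ to $v_j+o(1/j)\geq u$ runs into the same difficulty: that estimate is again a comparison between $u$ and a viscosity solution, and there is no obvious mechanism to improve it using \eqref{ppsub.ppvis.the.eq}.
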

\begin{proof}
	It remains to show that $u$ is a viscosity subsolution of \eqref{conclusion.ppvis.the.eq} in any smooth strictly pseudoconvex domain
	$U\Subset\Omega$.
	
\medskip	
	 Let $V$ be a smooth strictly pseudoconvex domain such that $U\Subset V\Subset\Omega$. By Proposition \ref{vissup.the},
	  there exists an increasing sequence of strictly plurisubharmonic functions $u_j\in  C^{\infty}(\bar{V})$, such that $u_j$ converges 
	 in capacity to $u$ as $j\rightarrow\infty$, and
	 $$
	 b\leq\dfrac{(dd^cu_j)^n}{(dd^cu_j)^{n-k}\wedge\omega^k}\leq \psi (z,u),$$
	 pointwise in $V$, where $b>0$. By Corollary \ref{relation.Cor1}, we have $(dd^c u_j)^k\geq \binom{n}{k}^{-1}b \omega^k$.
	  Then, there exists $C>0$ such that
	 \begin{center}
	 	$(dd^cu_j)^{n-k}\wedge\omega^k\geq\dfrac{1}{\psi (z,u)}(dd^cu_j)^n\geq C\omega^n.$
	 \end{center}
	  Denote
	 \begin{center}
	 	$f_j(z):=\dfrac{(dd^cu_j)^n}{(dd^cu_j)^{n-k}\wedge\omega^k}.$
	 \end{center}
 Then $f_j(z)\leq \psi(z,u)$ for any $z\in V$, and $(\psi-f_j) (dd^cu_j)^{n-k}\wedge\omega^k\geq C(\psi -f_j)\omega
^n$ converges weakly to $0$.
 Hence $f_j$ converges in Lebesgue measure to $\psi$ in $V$ as $j\rightarrow\infty$.\\
 Now, by Theorem \ref{vissub.the}, we have
 \begin{center}
 $(dd^c u_j)\wedge\omega_{A_1}\wedge...\wedge\omega_{A_{k-1}}\wedge\omega_{B^2}^{n-k}\geq (f_j)^{1/k}\omega^n,$
 \end{center}
 pointwise in $V$ for any $B\in\mathcal{B}(Id, n-k)$ and $A_1,..., A_{k-1}\in\mathcal{B}(B^2, k)$. 
 Letting $j\rightarrow\infty$, we get
 \begin{center}
 	$(dd^c u)\wedge\omega_{A_1}\wedge...\wedge\omega_{A_{k-1}}\wedge\omega_{B^2}^{n-k}\geq \psi^{1/k}\omega^n,$
 \end{center}
 in the sense of Radon measures. It follows from \cite{Lu} that
$$(dd^c u)^k\wedge\omega_{B^2}^{n-k}\geq \psi^{1/k} \omega^n,
$$
 in the viscosity sense. Using Theorem \ref{vissub.the}, we get that $u$ is a viscosity subsolution of \eqref{conclusion.ppvis.the.eq}
 in $U$. The proof is completed.
\end{proof}

\section{Dirichlet problem for the Lagrangian phase operator}
\label{lagrangian_section}
 In this section, we prove the existence of unique viscosity solution to the Dirichlet problem for the Lagrangian phase operator. The existence and uniqueness of the smooth version was obtained recently by Collins-Picard-Wu \cite{CPW}. Let $\Omega\subset\C^n$ be a bounded domain. Consider the Dirichlet problem
\[
   \left\{ \begin{aligned}
   F[u]: = \sum_{i=1}^n\arctan \lambda_i&=h(z),\, \text{ on } \Omega \\
     u&=\varphi  \, \text{ on }\partial \Omega.
   \end{aligned} 
   \right.  \tag*{$($LA$)$} \label{lagrangian}
\] 
where $\lambda_1,\ldots,\lambda_n$ is the eigenvalues of the complex Hessian $Hu$. We can also write $F[u]=f(\lambda(Hu))$.  We assume that $\varphi \in C^0(\partial \Omega)$ and $h:\bar \Omega\rightarrow [(n-2){\pi\over 2} +\delta, n{\pi\over 2})$ is continuous, for some $\delta>0$.

\medskip  The Lagrangian phase operator  $F$ in \ref{lagrangian} arises in geometry and mathematical physics. We refer to \cite{CPW, HL2,JY,CJY, Y,WY1,WY2} and references therein for the details.  

\medskip
Since $h\geq (n-2)\frac{\pi}{2}$, this case is called the {\it supercritical phase} following \cite{Y,JY,CJY,CPW}. Recall first the following properties (cf. \cite{Y,WY2, CPW});
\begin{lemma}
\label{properties}
Suppose $\lambda_1\geq \lambda_2\geq \ldots\geq \lambda_n$ satisfying $\sum_i \arctan \lambda_i\geq (n-2){\pi\over 2}+\delta$ for some $\delta>0$. Then we have 
\begin{itemize}
\item [(1)]  $\lambda_1\geq\lambda_2\geq \ldots\geq \lambda_{n-1}>0$ and $|\lambda_{n}|\leq \lambda_{n-1}$,

\item [(2)] $\sum_i \lambda_i\geq 0$, and $\lambda_n\geq -C(\delta)$,

 \item [(3)]  $\sum \lambda_i^{-1}\leq -\tan(\delta)$ when $\lambda_n<0$.
 \item [(4)] for any $\sigma\in ((n-2){\pi\over 2} , n{\pi\over 2})$,  the set
 $\Gamma^\sigma:=\{\lambda\in \R^n\, | \, \sum_i \arctan\lambda_i>\sigma \}$
 is a convex set and $\partial \Gamma^\sigma$ is a smooth convex hypersurface.
\end{itemize}
\end{lemma}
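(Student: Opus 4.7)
The plan is to establish the four properties in the order stated. Items (1) and (2) are elementary bookkeeping using the trivial bound $\arctan t<\pi/2$; item (3) requires a short trigonometric manipulation via the duality $\mu_i:=1/\lambda_i$; and item (4) is the classical convexity result of Yuan \cite{Y}, whose principal-curvature computation is the real obstacle.

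For (1), the observation $\sum_{j=1}^{n-2}\arctan\lambda_j<(n-2)\pi/2$ forces
\[\arctan\lambda_{n-1}+\arctan\lambda_n > \delta>0.\]
If $\lambda_{n-1}\le 0$ both summands would be non-positive, so $\lambda_{n-1}>0$, and likewise $\lambda_n\ge -\lambda_{n-1}$, i.e.\ $|\lambda_n|\le\lambda_{n-1}$. For (2), the same chain gives
\[\arctan\lambda_n\ge (n-2)\tfrac{\pi}{2}+\delta-(n-1)\tfrac{\pi}{2}=\delta-\tfrac{\pi}{2},\]
hence $\lambda_n\ge -\cot\delta=:-C(\delta)$, while $\sum_i\lambda_i\ge 0$ follows from $\lambda_1,\ldots,\lambda_{n-2}>0$ together with $\lambda_{n-1}+\lambda_n\ge 0$ from (1).

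For (3), the crucial move is to pass to $\mu_i:=1/\lambda_i$ and exploit the identity $\arctan(x)+\arctan(1/x)=\tfrac{\pi}{2}\operatorname{sgn}(x)$, which yields
\[\sum_i\arctan\mu_i = (n-2)\tfrac{\pi}{2}-\sum_i\arctan\lambda_i\le -\delta.\]
Setting $\beta:=\sum_{i<n}\arctan\mu_i>0$ and $\gamma:=-\arctan\mu_n\in(0,\pi/2)$, the inequality $\gamma\ge\beta+\delta$ forces $\beta+\delta<\pi/2$. A short induction via the tangent addition formula (valid since the partial sums stay below $\pi/2$) gives the superadditivity $\tan\beta\ge\sum_{i<n}\mu_i$, so
\[\sum_i\mu_i \le \tan\beta-\tan\gamma\le\tan\beta-\tan(\beta+\delta)=\frac{-\sin\delta}{\cos\beta\cos(\beta+\delta)}.\]
Since $\cos\beta\cos(\beta+\delta)\le\cos\delta$ on $[0,\pi/2-\delta)$ (the maximum being attained at $\beta=0$), the right-hand side is bounded above by $-\tan\delta$, which proves (3).

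For (4), smoothness is immediate from $\nabla\Theta=(1/(1+\lambda_i^2))\ne 0$ everywhere, and it remains to establish convexity of $\Gamma^\sigma$ for $\sigma\in((n-2)\pi/2,n\pi/2)$. This is the main obstacle, and I would quote the classical theorem of Yuan \cite{Y}: the level hypersurface $\{\Theta=\sigma\}$ has non-negative principal curvatures when $\sigma\ge (n-2)\pi/2$. The proof is a direct curvature computation on $\partial\Gamma^\sigma$ that uses crucially the sign information $\lambda_{n-1}>0$ and $|\lambda_n|\le\lambda_{n-1}$ coming from (1); this is precisely why I ordered the four steps in the way above.
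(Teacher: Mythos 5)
The paper states this lemma without proof, citing Yuan \cite{Y}, Wang--Yuan \cite{WY2}, and Collins--Picard--Wu \cite{CPW}; your proposal supplies an actual argument, so there is no paper proof to compare against line by line. That said, your proofs of (1)--(3) are correct and self-contained, and the logic is sound. For (1): since $\sum_{j\le n-2}\arctan\lambda_j<(n-2)\pi/2$, the tail $\arctan\lambda_{n-1}+\arctan\lambda_n>\delta>0$ indeed forces $\lambda_{n-1}>0$ and, by oddness and monotonicity of $\arctan$, $\lambda_n>-\lambda_{n-1}$. For (2): $\arctan\lambda_n\ge\delta-\pi/2$ gives $\lambda_n\ge\tan(\delta-\pi/2)=-\cot\delta$, an explicit $C(\delta)$, and $\sum\lambda_i\ge0$ follows from (1). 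For (3): passing to $\mu_i=1/\lambda_i$ and using $\arctan x+\arctan(1/x)=\tfrac{\pi}{2}\,\mathrm{sgn}\,x$ is the right move; the tangent superadditivity (valid because at each step $\tan\beta_k\cdot\mu_{k+1}<1$ when $\beta_{k+1}<\pi/2$) and the bound $\cos\beta\cos(\beta+\delta)\le\cos\delta$ on $[0,\pi/2-\delta)$ (via $\cos\beta\cos(\beta+\delta)=\tfrac12[\cos\delta+\cos(2\beta+\delta)]$) are both correct, and equality is achieved in the degenerate limit $\lambda_1,\dots,\lambda_{n-1}\to\infty$, $\lambda_n\to-\cot\delta$, which shows the constant $-\tan\delta$ is sharp. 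For (4), deferring to Yuan's principal-curvature computation is exactly what the cited sources do, and you correctly identify that it relies on the sign structure from (1); one could also note that the level set is the graph of a function of $(\lambda_1,\dots,\lambda_{n-1})$ by the implicit function theorem since $\partial_{\lambda_n}\Theta=1/(1+\lambda_n^2)>0$, which handles smoothness. In short, your proposal is a faithful reconstruction of the arguments the paper only cites.
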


It follows form Lemma \ref{properties} that the function $f$ can be defined on a cone $\Gamma$ satisfying $ \Gamma_n\subset \Gamma \subset \Gamma_1 $.  We also remark that if $h\geq (n-1)\frac{\pi}{2}$, then $F$  is concave while $F$  have concave level sets if $(n-2)\frac{\pi}{2} h\leq (n-1)\frac{\pi}{2}$, but in general $F$ may not be   concave  (cf. \cite{CPW}). Therefore we can not apply Theorem \ref{comprison_principle} directly. Fortunately, we still have a comparison principle for the Lagrangian operator using Lemma \ref{general_comparision}.
\begin{lemma} 
\label{lagrange_comparison}
Let $u,v\in L^\infty(\overline{\Omega})$ be viscosity subsolution and supersolution of equation $F[u]=f(\lambda(Hu))=h$ on $\Omega$.  Then
\begin{equation}\label{comparision_2}
\sup_{\Omega}(u-v)\leq \max_{\partial \Omega} \left\lbrace  (u-v)^*,0\right\rbrace.
\end{equation}
\end{lemma}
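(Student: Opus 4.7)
The plan is to adapt the proof of Theorem \ref{comprison_principle}, using the supercritical phase structure encoded in Lemma \ref{properties} as a substitute for the concavity/homogeneity hypotheses needed in that theorem. Since the conclusion is to be derived from Lemma \ref{general_comparision}, the central task is to perturb $u$ into a function $u_t$ satisfying a strict viscosity subsolution inequality $F[u_t] \geq h + \delta$, while preserving the positive interior maximum of $u_t-v$.

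\textbf{Step 1 (Perturbation).} Fix $R > 0$ with $R^2 > \sup_\Omega |z|^2$, and for $t > 0$ set
$$u_t(z) := u(z) + t(|z|^2 - R^2).$$
Then $u_t \leq u$ in $\Omega$ and $u_t = u$ on the portion of $\partial \Omega$ with $|z|=R$. By the contradiction hypothesis, $u-v$ has a positive interior maximum exceeding its boundary values; the same holds for $u_t-v$ when $t$ is sufficiently small. Given any upper differential test $q_t$ for $u_t$ at $z_0$, the function $q := q_t - t(|z|^2 - R^2)$ is an upper test for $u$ at $z_0$, hence $F[q](z_0) = \sum_i \arctan \lambda_i(Hq(z_0)) \geq h(z_0)$. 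Since $h \geq (n-2)\tfrac{\pi}{2} + \delta_0$ for some $\delta_0 > 0$, Lemma \ref{properties}(2) yields the uniform lower bound $\lambda_i(Hq(z_0)) \geq -C(\delta_0)$ for all $i$, which is the crucial a priori control provided by supercriticality.

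\textbf{Step 2 (Strict inequality on the contact set).} Since $Hq_t = Hq + 2tI$, strict monotonicity of $\arctan$ gives
$$F[q_t](z_0) - F[q](z_0) = \sum_{i=1}^n\bigl[\arctan(\lambda_i+2t)-\arctan \lambda_i\bigr] > 0.$$
Because $\arctan' = 1/(1+s^2)$ decays at infinity, this increment is not uniform in $q$ alone; to obtain a uniform gap $\delta > 0$ one exploits the structure of the proof of Lemma \ref{general_comparision}: after Jensen regularization of $u_t$ and $v$ and the ABP step, the viscosity inequalities are only tested at points of the set $E_\lambda$, on which $Hu_t^\varepsilon \leq Hv_\varepsilon$ almost everywhere. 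Combined with the semi-convexity of $u_t^\varepsilon$, semi-concavity of $v_\varepsilon$, and the uniform lower bound from Step 1, the eigenvalues involved are confined to a bounded range (depending on $\varepsilon$), yielding a quantitative gap
$$F[q_t] - F[q] \geq \frac{2tn}{1+K(\varepsilon)^2} =: \delta(t,\varepsilon) > 0.$$

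\textbf{Step 3 (Running Lemma \ref{general_comparision}).} With $u_t$ now playing the role of a strict subsolution in the sense $F[u_t] \geq h + \delta$ (on the contact set), the Jensen-ABP contradiction of Lemma \ref{general_comparision} applies and gives $\sup_\Omega(u_t - v) \leq \max_{\partial\Omega}(u_t - v)^{+}$. Letting $t \to 0$ produces the claimed inequality \eqref{comparision_2}.

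\textbf{Main obstacle.} The principal difficulty is decoupling the strict gap from the $\varepsilon$ in the Jensen regularization: since $\arctan$ saturates at $\pi/2$, the elementary perturbation of Step 1 is not uniformly strict over all admissible test functions. The rescue comes from two uniform ingredients supplied by the problem's geometry: the uniform lower eigenvalue bound $\lambda_i \geq -C(\delta_0)$ from Lemma \ref{properties}(2), and the uniform buffer $\max_{\bar\Omega} h < n\pi/2$. Together with the semi-concavity estimate $|Hv_\varepsilon|\leq 2C_0/\varepsilon^2$ used in Lemma \ref{general_comparision}, these allow one to balance the choices of $t$, $\varepsilon$ and $\lambda$ so as to close the contradiction for merely continuous $h$.
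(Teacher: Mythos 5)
The student's proposal has the right skeleton (perturb $u$ by $t|z|^2$, reduce to Lemma \ref{general_comparision}) and correctly identifies the two key geometric ingredients: the uniform lower eigenvalue bound $\lambda_i \geq -C(\delta_0)$ from Lemma \ref{properties}(2), and the upper buffer $\max_{\bar\Omega}h < n\pi/2$. However, Step 2 and the ``Main obstacle'' paragraph do not close the argument, and the reason is substantive.

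You propose to extract the strict gap only \emph{inside} the Jensen--ABP machinery of Lemma \ref{general_comparision}, where semi-concavity of $v_\varepsilon$ supplies an upper eigenvalue bound $K(\varepsilon)\sim C_0/\varepsilon^2$, and then quote a gap of size $\delta(t,\varepsilon)=\tfrac{2tn}{1+K(\varepsilon)^2}\sim t\varepsilon^4$. But Lemma \ref{general_comparision} closes its contradiction by producing $F(\cdot)\geq \delta - \alpha_z(\varepsilon)-\alpha_p(\lambda)>0$ after taking $\lambda$ then $\varepsilon$ small, which requires a fixed $\delta$ \emph{independent of} $\varepsilon$. If $\delta=\delta(t,\varepsilon)\sim t\varepsilon^4$, then for any fixed $t$ and for merely continuous $h$ (so $\alpha_z$ can decay arbitrarily slowly), the inequality $\delta(t,\varepsilon)>\alpha_z(\varepsilon)$ fails as $\varepsilon\to 0$, and the balancing advertised in your final paragraph cannot be carried out. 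In short, feeding an $\varepsilon$-dependent gap back into a lemma whose proof itself consumes the smallness of $\varepsilon$ is circular here.

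The paper avoids this entirely via a dichotomy that gives a \emph{uniform} gap \emph{before} invoking Lemma \ref{general_comparision}. Writing $\max_{\bar\Omega}h = n\tfrac{\pi}{2}-\epsilon$ and using $u_\tau = u+\tau|z|^2$ with $\tau\le\epsilon/2$: either $F[q](z_0)\geq n\tfrac{\pi}{2}-\tfrac{\epsilon}{2}$, in which case $F[q_\tau](z_0)\geq F[q](z_0)\geq h(z_0)+\tfrac{\epsilon}{2}$ immediately from the buffer; or $F[q](z_0)<n\tfrac{\pi}{2}-\tfrac{\epsilon}{2}$, which forces $\arctan\lambda_n(z_0)\leq \tfrac{\pi}{2}-\tfrac{\epsilon}{2n}$ and hence an \emph{$\varepsilon$-independent} upper bound $\lambda_n(z_0)\leq C(\epsilon)$; combined with $\lambda_n\geq -C(\delta)$ and the mean value theorem for $\arctan$, this gives $F[q_\tau]-F[q]\geq C(\delta,\epsilon,\tau)>0$. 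Both branches yield a fixed $\delta>0$, so $F[u_\tau]\geq h+\delta$ holds in the viscosity sense as a hypothesis for Lemma \ref{general_comparision}. This dichotomy is precisely the missing idea: it is what upgrades the (valid but non-uniform) monotonicity observation in your Step 2 into the uniform strict subsolution inequality that the comparison machinery actually requires.
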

\begin{proof}
We first define $\epsilon>0$ by $\max_{\bar{\Omega}}h=n\frac{\pi}{2}-\epsilon$.  Now for any $0<\tau\leq \epsilon/2$, set
$u_\tau=u+\tau |z|^2.$   Let $q_\tau$ be any upper test for $u_\tau$ at any point $z_0\in\Omega$, then $q=q_\tau-\tau |z|^2 $ is also an upper test for $u$ at $z_0$. By the definition we have
$$F[q](z_0)=\sum_{i=1}^n \arctan \lambda_i (z_0)\geq h(z_0),$$
where $\lambda(z_0)=\lambda (Hq(z_0))$.
We also have
\begin{equation}
F[q_\tau](z_0)=\sum_{i=1}^n \arctan (\lambda_i(z_0) +\tau ).
\end{equation}
Next,  if $F[q](z_0) \geq n {\pi\over 2}-\frac{\epsilon}{2}$, then $ F[q](z_0) \geq h(z_0)+\frac{\epsilon}{2}$ hence 
\begin{equation}
\label{bound_upper_test}
F[q_\tau](z_0) \geq h(z_0)+\frac{\epsilon}{2}.
\end{equation}
Conversely, if $F[q](z_0)<n{\pi\over 2}-\frac{\epsilon}{2} $, this implies that $\arctan(\lambda_n(z_0))\leq  \frac{\pi}{2}-\frac{\epsilon}{2n}$.  Combining with Lemma  \ref{properties} (2), we get $-C(\delta)\leq \lambda_n(z_0)\leq C(\epsilon)$. Using the Mean value theorem, there exists $\hat \lambda_n \in (\lambda_n(z_0),\lambda_n(z_0)+\tau)$ such that
$$\arctan(\lambda_n(z_0)+\tau)- \arctan \lambda_n(z_0) = \frac{1}{1+ \hat \lambda_n^2} \tau\geq  C(\delta,\epsilon,\tau)>0.$$ 
It follows that 
\begin{equation}
F[q_\tau](z_0)\geq  F[q](z_0)+C(\delta,\epsilon,\tau) \geq h(z_0)+ C(\delta,\epsilon,\tau).
\end{equation}
Combing with (\ref{bound_upper_test}) yields
$$F[q_\tau](z_0)\geq    h(z_0)+ C,$$
where $C>0$ depending only on $\delta,\epsilon,\tau$. We thus infer that $u_\tau$ satisfies $ F[u_\tau]\geq h(z)+C$ in the viscosity sense. Therefore applying Lemma \ref{general_comparision} to $u_\tau$ and $ v$, then let $\tau \rightarrow 0$, we obtain the desired inequality.  
\end{proof}

\begin{theorem}
Let $\Omega$ is a bounded  $C^2$ domain. Let  $\underline u$ is an bounded upper semi-continuous function on $ \Omega $ satisfying $F[\underline u]\geq h(z)$ in $\Omega$ in the viscosity sense and $\underline{u} =\varphi$ on $\partial \Omega$.   Then the Dirichlet problem \ref{lagrangian} admits a unique viscosity solution $u\in C^0(\Omega)$.

\begin{proof}
 It suffices to find a viscosity supersolution $\bar u$ for the equation $F[u]=h(z)$, satisfying $\bar u=\varphi$ on $\partial \Omega$.  The $C^2$-boundary implies the existence of a harmonic function  $\phi$ on $\Omega$ for arbitrary given continuous boundary data $\f$. Since  $\sum_i \lambda_i(H\phi)=0$,  it follows from Lemma \ref{properties} that we have $F[\phi]<(n-2)\frac{\pi}{2} +\delta\leq h $, hence $\phi$ is a supersolution for \ref{lagrangian}.  The rest of the proof is similar to the one of Theorem \ref{existence_general}, by using Lemma \ref{lagrange_comparison}.
\end{proof}

\end{theorem}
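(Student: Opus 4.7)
The strategy mirrors the proof of Theorem \ref{existence_general}: Perron's method sandwiched between a viscosity subsolution and a viscosity supersolution sharing the boundary data $\varphi$, combined with the Lagrangian-specific comparison principle Lemma \ref{lagrange_comparison}. The subsolution $\underline u$ is supplied by hypothesis, so the real task is to produce a supersolution $\overline u\in C(\overline\Omega)$ with $\overline u|_{\partial\Omega}=\varphi$, and then to run the Perron envelope argument.

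For the supersolution, I would simply take $\overline u:=\phi$, the harmonic extension of $\varphi$ to $\Omega$. Such a $\phi$ exists in $C^\infty(\Omega)\cap C(\overline\Omega)$ by classical potential theory, since $\partial\Omega\in C^2$ ensures every boundary point is regular for the Laplacian. The key point is to verify $\phi$ is a viscosity supersolution of \ref{lagrangian}. At any $z\in\Omega$, the eigenvalues $\lambda(H\phi)(z)$ sum to zero; by Lemma \ref{properties}(1)-(2), any $\lambda\in\mathbb{R}^n$ with $\sum_i\arctan\lambda_i\ge (n-2)\pi/2+\delta$ satisfies $\lambda_1\ge\ldots\ge\lambda_{n-1}>0$ and $|\lambda_n|\le\lambda_{n-1}$, hence $\sum_i\lambda_i>0$. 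Contrapositively, $\sum_i\lambda_i=0$ forces $F[\phi](z)<(n-2)\pi/2+\delta\le h(z)$. Since $F$ is monotone under positive semi-definite perturbations, for every $C^2$ lower test $q$ of $\phi$ at $z_0$ we have $Hq(z_0)\le H\phi(z_0)$ and
\[
\inf_{N\ge 0}F\bigl(N+Hq(z_0)\bigr)=F\bigl(Hq(z_0)\bigr)\le F[\phi](z_0)<h(z_0),
\]
so $\phi$ is a supersolution in the sense of Definition \ref{def}. Applying Lemma \ref{lagrange_comparison} to $\underline u$ and $\phi$ (which agree on $\partial\Omega$) yields $\underline u\le\phi$ throughout $\overline\Omega$.

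With both barriers in place, I set
\[
u(z):=\sup\Bigl\{w\in \mathrm{USC}(\Omega)\cap L^\infty(\Omega):\ F[w]\ge h\text{ in the viscosity sense},\ \underline u\le w\le\phi\Bigr\},
\]
and argue exactly as in the proof of Lemma \ref{Perron}: $u^*$ is a viscosity subsolution by the usual stability property of upper envelopes, and $u_*$ is a viscosity supersolution by the quadratic-bump trick — if some lower test $q$ at $z_0$ produced $F[q](z_0)>h(z_0)$, then $\tilde q:=q+b-a|z-z_0|^2$ would remain a viscosity subsolution on a small ball, and gluing $\max(u^*,\tilde q)$ on that ball would strictly enlarge $u$, contradicting maximality. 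Since $\underline u\le u\le \phi$ and the two barriers attain $\varphi$ continuously on $\partial\Omega$, we get $u^*=u_*=\varphi$ on $\partial\Omega$. The comparison Lemma \ref{lagrange_comparison} then yields $u^*\le u_*$ on $\overline\Omega$, while the reverse inequality is automatic. Hence $u=u^*=u_*\in C(\overline\Omega)$ is the desired viscosity solution, and uniqueness is another application of Lemma \ref{lagrange_comparison}. \emph{The main obstacle} is precisely the strict inequality $F[\phi]<(n-2)\pi/2+\delta$ for harmonic $\phi$, which relies on the supercritical hypothesis $h\ge(n-2)\pi/2+\delta$: Lemma \ref{properties} is exactly the tool that rules out the critical case via the constraint $\sum_i\lambda_i=0$. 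A secondary point is that $F$ is not globally concave in the intermediate range $(n-2)\pi/2<h<(n-1)\pi/2$, which prevents direct use of the general Theorem \ref{comprison_principle}; the Lagrangian-tailored Lemma \ref{lagrange_comparison} circumvents this obstruction and drives both the construction of $\overline u$ and the final uniqueness step.
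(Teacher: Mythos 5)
Your proposal is correct and follows essentially the same route as the paper: harmonic extension $\phi$ of $\varphi$ as supersolution (using Lemma \ref{properties} to show $F[\phi]<(n-2)\pi/2+\delta\le h$), Perron's envelope sandwiched between $\underline u$ and $\phi$, and the Lagrangian-specific Lemma \ref{lagrange_comparison} replacing Theorem \ref{comprison_principle} for both the bump argument and uniqueness. Your write-up merely spells out the details that the paper leaves to ``the rest of the proof is similar to the one of Theorem \ref{existence_general}.''
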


Address:

S\l awomir Dinew: Institute of Mathematics, Jagiellonian University, ul \L ojasiewicza 6, 30-348 Krak\'ow, Poland\\
{\tt e-mail: slawomir.dinew@im.uj.edu.pl}\\

Hoang-Son Do: Institute of Mathematics, Vietnam Academy of Science and Technology, 18 Hoang Quoc Viet, Cau Giay, 100000 Hanoi, Vietnam\\
{\tt email:  hoangson.do.vn@gmail.com} \\

Tat-Dat T\^o: Institut Math\'ematiques de Toulouse, Universit\'e Paul Sabatier, 31062 Toulouse cedex 09, France.\\
{\tt e-mail: tat-dat.to@math.univ-toulouse.fr}
\end{document}